\newtheorem{theorem}{Theorem}[section]
\newtheorem{maintheorem}{Theorem}
\newtheorem{secondtheorem}{Theorem}
\newtheorem{maincorollary}{Corollary}
\newtheorem{secondcorollary}{Corollary}
\newtheorem{proposition}[theorem]{Proposition}
\newtheorem{corollary}[theorem]{Corollary}
\newtheorem{lemma}[theorem]{Lemma}
\theoremstyle{definition}
\newtheorem{definition}[theorem]{Definition}
\newtheorem{example}[theorem]{Example}
\newtheorem{remark}[theorem]{Remark}
\numberwithin{equation}{section}
\newcommand{\sing}{\textsf{Sing}}
\newcommand{\F}{\mathcal{F}}
\newcommand{\C}{\mathbb{C}}
\newcommand{\bb}{\hbox{\rm BB}}
\newcommand{\cs}{\text{CS}}
\newcommand{\co}{\mathbb{C}}
\newcommand{\qe}{\mathbb{Q}}
\newcommand{\ze}{\mathbb{Z}}
\newcommand{\cl}[1]{\mathcal{#1}}
\newcommand{\cpt}[1]{\mathbb{C}P^{2}}
\newcommand{\D}{\mathcal{D}}
\newcommand{\B}{\mathcal{B}}
\newcommand{\Ftilde}{\tilde{\mathcal{F}}}
\newcommand{\sep}{\mbox{\rm Sep}}
\newcommand{\tang}{\textsf{tang}}
\newcommand{\iso}{\mbox{Iso}}
\newcommand{\dic}{\mbox{Dic}}
\newcommand{\divv}{\mbox{\rm Div}}
\newcommand{\var}{\mbox{\rm Var}}
\newcommand{\ord}{\textsf{ord}}
\newcommand{\val}{\text{Val}}
\begin{document}
\title[On Milnor and Tjurina numbers of foliations]{On Milnor and Tjurina numbers of foliations}
\date{\today}
\author[A. Fern\'andez-P\'erez]{Arturo Fern\'andez-P\'erez}
\address[Arturo Fern\'{a}ndez P\'erez] {Department of Mathematics. Federal University of Minas Gerais. Av. Ant\^onio Carlos, 6627 
CEP 31270-901\\
Pampulha, Belo Horizonte, Brazil\\
ORCID: 0000-0002-5827-8828}
\email{fernandez@ufmg.br}

\author{Evelia R. Garc\'{i}a Barroso}
\address[Evelia R. Garc\'{i}a Barroso]{Dpto. Matem\'{a}ticas, Estad\'{\i}stica e Investigaci\'on Operativa\\
IMAULL\\
Universidad de La Laguna. Apartado de Correos 456. 38200 La Laguna, Tenerife, Spain\\
ORCID: 0000-0001-7575-2619}
\email{ergarcia@ull.es}

\author{Nancy Saravia-Molina}
\address[Nancy Saravia-Molina]{Dpto. Ciencias - Secci\'{o}n Matem\'{a}ticas, Pontificia Universidad Cat\'{o}lica del Per\'{u}, Av. Universitaria 1801,
San Miguel, Lima 32, Peru\\
ORCID: 0000-0002-2819-8835}
\email{nsaraviam@pucp.edu.pe}

\subjclass[2010]{32S65 (primary), 37F75 (secundary)}
\keywords{Tjurina number, Milnor number, $\chi$-number, Dicritical foliation.}

\begin{abstract}

We study the relationship between the Milnor and Tjurina numbers of a singular foliation  $\F$, in the complex plane, with respect to a balanced divisor of separatrices $\mathcal B$ for  $\F$. For that, we associate with $\F$ a new number called the $\chi$-number and we prove that it is a $C^{1}$ invariant for holomorphic foliations. 
We compute  the polar excess number of  $\F$ with respect to a balanced divisor of separatrices $\B$ for $\F$, via the Milnor  number of the foliation,   the multiplicity of some hamiltonian foliations along the separatrices in the support of $\B$ and the $\chi$-number of $\F$.
On the other hand, we generalize, in the plane case and the formal context, the well-known result of G\'omez-Mont given in the holomorphic context, which establishes the equality between the GSV-index of the foliation and the difference  between the Tjurina number of the foliation and the Tjurina number of a set of separatrices of $\F$.  Finally, we state numerical relationships between some classic indices, as Baum-Bott, Camacho-Sad, and variational indices  of a singular foliation  and its  Milnor and Tjurina numbers; and we obtain a bound for the sum of Milnor numbers of the local separatrices of a holomorphic foliation on the complex projective plane.

\end{abstract}
\maketitle

\section{Introduction}

The Milnor and Tjurina numbers are classical invariants in the theory of complex analytic hypersurfaces  with isolated singularity.   The Milnor number of  a holomorphic function germ $f:(\mathbb C^{n}, 0)\longrightarrow (\mathbb C, 0)$ with an isolated singularity is exactly the rank of
the middle homology group of the Milnor fiber of $f$, equal to the number of spheres in its bouquet decomposition.
The notion of the Milnor number for hypersurfaces was introduced  in \cite[Section 7]{Milnor}. The Tjurina number is the dimension of the base space of a semi-universal deformation of the hypersurface. Semi-universal deformations were studied in \cite{Tjurina} and as far as our knowledge reaches, the name Tjurina number appears for the first time in \cite{Greuel}.
The Milnor number  is a topological invariant and the Tjurina number  is an analytic invariant of the singularity. There is an abundant and varied bibliography on the Milnor number of singular hypersurfaces for the classical concept, as well as for the more recent relative concept, known as Bruce-Roberts Milnor number (see for example the recent papers \cite{Bivia} and \cite{Barbosa}). Moreover, the Milnor number is related to polar multiplicities; see, for instance, \cite{Carvalho} and the references therein.

 The Tjurina number has not been studied that well, perhaps because it is an analytical invariant, but in recent years new studies have been published (see for example \cite{Alberichetals}, \cite{Genzmer-Hernandes}, \cite{Wang} and \cite{Almiron}). In the context of singular foliations, the notion of Milnor number appears for the first time with that name in \cite{CLS}, although this notion is found in previous works such as that of  Van den Essen \cite{Van}, where a new proof of Seidenberg's theorem is given. As in the case of hypersurfaces, the Milnor number of a one-dimensional holomorphic foliation is a topological invariant and there is a varied bibliography on it. However, the concept of the Tjurina number of a foliation has been less studied and according to our knowledge, always related to the G\'omez-Mont-Seade-Verjovsky index, after \cite{Gomez}. We emphasize that G\'omez-Mont did not use the terminology of the Tjurina number of a foliation, such a name appears for the first time in \cite[Page 159]{Cano}. In \cite[Corollary 2.7]{Licanic} there is a bound for the Tjurina number of an $\F$-invariant curve $C$ as a function of the Tjurina number of $\F$ with respect to $C$. Some verifications on the relationship between the Milnor  and Tjurina numbers of non-dicritical foliations and their total union of separatrices are collected  in \cite{FS-GB-SM}.

In this work we deepen into the study of the Tjurina number of a singular foliation along 
a reduced curve of separatrices and its relationship with the Milnor number and other invariants and indices associated with the foliation such as the polar excess number, Baum-Bott, Camacho-Sad, and variational indices. Taking into account that a singular foliation could admit infinitely many separatrices -- dicritical foliation -- the Tjurina number of a foliation will be associated with a balanced divisor of separatrices. The notion of balanced divisor of separatrices for a foliation was introduced by Genzmer in \cite{Genzmer} and we recall it in Definition 2.1. This is a geometric object formed by a finite set of separatrices, choosing all isolated separatrices and some separatrices from the ones associated to dicritical components, with weights, possibly negative (those that correspond to poles). In the non-dicritical case, this notion coincides with the total union of separatrices of the foliation. A balanced divisor of separatrices provides a control of the algebraic multiplicity  of the foliation and of its separatrices (see Proposition \ref{prop:Equa-Ba}). In Foliation Theory it is not easy to determine whether an invariant associated with a foliation is topological (remember that, for example, it is not yet known if the algebraic multiplicity is). However,  it has been shown that some indices are analytical invariants, such as the Baum-Bott index (see \cite[Remark 3.2] {Cerveau-Lins Neto}), Camacho-Sad and Variational indices (a good exercise for beginners in the world of foliations). We hope that this article contributes for understanding the dicritical foliations and the invariants associated with them.

\par The paper is organized as follows. In Section \ref{sec:basic tools}, we recall some preliminary notions that are necessary in the paper. 
The first motivation for this work was understanding the relationship between the Milnor and Tjurina numbers of a singular foliation $\F$ and a balanced divisor of separatrices for  $\F$. It is for this purpose that in Section \ref{sect:chi number} we associate a new number $\chi_p(\F)$ to any  (dicritical or non dicritical) singular foliation $\F$ at $(\C^2,p)$. This number is defined in function of the algebraic multiplicities and excess tangency indices of the strict transforms of $\F$ at infinitely near points of $p$. Hence $\chi_p(\F)$ is a $C^{1}$ invariant for holomorphic foliations. We study the properties of the $\chi$-number in Proposition \ref{prop_chi}.
In particular, we prove that it is a nonnegative integer number and any foliation of algebraic multiplicity bigger than 1 is of the second type, a concept that we introduce below, if and only if the $\chi$-number equals zero.

Section \ref{sect:polar} is devoted to the indices associated with a foliation making use of a generic polar curve of it, as the polar intersection and the polar excess. Proposition \ref{prop-B} provides a formula to compute the polar excess number of a foliation with respect to the zero divisor of a reduced balanced divisor of separatrices, and as a consequence, we obtain a  characterization of generalized curve foliations, which generalizes \cite[Proposition 2]{Cano} to the dicritical context. In Proposition \ref{Milnor and chi}, we establish the relationship between the Milnor number of a foliation, the multiplicity of the foliation along the separatrices (of a balanced divisor) and the $\chi$-number of the foliation, generalizing  \cite[Corollary 2]{Cano} to dicritical foliations. As a consequence we give a new proof, using foliations, of the well-known relationship between the Milnor number of a reduced plane curve and the Milnor numbers of its irreducible components (see Proposition \ref{Milnor formula}).
Theorem \ref{Th:Delta-chi} is the main result in this section and one of the main results in this paper. In Theorem \ref{Th:Delta-chi} we compute the polar excess number $\Delta_p(\F,\B)$ of a singular foliation $\F$, with respect to a balanced divisor of separatrices $\B$, via the Milnor  number of the foliation, $\mu_p(\F)$,  the multiplicity of some hamiltonian foliations along the separatrices in the support of $\B$, $\mu_p(d{F}_B,B)$, and the $\chi$-number of $\F$. More precisely

\begin{maintheorem}\label{Th:Delta-chi}
Let $\F$ be a singular foliation at $(\mathbb C^2,p)$ and let $\B=\sum_{B}a_B B$ be a balanced divisor of separatrices for $\F$. Then
\[
\Delta_p(\F,\B)=\mu_p(\F)-\sum_{B}a_B\mu_p(d{F}_B,B)+ \deg(\B)-1-\chi_p(\F),
\]
where for each separatrix $B$, ${F}_B$ is a balanced divisor of separatrices for $\F$ adapted to $B$. Moreover, if $\F$ is  a foliation of the second type, then 
\[
\Delta_p(\F,\B)=\mu_p(\F)-\sum_{B}a_B\mu_p(d{F}_B,B)+ \deg(\B)-1.
\]
\end{maintheorem}

As a consequence, in Corollary \ref{char CG}, we give a new characterization of generalized curve foliations in the non-dicritical case.  

In Section \ref{sect:GSV},  we study the G\'omez-Mont-Seade-Verjovsky index (GSV-index). In particular, 
in Corollary \ref{GSV-zero divisor}, we compute the GSV-index of a foliation $\F$ with respect to the zero divisor of a reduced balanced divisor of separatrices for $\F$.
In Proposition \ref{gsv=delta}, we generalize, to the dicritical case, \cite[Proposition 4]{Cano} which establishes the equality between the GSV-index of a foliation $\F$ (containing perhaps purely formal branches) with respect to an $\F$-invariant curve $C:f(x,y)=0$ and  the intersection numbers of $C$ with generic polar curves of $\F$ and of $df$.

We finish this section  establishing, in Proposition \ref{prop:gsv and multiplicity}, a relationship between the GSV-index and the multiplicity of a foliation along a fixed separatrix. 

In Section \ref{sect:Tjurina}, we introduce the notion of Tjurina number of a singular foliation $\F$ along a reduced curve of separatrices $C$, denoted by $\tau_{p}(\F,C)$.   G\'omez-Mont proved, for a singular foliation with a set of convergent separatrices $C$, that the  difference between the Tjurina number of the foliation and the Tjurina number of $C$, $\tau_{p}(C)$,  equals to the GSV-index (see \cite[Theorem 1]{Gomez}). 
In Proposition \ref{obs_1}, we show that this result also holds, in the formal context, for the Tjurina number of a singular foliation along a reduced curve of separatrices. As a consequence we get the next corollary for non-dicritical foliations:

\begin{maincorollary}
Let $\F$ be a singular foliation at $(\mathbb C^2,p)$. Assume that $\F$ is non-dicritical and $C$ is the total union of separatrices of $\F$. Then
 \[
 \mu_{p}(\F)-\tau_{p}(\F,C)=\mu_{p}(C)-\tau_{p}(C)+\chi_{p}(\F).
 \]
Moreover, if $\F$ is of second type then $\mu_{p}(\F)-\tau_{p}(\F,C)=\mu_{p}(C)-\tau_{p}(C)$.
\end{maincorollary}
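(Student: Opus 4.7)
The plan is to combine Proposition \ref{obs_1}, which extends G\'omez-Mont's formula to the formal dicritical setting, with Theorem \ref{Th:Delta-chi} and the identities relating polar excess and the GSV-index from Section \ref{sect:GSV}. Because $\F$ is non-dicritical, the total union of separatrices $C$ is itself a balanced divisor $\B=\sum_B B$ whose branches all have weight one; in particular $\B$ is reduced and $\deg(\B)=r$, where $r$ is the number of branches of $C$.

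The first step is to invoke Proposition \ref{obs_1} with the reduced curve of separatrices $C$, which produces
\[
\tau_p(\F,C)-\tau_p(C)=\gsv_p(\F,C).
\]
Substituting this on the left-hand side, the corollary reduces to the single identity
\[
\mu_p(\F)-\mu_p(C)=\gsv_p(\F,C)+\chi_p(\F). \qquad (\star)
\]

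To establish $(\star)$, I would specialize Theorem \ref{Th:Delta-chi} to $\B=C$, obtaining
\[
\Delta_p(\F,C)=\mu_p(\F)-\sum_B \mu_p(d{F}_B,B)+r-1-\chi_p(\F).
\]
The hamiltonian sum is handled by the foliation-theoretic Milnor decomposition of a plane curve (Proposition \ref{Milnor formula}): for a reduced divisor, $\sum_B \mu_p(d{F}_B,B)-r+1$ collapses to $\mu_p(C)$. In parallel, Corollary \ref{GSV-zero divisor}, combined with Proposition \ref{gsv=delta}, identifies $\Delta_p(\F,C)$ with $\gsv_p(\F,C)$ in the non-dicritical reduced situation. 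Inserting these two reductions into the Theorem \ref{Th:Delta-chi} formula yields exactly $(\star)$, proving the first equation of the corollary.

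The main obstacle is the bookkeeping that aligns Theorem \ref{Th:Delta-chi} (whose adapted divisors ${F}_B$ must be interpreted correctly in the non-dicritical case where every separatrix is isolated) with the plane-curve Milnor decomposition of Proposition \ref{Milnor formula} and with the normalization used for $\gsv_p(\F,C)$ in Section \ref{sect:GSV}. Once this combinatorial matching is verified, $(\star)$ follows and the first identity is settled. For the \emph{moreover} clause, Proposition \ref{prop_chi} gives $\chi_p(\F)=0$ whenever $\F$ is of second type of algebraic multiplicity at least two (the multiplicity-one case being immediate), so the main formula collapses to $\mu_p(\F)-\tau_p(\F,C)=\mu_p(C)-\tau_p(C)$, completing the proof.
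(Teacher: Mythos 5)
Your argument is correct, but it reaches the key identity by a different route than the paper. The paper's proof is a two-line citation: Proposition \ref{obs_1} (which you also use) together with Corollary \ref{gsv=milnor}, which states exactly your identity $(\star)$, namely $\gsv_p(\F,C)=\mu_p(\F)-\mu_p(C)-\chi_p(\F)$, and is proved there directly from Proposition \ref{gsv=delta}, Lemma \ref{index_2} and Teissier's formula. You instead re-derive $(\star)$ from Theorem \ref{Th:Delta-chi}: specializing to $\B=C$ with $F_B=f$ for every branch, collapsing the hamiltonian sum via $\sum_j\mu_p(df,C_j)=\mu_p(C)+\ell-1$, and then identifying $\Delta_p(\F,C)$ with $\gsv_p(\F,C)$. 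This works, with two small referencing adjustments: the collapse of the hamiltonian sum is the identity \eqref{eq_mi}, obtained by applying Proposition \ref{Milnor and chi} to the hamiltonian foliation $df$ (the statement of Proposition \ref{Milnor formula} alone gives it only after also invoking $\mu_p(df,C_j)=\mu_p(C_j)+\sum_{i\neq j}i_p(C_i,C_j)$ from its proof); and the equality $\Delta_p(\F,C)=\gsv_p(\F,C)$ comes most directly from Theorem \ref{mol} (or the remark following it) applied to the effective balanced divisor $C$, or from Proposition \ref{prop-B} with trivial pole divisor, rather than from the pair Corollary \ref{GSV-zero divisor} plus Proposition \ref{gsv=delta}, which by themselves do not mention $\Delta$. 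Your route is longer but stays within results already proved in the paper; the paper's route is shorter because Corollary \ref{gsv=milnor} packages the polar-intersection computation once and for all. For the \emph{moreover} clause, note that Proposition \ref{prop_chi}(2) gives $\chi_p(\F)=0$ for any second type foliation, with no restriction on the algebraic multiplicity, so your separate treatment of the multiplicity-one case is unnecessary though harmless.
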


  The main result in this section, and another of the main results in the paper,  is Theorem \ref{excess}, where given a balanced divisor of separatrices $\B=\sum_{B}a_{B}B$ of the singular foliation $\F$, we compute the difference of the Milnor of $\F$ and the sum $T_{p}(\F,\B)=\sum_{B}a_{B}\tau_{p}(\F,B)$ of Tjurina numbers of $\F$ along the components of $\B$:

\begin{secondtheorem}\label{excess}
Let $\F$ be a singular foliation at $(\mathbb C^2,p)$ and let $\B=\sum_{B}a_BB$ be a balanced divisor of separatrices for $\F$. Then

\begin{eqnarray*}
\mu_p(\F)- T_{p}(\F,\B)&=&\sum_{B}a_B[ \mu_p(dF_B,B) -\tau_p(B)]-\deg(\B)+1+\chi_p(\F)\\
& & - \sum_{B} a_B[i_p(B,(F_B)_0\setminus B)-i_p(B,(F_B)_{\infty})], 
\end{eqnarray*}

where ${F}_B$ is a balanced divisor of separatrices for $\F$ adapted to $B$.
\end{secondtheorem}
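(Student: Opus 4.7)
The plan is to reduce the desired identity to Theorem A by first converting each Tjurina number $\tau_p(\F,B)$ into a GSV-index through the G\'omez-Mont-type formula of Proposition \ref{obs_1}, and then expressing the weighted sum of GSV-indices in terms of the polar excess $\Delta_p(\F,\B)$.

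First, I would apply Proposition \ref{obs_1} to each component $B$ in the support of $\B$, regarded as a single-branch reduced curve of separatrices of $\F$. This gives $\tau_p(\F,B) - \tau_p(B) = \gsv_p(\F,B)$ for every $B$, so, after multiplying by $a_B$ and summing,
\[
T_p(\F,\B) = \sum_B a_B \tau_p(B) + \sum_B a_B\, \gsv_p(\F,B).
\]
Next, I would use the polar interpretation of the GSV-index provided by Proposition \ref{gsv=delta} together with Proposition \ref{prop:gsv and multiplicity}: if $F_B$ is a balanced divisor of separatrices of $\F$ adapted to $B$, then intersecting $B$ with a generic polar curve of $\F$ decomposes, up to the contribution coming from the hamiltonian foliation $dF_B$, into intersections of $B$ with $(F_B)_0 \setminus B$ and with $(F_B)_\infty$. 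Summing these local decompositions with weights $a_B$, and invoking Corollary \ref{GSV-zero divisor}, which controls the GSV-index along the full zero divisor of a reduced balanced divisor, one should arrive at
\[
\sum_B a_B\, \gsv_p(\F,B) = \Delta_p(\F,\B) + \sum_B a_B\bigl[i_p(B,(F_B)_0 \setminus B) - i_p(B,(F_B)_\infty)\bigr].
\]

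Finally, I would apply Theorem A to replace $\Delta_p(\F,\B)$ by $\mu_p(\F) - \sum_B a_B \mu_p(dF_B,B) + \deg(\B) - 1 - \chi_p(\F)$, substitute the result into the two displayed identities above, and rearrange to isolate $\mu_p(\F) - T_p(\F,\B)$; the claimed equality follows.

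The main obstacle is establishing the second displayed identity. While Proposition \ref{obs_1} handles the single-branch reduction cleanly, the polar interpretation of GSV is stated with respect to a generic polar of the invariant curve, and one must argue that, for a balanced divisor $\B$, the contributions from the different branches assemble into exactly the polar excess $\Delta_p(\F,\B)$ modulo the intersection correction term. The signs require particular care for dicritical components, where $a_B < 0$ and the pole part $(F_B)_\infty$ actively enters; verifying that the intersection terms $i_p(B,(F_B)_0\setminus B)$ and $i_p(B,(F_B)_\infty)$ account precisely for the discrepancy between the branch-by-branch GSV and the global polar excess is the technical crux of the proof.
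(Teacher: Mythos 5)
Your skeleton coincides with the paper's: convert each $\tau_p(\F,B)$ into $\gsv_p(\F,B)$ via Proposition \ref{obs_1}, rewrite the weighted sum of GSV-indices as $\Delta_p(\F,\B)$ plus an intersection correction, and finish with Theorem \ref{Th:Delta-chi}. The genuine gap is your second displayed identity, which you yourself flag as the ``technical crux'' but never establish. What is needed is the branch-by-branch formula
\[
\gsv_p(\F,B)=\Delta_p(\F,B)+i_p(B,(F_B)_0\setminus B)-i_p(B,(F_B)_{\infty}),
\]
and neither Proposition \ref{prop:gsv and multiplicity} (which relates the GSV-index to $\mu_p(\F,B)$ and plays no role here) nor Corollary \ref{GSV-zero divisor} (which concerns the full zero divisor $\B_0$ of a reduced balanced divisor, not an individual branch carrying a possibly negative weight $a_B$) delivers it. The paper closes exactly this step by invoking Theorem \ref{mol} (Genzmer--Mol) with $C=B$ and the balanced divisor $F_B$ adapted to $B$; summing over $B$ with weights $a_B$ and using the definition \eqref{END:1} of $\Delta_p(\F,\B)$ then yields your identity immediately, after which Theorem \ref{Th:Delta-chi} finishes the proof as you indicate.

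If you prefer your polar-curve route, it can in fact be completed, but the computation must be carried out rather than asserted: by Proposition \ref{gsv=delta}, $\gsv_p(\F,B)=i_p(\mathcal{P}^{\F},B)-i_p(\mathcal{P}^{dh},B)$ for $B:h=0$, while by the definition \eqref{EN:1}, $\Delta_p(\F,B)=i_p(\mathcal{P}^{\F},B)-i_p(\mathcal{P}^{dF_B},B)$; subtracting, the whole point is to show
\[
i_p(\mathcal{P}^{dF_B},B)-i_p(\mathcal{P}^{dh},B)=i_p(B,(F_B)_0\setminus B)-i_p(B,(F_B)_{\infty}),
\]
which follows from the explicit expansion of $\mathcal{P}^{dF_B}$ already performed in the proof of Lemma \ref{lema: db}, writing $F_B=h\,u/\phi$ with $u$ an equation of $(F_B)_0\setminus B$ and $\phi$ an equation of $(F_B)_{\infty}$. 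As written, your proposal leaves this middle identity unproved and points to the wrong auxiliary statements, so the argument is incomplete; supplying either the citation of Theorem \ref{mol} or the displayed polar computation repairs it.
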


 As a consequence,  we precise this relationship for second type foliations in Corollary \ref{cor:1} and for non-dicritical foliations in:
 
 \begin{secondcorollary}\label{cor:6}
Let $\F$ be a singular foliation at $(\mathbb C^2,p)$. Assume that $\F$ is non-dicritical and $C=\cup_{j=1}^{\ell}C_j$ is the total union of separatrices of $\F$. Then
\[\mu_p(\F)-T_p(\F,C)=\mu_p(C)-\sum_{j=1}^{\ell}\tau_p(C_{j})+\chi_p(\F)-\sum_{j=1}^{\ell}i_p(C_j,C\setminus C_j),\]
where $\tau_p(C_{j})$ is the Tjurina number of $C_{j}.$
\end{secondcorollary}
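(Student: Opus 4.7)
The plan is to obtain Corollary D by specializing Theorem C to the non-dicritical setting and then invoking the Milnor formula for reduced plane curves stated as Proposition \ref{Milnor formula}.

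First, I would record what happens in Theorem C when $\F$ is non-dicritical. In that case, $\F$ admits only finitely many separatrices, and the total union $C=\bigcup_{j=1}^{\ell}C_j$ is the unique balanced divisor of separatrices for $\F$, with every weight $a_{C_j}=1$; thus $\deg(\B)=\ell$. Since there are no dicritical components, for each index $j$ the only balanced divisor of separatrices adapted to $C_j$ is $F_{C_j}=C$ itself, so that its zero divisor satisfies $(F_{C_j})_{0}=C$ (hence $(F_{C_j})_{0}\setminus C_j=C\setminus C_j$) and its pole divisor is empty, giving $i_p(C_j,(F_{C_j})_{\infty})=0$. Substituting all of this into the formula of Theorem C yields
\[
\mu_p(\F)-T_{p}(\F,C)=\sum_{j=1}^{\ell}\bigl[\mu_p(dC,C_j)-\tau_p(C_j)\bigr]-\ell+1+\chi_p(\F)-\sum_{j=1}^{\ell}i_p(C_j,C\setminus C_j).
\]

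To identify the first term with $\mu_p(C)$, I would invoke Proposition \ref{Milnor formula}, which expresses the Milnor number of a reduced plane curve in terms of the Milnor numbers of its components; applied to the hamiltonian foliation $dC=0$, it yields the identity
\[
\mu_p(C)=\sum_{j=1}^{\ell}\mu_p(dC,C_j)-\ell+1.
\]
Plugging this into the displayed formula cancels the $-\ell+1$ term and replaces the sum of multiplicities by $\mu_p(C)$, producing exactly the statement of Corollary D.

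The main obstacle is essentially bookkeeping: ensuring that in the non-dicritical case the adapted divisor $F_B$ is canonically $C$ (so there is no ambiguity in the formulation of Theorem C), that the pole contribution vanishes identically, and that the identity $\sum_j\mu_p(dC,C_j)-\ell+1=\mu_p(C)$ follows cleanly from Proposition \ref{Milnor formula}. Once these points are verified, the corollary is an immediate specialization.
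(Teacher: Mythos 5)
Your argument is correct and is essentially the paper's own proof: specialize Theorem \ref{excess} to $\B=C$ with $F_{C_j}=C$ (so $(F_{C_j})_0\setminus C_j=C\setminus C_j$ and the pole term vanishes, $\deg(\B)=\ell$), then replace $\sum_{j=1}^{\ell}\mu_p(dC,C_j)-\ell+1$ by $\mu_p(C)$. The only adjustment is the citation for that last identity: it is not the statement of Proposition \ref{Milnor formula} (which relates $\mu_p(C)$ to the $\mu_p(C_j)$ and the intersection numbers $i_p(C_i,C_j)$, not to $\mu_p(dC,C_j)$), but is obtained either by applying Proposition \ref{Milnor and chi} to the hamiltonian foliation $df$ (using $\chi_p(df)=0$, since $df$ is of second type), which is exactly what the paper does, or equivalently by combining Proposition \ref{Milnor formula} with the equality $\mu_p(df,C_j)=\mu_p(C_j)+\sum_{i\neq j}i_p(C_i,C_j)$ appearing in its proof.
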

 
We complete this section with several examples. In particular, in Example \ref{ex:Arturo}, we construct a family of dicritical foliations which are not of the second type. 
We finish  Section \ref{sect:Tjurina} stating numerical relationships between some classic indices, such  as Baum-Bott, Camacho-Sad, and variational indices,  of a singular foliation  and the  Milnor and Tjurina numbers.  Finally, in Section \ref{sect: bound}, we obtain a bound for the sum of Milnor numbers of the local separatrices of a holomorphic foliation on the complex projective plane.

\section{Basic tools}
\label{sec:basic tools}

\par  In order to fix the terminology and the notation, we recall  some basic concepts of local Foliation Theory. Unless we specify otherwise, throughout this text $\F$ denotes a germ of a singular (holomorphic or formal) foliation at $(\C^2,p)$. In local coordinates $(x,y)$ centered at $p$, the foliation is given by a (holomorphic or formal) 1-form
\begin{equation}
\label{oneform}
\omega=P(x,y)dx+Q(x,y)dy,
\end{equation}
or by its dual vector field
\begin{equation}
\label{vectorfield}
v = -Q(x,y)\frac{\partial}{\partial{x}} + P(x,y)\frac{\partial}{\partial{y}},
\end{equation}
where  $P(x,y), Q(x,y)   \in {\mathbb C}[[x,y]]$ are relatively prime, where ${\mathbb C}[[x,y]]$ is the ring of complex formal power series in two variables. 
 The \textit{algebraic multiplicity} $\nu_p(\F)$ is the minimum of the orders $\nu_p(P)$, $\nu_p(Q)$ at $p$ of the coefficients of a local generator of $\F$. 
\par Let $f(x,y)\in  \mathbb{C}[[x,y]]$. We say that $C: f(x,y)=0$  is {\em invariant} by $\F$ or $\F$-\emph{invariant} if $$\omega \wedge d f=(f.h) dx \wedge dy,$$ for some  $h\in \mathbb{C}[[x,y]]$. If $C$ is an irreducible $\F$-invariant curve then we say that $C$ is a {\em separatrix} of $\F$. The separatrix $C$ is analytic if $f$ is convergent.  We denote by $\sep_p(\F)$ the set of all separatrices of $\F$. When $\sep_p(\F)$ is a finite set we say that the foliation $\F$ is {\em non-dicritical} and we call {\it total union of separatrices} of $\F$ to the union of all elements of  $\sep_p(\F)$. Otherwise we say that $\F$ is a {\em dicritical} foliation.

A point $p \in \co^{2}$  is a {\em reduced} or {\em simple} singularity for $\cl{F}$  if the
 linear part ${\rm D} v(p)$ of the vector field $v$ in \eqref{vectorfield}  is non-zero and  has eigenvalues $\lambda_1, \lambda_2 \in \co$ fitting in one of the two following cases:
\begin{enumerate}
\item  $\lambda_1 \lambda_2 \neq 0$ and $\lambda_1 / \lambda_2 \not \in \qe^+$  
(in this case we say that $p$ is a {\em non-degenerate} or {\em complex hyperbolic} singularity). \smallskip
\item $\lambda_1 \neq 0$ and  $\lambda_2= 0$ \ (in this case we say that $p$ is a {\em saddle-node} singularity).
\end{enumerate}

The reduction process of the singularities of a codimension one singular foliation over an ambient space of dimension two was achieved by Seidenberg  \cite{seidenberg}. Van den Essen gave a new proof in \cite{Van}.

A singular foliation $\F$ at $(\C^2,p)$ is a \textit{generalized curve foliation} if it has no saddle-nodes in its reduction process of singularities, that is, the case (1). This concept was defined by Camacho-Lins Neto-Sad \cite[Page 144]{CLS}. In this case, there is a system of coordinates  $(x,y)$ in which $\F$ is induced by the equation
\begin{equation}
\label{non-degenerate}
\omega=x(\lambda_1+a(x,y))dy-y(\lambda_2+b(x,y))dx,
\end{equation}
where $a(x,y),b(x,y)  \in {\mathbb C}[[x,y]]$ are non-units, so that  $\sep_p(\F)$ is formed by two
transversal analytic branches given by $\{x=0\}$ and $\{y=0\}$. In the case (2), up to a formal change of coordinates, the  saddle-node singularity is given by a 1-form of the type
\begin{equation}
\label{saddle-node-formal}
\omega = x^{k+1} dy-y(1 + \lambda x^{k})dx,
\end{equation}
where $\lambda \in \mathbb{C}$ and $k \in \mathbb{Z}_{>0}$ are invariants after formal changes of coordinates (see \cite[Proposition 4.3]{martinetramis}).
The curve $\{x=0\}$   is an analytic separatrix, called {\em strong}, whereas $\{y=0\}$  corresponds to a possibly formal separatrix, called {\em weak} or {\em central}.

%

\par Given a  foliation $\F$ at $(\C^2,p)$ we follow \cite[page 1115]{Fernandez} to introduce
 the set $\cl{I}_{p}(\F)$ of \textit{infinitely near points} of $\F$ at $p$. This  is
defined   in a  recursive way along the reduction process of the singularities of $\F$.
We do as follows. Given a sequence of blow-ups $\pi: (\tilde{X},\D) \to
(\co^2,p)$ --- a possibly intermediate step in the reduction process, where  $\D=\pi^{-1}(p)$ and $\tilde{X}$ is the ambient space containing $\D$ --- and a point $q \in \D$ we set:
\begin{itemize}
\item  if $\tilde{\F}$ is $\D$-reduced at $q$, i.e. $q\in\D$ is a reduced singularity for $\tilde{\F}$,  then $\cl{I}_{q}(\tilde{\F}) = \{q\}$;
\item  if  $\tilde{\F}$ is $\D$-singular but not $\D$-reduced at $q$,
we perform a  blow-up $\sigma:( \hat{X}  ,\hat{\D}) \to (\tilde{X},\D)$ at $q$, where $\hat{\D} = \sigma^{-1}(\D) = (\sigma^{*}\D ) \cup D$ and
  $D = \sigma^{-1}(q)$ (here $\sigma^{*}\D$ denotes the strict transform of $\D$).
If $q_{1}, \ldots, q_{\ell}$ are all $\hat{\D}$-singular points of $\hat{\F} = \sigma^{*} \tilde{\F}$ on $D$,
then \[\cl{I}_{q}(\tilde{\F}) = \{q\} \cup \cl{I}_{q_{1}}(\hat{\F}) \cup \ldots \cup \cl{I}_{q_{\ell}}(\hat{\F}).\]
\end{itemize}
In order to simplify notation, we settle that  a  numerical invariant for a foliation $\F$  at $q \in \cl{I}_{p}(\F)$    actually means the same invariant computed
  for the strict transform of $\F$ at $q$.
 \par For a fixed reduction process of singularities $\pi:(\tilde{X},\D)\to(\C^{2},p)$ for $\F$, a component  $D \subset \D$ can be:
\begin{itemize}
\item {\em non-dicritical}, if $D$ is $\tilde{\F}$-invariant. In this case, $D$ contains a finite number of simple singularities. Each non-corner singularity of $D$ carries a separatrix   transversal to $D$, whose projection by $\pi$ is a curve in $\sep_{p}(\cl{F})$. Remember that a corner singularity of $D$ is an intersection point of $D$ with other irreducible component of $\D$.
\item {\em dicritical}, if $D$ is not $\tilde{\F}$-invariant. The reduction process
of singularities gives that $D$ may intersect only non-dicritical components of $\D$ and  $\tilde{\F}$ is everywhere transverse to $D$. The $\pi$-image of a local leaf of $\tilde{\F}$ at each non-corner point of $D$ belongs to $\sep_{p}(\cl{F})$.
\end{itemize}
\par Let $\sigma$ be the blow-up of the reduction process of singularities $\pi$ that generated the component $D\subset\D$. We say that $\sigma$ is \emph{non-dicritical} (respectively \emph{dicritical}) if $D$ is non-dicritical (respectively dicritical).

\par Denote by   $\sep_{p}(D) \subset \sep_{p}(\cl{F})$ the set of separatrices whose strict transform
 by $\pi$ intersects the
component $D \subset \D$. If $B \in \sep_{p}(D)$ with $D$ non-dicritical, $B$ is said to be \textit{isolated}. Otherwise, it is said to be a \textit{dicritical separatrix}.
This determines the   decomposition $\sep_{p}(\cl{F}) = \iso_{p}(\cl{F}) \cup \dic_{p}(\cl{F})$, where notations are self-evident.
The set $\iso_{p}(\cl{F})$  is finite and  contains all   purely
formal separatrices. It   subdivides further    in two classes:
 \textit{weak} separatrices --- those arising from the weak separatrices of saddle-nodes --- and \textit{strong} separatrices --- corresponding to strong separatrices
of saddle-nodes and separatrices of non-degenerate singularities. On the other hand, if  $\dic_{p}(\cl{F})$ is non-empty  then it is an infinite set of analytic separatrices.
Observe that a foliation  $\cl{F}$ is {\em  dicritical}
  when $\sep_{p}(\cl{F})$ is infinite, which is equivalent to saying that $\dic_{p}(\cl{F})$ is non-empty. Otherwise, $\cl{F}$ is {\em non-dicritical}.

Throughout the text, we adopt the language of \textit{divisors} of formal curves.
More specifically, a \textit{divisor of separatrices} for a foliation $\F$ at $(\C^2,p)$ is
a formal sum
\begin{equation}\label{divisor}
\B = \sum_{B \in \text{Sep}_{p}(\F)} a_{B} \cdot B, 
\end{equation}
where the coefficients $a_{B} \in \ze$ are zero except for finitely many $B \in \sep_{p}(\F)$. The set of separatrices $\{B\;:\; a_{B}\neq 0\}$ appearing in \eqref{divisor} is called the \emph{support} of the divisor $\B$ and it is denoted by $\hbox{\rm supp}(\B)$. The \emph{degree} of the divisor $\B$ is by definition $\deg \B=\sum_{B\in \hbox{\rm supp}(\B) }a_{B}$.
We denote by $\divv_{p}(\F)$ the set of all these divisors of separatrices, which turns into a group with the canonical additive structure.
We follow  the usual terminology and notation:
\begin{itemize}
\item $\B \geq 0$ denotes an \textit{effective} divisor, one whose  coefficients are all  non-negative;
\item   there is a unique decomposition $\B = \B_{0} - \B_{\infty}$, where $\B_{0}, \B_{\infty} \geq 0$ are respectively the \textit{zero}
and \textit{pole} divisors of $\B$;
\item the \textit{algebraic multiplicity} of   $\B$ is
$\nu_{p}(\B)=\displaystyle\sum_{B \in \hbox{\rm supp}(\B) } \nu_{p}(B).$
\end{itemize}
Given a    foliation $\F$ and a formal meromorphic equation ${F}(x,y)=\prod_{i=1}^{s}f_{i}(x,y)^{a_{i}}$, whose irreducible components define
 separatrices  $B_i:f_{i}(x,y)=0$ of $\F$ , we associate   the divisor
$ ({F}) = \sum_{i} a_{i} \cdot B_{i}$.
A curve of separatrices ${C}$, associated with a reduced equation ${F}(x,y)$, 
is identified to
the divisor $({F})$ and we write ${C} = ({F})$. Such an effective divisor is named \textit{reduced} if all its coefficients are either $0$ or $1$. In general,  $\B \in \divv_{p}(\F)$  is reduced if both
$\B_{0}$ and $\B_{\infty}$ are reduced divisors.
A divisor $\B$ is said to be \textit{adapted} to a curve of separatrices ${C}$ if $\B_0 -{C} \geq 0$.
\par Following  \cite[page 5]{Genzmer} and \cite[Definition 3.1]{Genzmer-Mol}, we remember the following notion:
\begin{definition}\label{def-balanced-set}
  A \emph{balanced divisor of separatrices}  for $\F$  is a divisor of the form
\[ \B \ = \
\sum_{B\in {\rm Iso}_p(\F)} B+ \sum_{B\in {\rm Dic}_{p}(\F)}\ a_{B}  \cdot B,\]
where the coefficients $a_{B} \in \mathbb{Z}$  are  non-zero except for finitely many $B \in
\dic_{p}(\F)$, and, for each  dicritical  component $D \subset \D$,
 the following equality is respected:
\[\sum_{B \in {\text{Sep}_{p}(D)}}a_{B} = 2- \val(D).\]

  The integer $\val(D)$ stands for the {\em valence} of a component $D \subset \D$ in the reduction process of singularities, that is,  it is the   number of  components of $\D$ intersecting $D$ other from $D$ itself.
\end{definition}

Observe that the notion of balanced divisor of separatrices  generalizes to dicritical foliations the notion of total union of separatrices for non-dicritical foliations.\\

 A balanced divisor $\B=\sum_{B}a_{B}B$ of separatrices of $\F$ is called \textit{primitive} if $a_{B}\in \{-1,1\}$ for any $B\in \hbox{\rm supp}(\B)$.
A \textit{balanced equation of separatrices} is a formal meromorphic function ${F}(x,y)$ whose associated divisor ${C}={C}_0-{C}_{\infty}$ is a balanced divisor. A balanced equation is \textit{reduced}, \textit{primitive} or \textit{adapted} to a curve $C$ if the same is true for the underlying divisor.
\par Remember that the \textit{intersection number} of two formal curves $C$ and $D$ at $(\C^2,p)$ is by definition  \[i_p(C,D)=\dim_{\C}\C[[x,y]]/(g,h),\] 
where $C:g(x,y)=0$, $D: h(x,y)=0$, and $(g,h)$ denotes the ideal generated by $g$ and $h$ in $\C[[x,y]]$. 
The intersection number for formal curves at $(\C^2,p)$ is canonically extended in a bilinear way to divisors of curves.

\par Let $\F$ be a  foliation at $(\C^2,p)$ given by a $1$-form as in (\ref{oneform}), with reduction process $\pi:(\tilde{X},\D)\to (\C^{2},p)$ and let $\tilde{\F} = \pi^{*} \F$ be the strict transform of  $\F$. Denote by $ \sing(\cdot)$ the set of singularities of a foliation.
A saddle-node  singularity $q \in \sing(\Ftilde)$ 
is said to be a \textit{tangent saddle-node} if  its   weak separatrix is contained in the exceptional divisor $\D$, that is, the weak separatrix is an irreducible component of $\D$.

\par  We have the following definition given by Mattei-Salem \cite[D\'efinition 3.1.4]{mattei} to non-dicritical case and used by Genzmer \cite{Genzmer} for arbitrary foliations:
\begin{definition}
\label{def-2ndclass}
 A foliation   is  \textit{in the second class} or is \textit{of second type} if there
are no    tangent saddle-nodes in its reduction process of singularities.
\end{definition}

\par Let $B$ be a separatrix of $\F$ at $p$. Suppose that $\{y=0\}$ is the tangent cone of $B$, then we may choose one of its primitive Puiseux parametrizations 
$\gamma(t)=(t^n,\phi(t))$ at $p$ such that $n=\nu_p(B)$, where $\nu_p(B)$ denotes the algebraic multiplicity of $B$. The \textit{tangency index of $\F$ along $B$ at $p$} (or \textit{weak index} in \cite[page 1114]{Fernandez}) is
\[
\text{Ind}_p(\F,B):=\ord_{t}Q(\gamma(t)).
\]

The tangency index $\text{Ind}_p(\F,B)$ does not depend on the chosen parametrization of $B$  because by properties of the multiplicity number we get the equality $\ord_{t}Q(\gamma(t))=i_p(Q,B)$.
The foliation $\F$ given by the $1$-form defined in \eqref{saddle-node-formal} verifies  $\text{Ind}_p(\F,B)=k+1 > 1$, where $B:\{y=0\}$.

\par The tangency index was defined in \cite[page 159]{CLS}, where the authors denomine it {\it multiplicity of} $\F$ {\it along} $B$ {\it at} $p$ and denoted by $\mu_{\F}(B,p)$. In the same paper the authors define a similar notion, the index with respect to a vector field $Z$ (see page 152) and denote it by  $Ind_{p}(Z/B)$ which coincides with the multiplicity of $\F$ along $B$ at $p$  that we introduce in \eqref{eq_mult} and we denote by $\mu_{p}(\F, B)$. The reader should pay attention to it to avoid confusion. We adopt the notation given by Genzmer  \cite{Genzmer}, instead of the original given in \cite{CLS} since  $\mu_{p}(\F, B)$ resembles a Milnor number, which will be studied in Section \ref{sect:polar}.

\par Given a component $D \subset \D$, we denote by  $\nu(D)$  its multiplicity,    which coincides with
the algebraic multiplicity of a curve $E$ at $(\mathbb{C}^{2},p)$ whose strict transform $\pi^{*} E$ meets $D$ transversally
outside a corner of $\D$. The following invariant is a measure of the existence of tangent saddle-nodes in the reduction process of singularities of a foliation:

\begin{definition}{\rm
 The \emph{tangency excess} of the foliation $\F$ is defined as $\xi_p(\F)=0$, when $p$ is a reduced singularity, and, in the non-reduced case, as the number
\[\xi_{p}(\F)=\sum_{q \in \textsl{SN}(\F) }\nu(D_{q})(\text{Ind}_q(\Ftilde,\tilde B)  -1),\]
where  $\textsl{SN}(\F)$ stands for  the set of tangent saddle-nodes  on $\D$, $\tilde B$ is   the weak separatrix  passing by $q \in \textsl{SN}(\F)$, and $D_q$  is the component of $\D$ containing $\tilde B$. By \eqref{saddle-node-formal}, we observe that 
 $\text{Ind}_q(\Ftilde,\tilde B)=k+1> 1$. 
 }\end{definition}
Remark that $\xi_{p}(\F) \geq 0$ and, by definition, $\xi_{p}(\F) = 0$ if and only if $\textsl{SN}(\F) = \emptyset$, that is, if and only if  $\F$ is of second type.
In several papers (see for example \cite{Fernandez}, \cite{Cabrera-Mol}) the tangency excess of $\F$ is denoted by $\tau_p(\F)$. In this paper, we denote it by $\xi_{p}(\F)$ since we keep the letter $\tau$ for the Tjurina number of a curve or  a foliation.

\par The following proposition proved by Genzmer   (see \cite[Proposition 2.4]{Genzmer}) will be very useful in this paper:

\begin{proposition}
\label{prop:Equa-Ba} Let $\F$ be a singular foliation at $(\C^2,p)$ and $\B$ a balanced divisor of separatrices for $\F$.   Denote by $\nu_{p}(\F)$ and $\nu_{p}(\B)$ their
algebraic multiplicities respectively. Then
\begin{equation}\label{eq_segundo}
\nu_{p}(\F)=\nu_{p}(\B)-1+\xi_{p}(\F).
\end{equation}
Therefore,
\[\nu_{p}(\F)=\nu_{p}(\B)-1\]
if, and only if, $\F$ is a foliation of second type.
\end{proposition}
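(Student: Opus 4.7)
The natural strategy is induction on the length $N$ of the reduction process of singularities $\pi:(\tilde X,\D)\to(\C^2,p)$ of $\F$. The base case $N=0$, when $p$ is already reduced, is immediate: one has $\nu_p(\F)=1$, the two local separatrices (one possibly formal in the saddle-node case) form the balanced divisor with $\nu_p(\B)=2$, and $\xi_p(\F)=0$ since the reduction process is empty. Hence $1=2-1+0$, and the foliation is automatically of second type.

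For the inductive step, let $\sigma:(M,D)\to(\C^2,p)$ be the blow-up at $p$, write $\tilde\F=\sigma^*\F$, and let $q_1,\ldots,q_k$ be the non-reduced singularities of $\tilde\F$ on $D$. At each $q_i$ the induction hypothesis applied to $\tilde\F$ and to a balanced divisor $\tilde\B_i$ for $\tilde\F$ at $q_i$ gives
\[
\nu_{q_i}(\tilde\F)=\nu_{q_i}(\tilde\B_i)-1+\xi_{q_i}(\tilde\F).
\]
One then assembles these local identities into the global one at $p$. In the non-dicritical case, $D$ is $\tilde\F$-invariant, so the local branch of $D$ at each $q_i$ appears as an isolated separatrix of $\tilde\F$ and contributes $+1$ to $\nu_{q_i}(\tilde\B_i)$; the remaining components of $\tilde\B_i$ are the strict transforms (with the same coefficients $a_B$) of those components of $\B$ whose tangent direction at $p$ corresponds to $q_i$. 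Combining the classical transformation rule $\nu_p(B)=i_{q}(\tilde B,D)$ for a branch $B$ with the identity $\nu_p(\F)+1=\sum_i\text{Ind}_{q_i}(\tilde\F,D)$ along the invariant exceptional divisor, and telescoping over the $q_i$, one recovers $\nu_p(\F)=\nu_p(\B)-1+\xi_p(\F)$; the tangent saddle-nodes born at this stage are precisely those $q_i$ for which $D$ is the weak separatrix, and they account for the increment of $\xi$.

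In the dicritical case, $D$ is transverse to $\tilde\F$ and is not a separatrix, so it contributes nothing to any $\tilde\B_i$. The unit of multiplicity that the exceptional divisor contributed in the previous case must instead be absorbed by the balancing condition $\sum_{B\in\sep_p(D)}a_B=2-\val(D)$ on the dicritical component (which, at this first blow-up, reads $\sum a_B=2$). Checking that this coefficient sum exactly compensates the extra factor of the exceptional variable appearing in the dicritical transformation rule for $\nu_p(\F)$ is the heart of the argument and the main technical obstacle: it shows that Definition~\ref{def-balanced-set} is the only combinatorial prescription making the formula work. Once both cases are settled, the ``moreover'' part is immediate, since by construction $\xi_p(\F)\ge 0$ and $\xi_p(\F)=0$ if and only if $\textsl{SN}(\F)=\emptyset$, which by definition is precisely the condition for $\F$ to be of second type.
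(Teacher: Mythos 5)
The paper itself offers no proof of this proposition: it is quoted from Genzmer (Proposition~2.4 of the cited paper), whose argument is indeed an induction along the reduction of singularities, so your overall strategy is the expected one. But your write-up leaves exactly the load-bearing steps unverified. (i) You assert that at each $q_i$ the strict transforms of the components of $\B$, with the same coefficients, together with the local branch of $D$, form a balanced divisor for $\tilde\F$; this needs proof, because the balance condition is a statement about valences. A dicritical component of the local reduction at $q_i$ that meets the strict transform of $D$ has valence one less in the local reduction than in the global one, and it is precisely the branch of $D$ (which in that situation is a \emph{dicritical}, not an isolated, separatrix of $\tilde\F$ at $q_i$, contrary to what you claim) that restores the balance. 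None of this bookkeeping appears. (ii) The identity $\nu_p(\F)+1=\sum_i \text{Ind}_{q_i}(\tilde\F,D)$ is false with the sum restricted, as in your setup, to the \emph{non-reduced} singularities of $\tilde\F$ on $D$: it holds only when the sum runs over all singular points on $D$, and the reduced ones cannot be dropped, since the tangent saddle-nodes created at this first blow-up (which produce the increment of $\xi$) and the transverse crossings of $\B$ with $D$ occur exactly there.

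(iii) The ``telescoping'' of the tangency excess is not automatic either: $\xi_p(\F)$ weights each tangent saddle-node by $\nu(D_q)$, the multiplicity \emph{at $p$} of a curvette of the component $D_q$, whereas the inductive hypothesis at $q_i$ returns $\xi_{q_i}(\tilde\F)$ with weights computed \emph{at $q_i$}; these differ in general (the multiplicity at $p$ of the image of a curvette of a component lying over $q_i$ can exceed its multiplicity at $q_i$), so relating $\xi_p(\F)$ to $\sum_i\xi_{q_i}(\tilde\F)$ plus the contribution of the new saddle-nodes requires an explicit comparison of these weights, which is missing. (iv) Finally, you yourself concede the dicritical case, calling the verification that the condition $\sum_{B\in\sep_p(D)}a_B=2-\val(D)$ compensates the extra exceptional factor ``the heart of the argument and the main technical obstacle'' without carrying it out. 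Identifying the key difficulty is not the same as resolving it; as it stands the proposal is an outline of Genzmer's induction with its essential computations, in both the non-dicritical assembly and the dicritical case, left open.
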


%
%
%
%

Take a primitive parametrization $\gamma:(\C,0)\to(\C^2,p)$, $\gamma(t)=(x(t),y(t))$, of a formal irreducible curve $B: f(x,y)=0$ at $(\C^2,p)$. Note that $B$ is a separatrix of the foliation $\F: \omega=0$ if and only if $\gamma^{*}(\omega)=0$. 
If $B$ is not an $\F$-invariant curve, we define the \textit{tangency order} of $\F$ along $B$ at $p$ as
\begin{equation}
\label{def:tang}
\tang_p(\F,B)=\ord_{t} a(t),
\end{equation}
 where $\gamma^{*}(\omega)=a(t)dt$.  The tangency order does not depend on the choosen parametri\-zation of $B$, since  $\tang_{p}(\F,B)+\mu_p(B)=i_{p}(B, v(f))$ where $v$ is from (\ref{vectorfield}). The tangency index  $i_{p}(B, v(f))$ was introduced in \cite[page 22]{Brunella-libro}.

 \par The behavior under blow-up of the tangency order, in the non-dicritical case, was studied in \cite[equality (4)]{Cano}. The dicritical case is similar. Indeed, if $\F: \omega=0$ is a singular foliation at $(\mathbb C^{2},p)$,  $\tilde{\F}: \tilde \omega=0$ is its strict transform by the blow-up $\sigma$ at $p$ and $B$ is not an $\F$-invariant curve  then we have 
 \[
\tilde \omega=\left \{\begin{array}{ll}
x^{-\nu_p(\F)}\sigma^{*}(\omega)&\text{if $\sigma$ is non-dicritical;}\\
x^{-(\nu_p(\F)+1)}\sigma^{*}(\omega)&\text{if $\sigma$ is dicritical.}\\
\end{array}
\right. 
\]
Evaluating $\tilde \omega$ in a parametrization of the strict transform (by $\sigma$) $\tilde B$ of $B$  and taking orders we get

\begin{equation}\label{tang_index}
\tang_p(\F,B)=
\begin{cases}
\nu_p(\F)\nu_p(B)+\tang_q(\tilde{\F},\tilde B) & \text{if $\sigma$ is non-dicritical};
\medskip \\
(\nu_p(\F)+1)\nu_p(B)+\tang_q(\tilde{\F},\tilde B)  & \text{if $\sigma$ is dicritical},
\end{cases}
\end{equation}
\noindent where $q\in \tilde B\cap \sigma^{-1}(p).$

%
%
%
%
%

In  \cite[Corollary 1]{Cano} it was stablished that $i_p(\B,B)\leq \tang_p(\F,B)+1$ and the equality holds if and only if $\F$ is of second type. In  \cite[Lemma 4.2]{Cabrera-Mol}, the authors improved  \cite[Corollary 1]{Cano} as follows:
\[i_p(\B,B)=\tang_p(\F,B)-\sum_{q \in \cl{I}_{p}(\F)}\nu_q(B)\xi_q(\F)+1,\]
where $\F$ is a singular foliation at $(\C^2,p)$, $\B$ is a balanced divisor of separatrices for $\F$ and $B$ is a branch which is not $\F$-invariant. A proof, similar to the one given in \cite{Cabrera-Mol}, holds for formal and dicritical foliations.

%
%
%

\section{The $\chi$-number of a foliation}

\label{sect:chi number}

\par \par 
For a singular foliation $\F$ at $(\C^2,p)$ we introduce a new number 
\[\chi_p(\F):=\left(\sum_{q\in\cl{I}_p(\F)}\nu_q(\F)\xi_q(\F)\right )-\xi_p(\F).\]

%
%
Observe that
\begin{equation}
\label{Xi2}
\chi_p(\F)=\displaystyle\sum_{q\in\cl{I}_p(\F)\setminus\{p\}}\nu_q(\F)\xi_q(\F)+(\nu_p(\F)-1)\xi_p(\F).
\end{equation}

In \cite[Proposition 9.5]{Mol-Rosas} the authors prove that the tangency excess is a $C^{\infty}$ invariant, and after \cite{Rosas} the algebraic multiplicity of a holomorphic foliation is a $C^{1}$ invariant. Hence the $\chi$-number of a holomorphic foliation is a $C^{1}$ invariant. This invariant has the following properties:

\begin{proposition}\label{prop_chi}
Let $\F$ be a singular foliation at $(\C^2,p)$, then we get:
\begin{enumerate}
\item $\chi_p(\F)\geq 0$;
\item if $\F$ is of second type then $\chi_p(\F)=0$;
\item if $\chi_p(\F)=0$, then either $\F$ has algebraic multiplicity 1 at $p$ or $\F$ is of second type;
\item if $\nu_p(\F)>1$, then 
$\chi_p(\F)=0$ if and only if $\F$ is of second type.

\end{enumerate}
\end{proposition}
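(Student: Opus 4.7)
The plan is to read everything off the expression
\[
\chi_p(\F)=\sum_{q\in\cl{I}_p(\F)\setminus\{p\}}\nu_q(\F)\xi_q(\F)+(\nu_p(\F)-1)\xi_p(\F)
\]
recorded in (\ref{Xi2}). Each factor on the right is a priori non-negative: $\nu_q(\F)\geq 1$ at every $q\in\cl{I}_p(\F)$ because, by construction of the set of infinitely near points, such a $q$ is a singularity of the corresponding transform of $\F$; and $\xi_q(\F)\geq 0$ by the remark following the definition of the tangency excess. This already yields item (1).

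For item (2), if $\F$ is of second type at $p$ then $\textsl{SN}(\F)=\emptyset$, so no tangent saddle-node appears over any infinitely near point $q$ either, since the reduction of the transform of $\F$ at $q$ is merely a sub-process of the reduction at $p$. Therefore $\xi_q(\F)=0$ for every $q\in\cl{I}_p(\F)$, and the formula above gives $\chi_p(\F)=0$.

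For item (3), suppose $\chi_p(\F)=0$. By the non-negativity of the summands, every term of (\ref{Xi2}) must vanish separately. The equalities $\nu_q(\F)\xi_q(\F)=0$ combined with $\nu_q(\F)\geq 1$ force $\xi_q(\F)=0$ for all $q\in\cl{I}_p(\F)\setminus\{p\}$. From $(\nu_p(\F)-1)\xi_p(\F)=0$, either $\nu_p(\F)=1$, which is the first alternative of the statement, or $\xi_p(\F)=0$, which, by the characterization recalled in the excerpt, is equivalent to $\F$ being of second type at $p$. Item (4) is then immediate: under $\nu_p(\F)>1$, direction $(\Leftarrow)$ is (2), while $(\Rightarrow)$ follows from (3) since the first alternative $\nu_p(\F)=1$ is excluded by hypothesis.

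I do not foresee a genuine obstacle here: the whole argument is bookkeeping on the formula (\ref{Xi2}) together with the dichotomy ``$\xi_q(\F)=0$ if and only if the transform of $\F$ at $q$ is of second type''. The only point deserving a moment of care is the inheritance used in item (2), namely that being of second type is preserved when passing to the transforms at infinitely near points; this simply expresses the obvious fact that the reduction of $\tilde\F$ at such a $q$ is embedded in the fixed reduction process of $\F$ at $p$.
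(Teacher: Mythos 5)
Your proposal is correct and follows essentially the same argument as the paper: both read items (1)--(3) directly off the decomposition \eqref{Xi2} as a sum of nonnegative terms, use that $\xi_q(\F)\geq 0$ with $\xi_p(\F)=0$ characterizing second type, and deduce (4) from (2) and (3). The extra remark you make for item (2) — that second type at $p$ passes to the transforms at infinitely near points because their reduction is a sub-process of the fixed reduction at $p$ — is a justified detail the paper leaves implicit.
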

\begin{proof}
By \eqref{Xi2} we have $\chi_p(\F)=\beta+(\nu_p(\F)-1)\xi_p(\F)$, where 
$\beta=\sum_{q\in\cl{I}_p(\F)\setminus\{p\}}\nu_q(\F)\xi_q(\F).$
Clearly, $\chi_p(\F)\geq 0$, since it is the sum of two nonnegative numbers. Now, if $\F$ is of second type, we get $\xi_q(\F)=0$, for all $q\in\cl{I}_p(\F)$, which implies that $\chi_p(\F)=0$. 
On the other hand, if $\chi_p(\F)=0$ then $\beta=0$ and $(\nu_p(\F)-1)\xi_p(\F)=0$. This finishes the proof of (3). Item (4) is an immediate consequence of (2) and (3). 
\end{proof}


\begin{remark}
Let $\omega=4xydx+(y-2x^2)dy$ be a $1$-form. Observe that the foliation $\F:\omega=0$ at $(\mathbb{C}^2,0)$ is not of second type, its algebraic multiplicity is one but $\chi_0(\F)=1\neq 0$.
\end{remark}

\section{Polar intersection, polar excess and Milnor  numbers}
\label{sect:polar}

Let $\omega=P(x,y)dx+Q(x,y)dy$ be a 1-form, where $P(x,y),Q(x,y)\in \mathbb C[[x,y]]$.
If $\F: \omega=0$ is a singular (analytic or formal) foliation then {\it the polar curve} of $\F$  at $(\C^2,p)$ with respect to a point $(a:b)$ of the complex projective line $\mathbb P^1(\mathbb C)$ is the (analytic or formal)  curve $\mathcal{P}^{\F}_{(a:b)}: aP(x,y)+bQ(x,y)=0$. 
Observe that when $\F$ is the hamiltonian foliation associated to a function $f$, the polar curve of $\F$ coincide with the classical polar curve of $f$ in the direction $(a:b)$ studied by Teissier \cite{Teissier_inv} and others.  According to the general results on equisingularity (see \cite{Zariski} and \cite{Teissier_polar}), there exists a Zariski open $U$ of the space $\mathbb P^1(\mathbb C)$ of projection directions such that for $(a:b)$  the polar curves are all equisingular. Any element of this set is called {\it generic polar curve} of the foliation $\F$ and we will denote it by $\mathcal{P}^{\F}$. 

\noindent We borrow from \cite[Section 4]{Genzmer-Mol} the notion of polar curve of a meromorphic 1-form: let $\eta=\frac{\omega}{H(x,y)}$ be a  meromorphic 1-form,  where
$\omega=P(x,y)dx+Q(x,y)dy$ with $P(x,y),Q(x,y),H(x,y)\in \mathbb C[[x,y]]$. The {\it polar curve}  of  $\eta$ at $(\C^2,p)$ with respect to  $(a:b)\in \mathbb P^1(\mathbb C)$ is the divisor $\mathcal{P}^{\eta}_{(a:b)}$ with formal meromorphic equation \[\frac{aP(x,y)+bQ(x,y)}{H}=0.\] 
A polar curve $\frac{aP(x,y)+bQ(x,y)}{H}=0 $ of a  meromorphic 1-form  $\frac{\omega}{H(x,y)}$ will be generic if the polar curve $aP(x,y)+bQ(x,y)=0$ is a generic polar curve of the foliation defined by the $1$-form $\omega$. \par 

Let $B: h(x,y)=0$ be a separatrix  of a singular foliation $\F$. The {\it polar intersection number} of $\F$ with respect to $B$ is the intersection number $i_p({\mathcal P}^{\F},B)$.

\begin{lemma}
\label{lema: db}
Let $B: h(x,y)=0$ be a separatrix  of a singular foliation $\F$ at $(\mathbb C^2,p)$ and consider ${F}_{B}$ and ${G}_{B}$ two balanced divisors of separatrices for $\F$ adapted to $B$. Then
\[
i_p(\mathcal{P}^{d{F}_{B}},B)=i_p(\mathcal{P}^{d{G}_{B}},B).
\]
\end{lemma}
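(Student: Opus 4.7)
The plan is to reduce the lemma to the assertion that the formal meromorphic function $h:=F_B/G_B$ has zero intersection multiplicity with $B$, and then extract this from the balanced hypothesis. Writing $F_B=h\cdot G_B$, the Leibniz rule gives $dF_B=dh\cdot G_B+h\,dG_B$. Evaluating this identity on the constant vector field $v_{(a:b)}=a\partial_x+b\partial_y$ attached to a generic direction $(a:b)\in\pp{1}(\C)$ and pulling back via a primitive Puiseux parametrization $\gamma(t)$ of $B$, the fact that both divisors are adapted to $B$ forces $G_B\circ\gamma\equiv 0$, whence
\[
dF_B(v_{(a:b)})\circ\gamma\;=\;(h\circ\gamma)\cdot\bigl(dG_B(v_{(a:b)})\circ\gamma\bigr).
\]
For generic $(a:b)$, $i_p(\mathcal{P}^{dF_B},B)$ equals the $t$-adic order of $dF_B(v_{(a:b)})\circ\gamma$, and analogously for $G_B$; taking $t$-adic orders therefore yields
\[
i_p(\mathcal{P}^{dF_B},B)-i_p(\mathcal{P}^{dG_B},B)\;=\;i_p\bigl((h),B\bigr),
\]
so it suffices to prove $i_p((h),B)=0$.

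The divisor $(h)=(F_B)-(G_B)$ admits a clean description using the balanced condition. Both $(F_B)$ and $(G_B)$ carry each isolated separatrix of $\F$ with coefficient $1$, so the isolated part of $(h)$ vanishes. For each dicritical component $D$ appearing in the reduction process of $\F$, Definition \ref{def-balanced-set} ensures
\[
\sum_{B'\in\sep_p(D)}(\text{coef.\ of }B'\text{ in }F_B)\;=\;2-\val(D)\;=\sum_{B'\in\sep_p(D)}(\text{coef.\ of }B'\text{ in }G_B),
\]
so the restriction $\Delta_D$ of $(h)$ to $\sep_p(D)$ has total coefficient $0$, and $B$ itself is absent from its support (its coefficient is $1$ in both divisors). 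The crucial geometric step is to show that, for each dicritical component $D$, the map $B'\mapsto i_p(B',B)$ takes a single value $\iota_D(B)$ on $\sep_p(D)\setminus\{B\}$. Granted this, bilinearity yields $i_p(\Delta_D,B)=\iota_D(B)\cdot 0=0$, and summing over the dicritical components of the exceptional divisor gives $i_p((h),B)=0$.

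The main obstacle is this uniformity statement. The plan is to establish it via the Noether formula
\[
i_p(B',B)\;=\;\sum_{q}\nu_q(B')\,\nu_q(B),
\]
summed over common infinitely near points of $B'$ and $B$. Any two distinct elements of $\sep_p(D)$ share the same initial segment of the reduction tree of $\pi$, namely the path from $p$ to the center of the blow-up creating $D$; along this path the multiplicities $\nu_q(B')$ are determined entirely by the tree data and do not depend on the particular non-corner attachment point of $B'$ on $D$, while beyond $D$ two such separatrices diverge. Consequently $i_p(B',B)$ is independent of the choice of $B'\in\sep_p(D)\setminus\{B\}$, which completes the argument.
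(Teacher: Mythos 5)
Your proof is correct and follows essentially the same route as the paper: the Leibniz-rule restriction to $B$ repackages the paper's explicit computation of the two polars, reducing the lemma to $i_p\bigl((F_B)-(G_B),B\bigr)=0$, and you establish this exactly as the paper does, by combining the balanced condition at each dicritical component with Noether's formula and the fact that all separatrices attached to a fixed dicritical component attach at distinct non-corner points and share the same multiplicity sequence along the chain of infinitely near points creating that component. One cosmetic point: rename the quotient $F_B/G_B$, since the letter $h$ is already the equation of $B$ in the statement.
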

\begin{proof}
Put $ F_{B}=\frac{h\cdot \hat f \cdot g_{1}\cdots g_{l}}{\phi_{1}\cdots \phi_{m}}$ and
$ G_{B}=\frac{h\cdot \hat f \cdot h_{1}\cdots h_{s}}{\psi_{1} \cdots \psi_{r}}$, where $g_{i}(x,y)=0$, $h_{i}(x,y)=0$, $\phi_{i}(x,y)=0$ and $\psi_{i}(x,y)=0$ are dicritical separatrices of $\F$, $\hat f(x,y)=0$ defines the reduced curve which is the union of all isolated separatrices of $\F$ except perhaps $h(x,y)=0$ when this is also isolated. We get

\[d F_{B}=\frac{\phi(\mathcal P \cdot h+\hat f \cdot g_{1}\cdots g_{l}\cdot dh)-h\cdot \hat f \cdot g_{1}\cdots g_{l}d(\phi)}{\phi^{2}}
\]
\noindent and
\[
d G_{B}=\frac{\psi(\mathcal Q \cdot h+\hat f\cdot h_{1}\cdots h_{s}\cdot dh)-h\cdot \hat f \cdot h_{1}\cdots h_{s}d(\psi)}{\psi^{2}},
\]
\noindent where $\phi=\phi_{1}\cdots \phi_{m}$, $\psi=\psi_{1}\cdots \psi_{r}$, $\mathcal P=d(\hat f \cdot g_{1}\cdots g_{l})=:\mathcal P_{1}dx+\mathcal P_{2}dy$ and $\mathcal Q=d( \hat f \cdot h_{1}\cdots h_{s})=:\mathcal Q_{1}dx+\mathcal Q_{2}dy$. Put $u=\hat f \cdot  g_{1}\cdots g_{l}$ and $v=\hat f \cdot  h_{1}\cdots h_{s}$.   Hence
\[
\mathcal{P}^{d\hat{F}_{B}}=\frac{[\phi(\mathcal P_{1}\cdot h+u \cdot  \partial_x{h})- h u \cdot  \partial_x{\phi}]a+ 
[\phi(\mathcal P_{2}\cdot h+u \cdot \partial_y{h})- h u \cdot  \partial_y{\phi}]b}{\phi^2},
\]
\noindent and
\[
\mathcal{P}^{d\hat{G}_{B}}=\frac{[\psi(\mathcal Q_{1}\cdot h+v\cdot   \partial_x{h})- h v \cdot  \partial_x{\psi}]a+ 
[\psi(\mathcal Q_{2}\cdot h+v \cdot  \partial_y{h})- h v \cdot  \partial_y{\psi}]b}{\psi^{2}}.
\]
So
\begin{eqnarray}
i_p(\mathcal{P}^{d\hat{F}_{B}},B)&=&i_p\left(\hat f \cdot  g_{1}\cdots g_{l}\cdot  \left(a \partial_x{h}+b \partial_y{h}\right),h\right)-i_{p}(\phi_{1}\cdots \phi_{m},h) \nonumber \\ 
&=& i_p\left(\hat f \cdot  \left(a \partial_x{h}+b \partial_y{h}\right),h\right)+i_p( g_{1}\cdots g_{l},h)-i_{p}(\phi_{1}\cdots \phi_{m},h),
\end{eqnarray}
\noindent and
\begin{eqnarray}
i_p(\mathcal{P}^{d\hat{G}_{B}},B)& = & i_p\left(\hat f \cdot  h_{1}\cdots h_{s}\cdot  \left(a \partial_x{h}+b \partial_y{h}\right),h\right)
-i_{p}(\psi_{1}\cdots \psi_{r},h) \nonumber \\
& = & i_p\left(\hat f \cdot  \left(a \partial_x{h}+b \partial_y{h}\right),h\right)+i_p( h_{1}\cdots h_{s},h)-i_{p}(\psi_{1}\cdots \psi_{r},h).
\end{eqnarray}
\noindent We claim that $i_p( g_{1}\cdots g_{l},h)-i_{p}(\phi_{1}\cdots \phi_{m},h)=i_p( h_{1}\cdots h_{s},h)-i_{p}(\psi_{1}\cdots \psi_{r},h)$. Indeed, if every dicritical separatrix of $\F$ is smooth and transversal to any isolated separatrix, then using properties of the intersection multiplicity  we have
\begin{eqnarray}
i_p( g_{1}\cdots g_{l},h)-i_{p}(\phi_{1}\cdots \phi_{m},h)& = &\nu_{p}(h)\left [\sum_{j=1}^{l}\nu_{p}(g_{j})-\sum_{j=1}^{m}\nu_{p}(\phi_{j})\right ] \nonumber\\
& = &\nu_{p}(h)\left [\sum_{j=1}^{s}\nu_{p}(h_{j})-\sum_{j=1}^{r}\nu_{p}(\psi_{j})\right ] \label{eq: db}\\
& = & i_p( h_{1}\cdots h_{s},h)-i_{p}(\psi_{1}\cdots \psi_{r},h), \nonumber
\end{eqnarray}

\noindent where the equality \eqref{eq: db} holds since ${F}_{B}$ and ${G}_{B}$ are two balanced divisors of separatrices for $\F$. 

In the general case, after the reduction of singularities of the foliation we can suppose that the strict transform of every dicritical separatrix of $\F$ is smooth and transversal to any strict transform of every isolated separatrix.
We finish the proof using Noether formula.
\end{proof}

\par Lemma \ref{lema: db} allows us to define the {\it polar excess number} of a singular foliation $\F$ at $(\mathbb C^{2},p)$ with respect to a separatrix $B$ of  $\F$  as
 \begin{equation}
\label{EN:1}
\Delta_p(\F,B):=i_p(\mathcal{P}^{\F},B)-i_p(\mathcal{P}^{d{F}_{B}},B),
\end{equation}
where ${F}_{B}$ is any balanced divisor of separatrices for $\F$ adapted to $B$.
On the other hand, if the foliation $\F$ is non-dicritical  then it is enough to consider ${F}_{B}$ as 
the total union of the separatrices of $\F$.
\par Using properties of the intersection number we extend the definitions of  polar intersection and polar excess numbers to any divisor   ${\mathcal B}:=\sum_B a_B B$ of separatrices of $\F$ in the following way:
\[
 i_p({\mathcal P}^{\F},{\mathcal B})=\sum_{B}a_B i_p({\mathcal P}^{\F},B)
\]
\noindent and
 \begin{equation}
\label{END:1}
\Delta_p(\F,{\mathcal B})=\sum_{B}a_B \Delta_p(\F,B)=\Delta_p(\F,{\mathcal B}_0)-\Delta_p(\F,{\mathcal B}_{\infty}).
\end{equation}
If $\B$ is a primitive divisor then the difference $\Delta_p(\F,{\mathcal B}_0)-\Delta_p(\F,{\mathcal B}_{\infty})$ is independent of the choosen primitive balanced divisor of separatrices for $\F$ (see \cite[Section 3.6, page 1123]{Fernandez}).

By \cite[Proposition 4.6]{Genzmer-Mol}, $\Delta_p(\F,B)$ is a non-negative integer number  for any irreducible component $B$ of $\B_{0}$. As a consequence  $\Delta_p(\F,\B)$ is also a non-negative integer number for any effective divisor of separatrices $\B$.

\par The \textit{Milnor number} $\mu_{p}(\F)$ of the foliation $\F$ at $p$ given by the $1$-form $\omega=P(x,y)dx+Q(x,y)dy$ is defined by 
\[ \mu_p(\F)=i_p(P,Q).\] 
Remember that we consider  $P$ and $Q$  coprime, so $\mu_p(\F)$ is a non negative integer. In \cite[Theorem A]{CLS} it was proved that the Milnor number of a foliation is  a topological invariant. 
\par On the other hand, the \textit{Milnor number} $\mu_{p}(C)$ at  $p$ of a plane curve $C$ (non necessary irreducible) with equation $f(x,y)=0$  is 
\[ \mu_p(C)=i_p\left(\partial_{x} f,\partial_{y} f\right).\] 
Observe that $\mu_p(C)$ is finite if and only if $f$ has not multiple factors, that is, the curve $C$ is reduced.\\

\par  Generalized curve foliations have a property of minimization of Milnor numbers and are characterized, in the non-dicritical case, by several authors, see for instance \cite[Proposition 7]{index} and \cite[Th\'eor\`eme 3.3]{Cav-Le}. Recently, in \cite[Theorem A]{Genzmer-Mol}, the authors have characterized singular generalized curve foliations in terms of its polar excess number as follows: let $\F$ be a singular foliation at $(\C^2,p)$ and let $\B=\B_0-\B_{\infty}$ be a balanced divisor of separatrices for $\F$, then $\Delta_{p}(\F,\B_{0})=0$ if and only if $\F$ is a generalized curve foliation. \\

\par The following proposition provides a formula to compute the polar excess number of a foliation with respect to the zero divisor of a reduced balanced divisor of separatrices.

\begin{proposition}\label{prop-B}
Let $\F$ be a  singular foliation at $(\C^2,p)$ and let $\B=\B_0-\B_{\infty}$ be a reduced balanced divisor of separatrices for $\F$. Then
\[
\Delta_p(\F,\B_{0})=i_{p}(\mathcal P^{\F},\B_{0})+i_{p}(\B_{0},\B_{\infty})-\mu_p(\B_{0})-\nu_p(\B_{0})+1.
\]

\noindent Moreover, $\F$ is a generalized curve foliation if and only if 
\[
i_{p}(\mathcal P^{\F},\B_{0})=\mu_p(\B_{0})+\nu_p(\B_{0})-i_{p}(\B_{0},\B_{\infty})-1.
\]

\end{proposition}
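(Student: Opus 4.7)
The plan is to reduce the computation of the polar excess number to Teissier's classical formula for the generic polar of a reduced plane curve, after carefully accounting for the meromorphic correction coming from $\B_\infty$.

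\textbf{Step 1: set up.} Since $\B=\B_0-\B_\infty$ is reduced, choose a reduced balanced equation $F=F_0/F_\infty$ for $\B$, where $F_0,F_\infty\in\C[[x,y]]$ define the effective divisors $\B_0$ and $\B_\infty$ respectively. For any irreducible component $B$ of $\B_0$, the meromorphic function $F$ is a balanced divisor of separatrices for $\F$ adapted to $B$, hence by Lemma \ref{lema: db} and the definition in \eqref{EN:1}--\eqref{END:1},
\[
\Delta_p(\F,\B_0)=i_p(\mathcal P^{\F},\B_0)-i_p(\mathcal P^{dF},\B_0).
\]
Thus the whole task reduces to computing $i_p(\mathcal P^{dF},\B_0)$.

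\textbf{Step 2: compute the generic polar of $dF$.} Writing $dF=(F_\infty\,dF_0-F_0\,dF_\infty)/F_\infty^{2}$, the generic polar $\mathcal P^{dF}$ has formal meromorphic equation
\[
\frac{F_\infty(a\partial_xF_0+b\partial_yF_0)-F_0(a\partial_xF_\infty+b\partial_yF_\infty)}{F_\infty^{2}}=0
\]
for a generic $(a:b)\in\mathbb P^{1}(\C)$. As a divisor, $\mathcal P^{dF}$ equals the divisor of the numerator minus $2\,\B_\infty$.

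\textbf{Step 3: intersect with each component $B:h=0$ of $\B_0$.} Since $\B_0$ is reduced, $h\mid F_0$, so $F_0\equiv 0\pmod{h}$ and the numerator reduces modulo $h$ to $F_\infty(a\partial_xF_0+b\partial_yF_0)$. Using bilinearity of the intersection number,
\[
i_p(\mathcal P^{dF},B)=i_p(F_\infty,B)+i_p(a\partial_xF_0+b\partial_yF_0,B)-2\,i_p(\B_\infty,B)=i_p(\mathcal P^{dF_0},B)-i_p(\B_\infty,B).
\]
Summing over the irreducible components of $\B_0$ yields
\[
i_p(\mathcal P^{dF},\B_0)=i_p(\mathcal P^{dF_0},\B_0)-i_p(\B_0,\B_\infty).
\]

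\textbf{Step 4: apply Teissier's formula.} Since $\B_0$ is a reduced plane curve with defining equation $F_0$, Teissier's classical identity gives
\[
i_p(\mathcal P^{dF_0},\B_0)=\mu_p(\B_0)+\nu_p(\B_0)-1.
\]
Combining Steps 3 and 4 and substituting into the expression of Step 1 produces exactly
\[
\Delta_p(\F,\B_0)=i_p(\mathcal P^{\F},\B_0)+i_p(\B_0,\B_\infty)-\mu_p(\B_0)-\nu_p(\B_0)+1,
\]
which is the first claim.

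\textbf{Step 5: the characterization.} By the Genzmer--Mol characterization cited above the statement (\cite[Theorem A]{Genzmer-Mol}), $\F$ is a generalized curve foliation if and only if $\Delta_p(\F,\B_0)=0$. Setting the right-hand side of the formula equal to zero and solving for $i_p(\mathcal P^{\F},\B_0)$ gives the stated equivalence.

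\textbf{Main obstacle.} The only delicate point is Step 3: one must keep track of the meromorphic pole along $\B_\infty$ and verify that, after reducing modulo an irreducible component $h$ of $\B_0$, the term containing $F_0$ vanishes while the surviving $F_\infty$-factor exactly cancels one copy of the pole $-2\,\B_\infty$. Once this bookkeeping is done, the rest is a direct application of Teissier's formula and of the Genzmer--Mol characterization.
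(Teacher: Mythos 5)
Your proposal is correct and follows essentially the same route as the paper: both reduce $\Delta_p(\F,\B_0)$ to the intersection of $\B_0$ with the generic polar of the meromorphic balanced equation $F_0/F_\infty$, observe that reducing the numerator modulo the equation of $\B_0$ kills the $F_0$-term and leaves the correction $+i_p(\B_0,\B_\infty)$ from the pole, and then conclude with Teissier's formula and the Genzmer--Mol characterization. The only cosmetic difference is that you argue component by component before summing, while the paper works directly with the full reduced equation of $\B_0$; the content is identical.
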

\begin{proof}
Let $\omega=P(x,y)dx+Q(x,y)dy$  be a 1-form inducing $\F$ and $f(x,y)=0$  and $g(x,y)=0$ be the reduced equations of $\B_0$ and $\B_{\infty}$ respectively. Since $\B$ is a set of separatrices of $\F$ adapted to $\B_0$, by \eqref{EN:1} and the definition of the polar curve of $d(f/g)$ we have
\begin{eqnarray*}
\Delta_p(\F,\B_{0})&=& i_{p}(\mathcal P^{\F},\B_{0})-i_{p}(\mathcal P^{d(f/g)},\B_{0})\\
&=& i_{p}(\mathcal P^{\F},\B_{0})-i_p\left(\frac{(g\partial_{x}f-f\partial_{x}g)a+(g\partial_{y}f-f\partial_{y}g)b}{g^{2}},f\right)\\
&=& i_{p}(\mathcal P^{\F},\B_{0})-i_p\left(\frac{g(a\partial_{x}f+b\partial_{y}f)-f(a\partial_{x}g+b\partial_{y}g)}{g^{2}},f\right)
\end{eqnarray*}
where $(a:b)\in\mathbb{P}^1$. By applying properties on intersection numbers and Teissier's Proposition \cite[Chapter II, Proposition 1.2]{Teissier}, we get  
\begin{eqnarray*}
\Delta_p(\F,\B_{0})&=& i_{p}(\mathcal P^{\F},\B_{0})-i_p(a\partial_{x}f+b\partial_{y}f,f)+i_{p}(g,f)\\
&=&  i_{p}(\mathcal P^{\F},\B_{0})+i_p(\B_0,\B_{\infty})-\mu_p(\B_{0})-\nu_p(\B_{0})+1.
\end{eqnarray*}

The second part of the proposition follows from the first part and the characterization of generalized curve foliations given in  \cite[Theorem A]{Genzmer-Mol}.
\end{proof}
The second part of Proposition \ref{prop-B} generalizes \cite[Proposition 2]{Cano} as it does not have the restriction of non-dicriticality.

We give a numerical illustration of Proposition \ref{prop-B}.
\begin{example} 
  Let $\F$ be the foliation at $(\C^2,0)$ defined by 
$\omega=xdy-ydx.$
Observe that $\F$ is a dicritical generalized curve foliation called the {\it radial foliation}. It has only one dicritical component whose valence is $0$ in its reduction process of singularities. Thus $\B=(x)+(y)+(x-y)-(x+y)$ is a reduced balanced divisor of separatrices for $\F$, where $\B_0: xy(x-y)=0$ and $\B_{\infty}: x+y=0$. We get $\mu_0(\B_0)=4$, $\nu_0(\B_0)=3$,
$i_0(\B_0,\B_{\infty})=3$ and $i_0(\mathcal P^{\F},\B_{0})=3.$ Observe that if we consider the reduced balanced divisor of separatrices $\B=(x)+(y)$ for $\F$ then $\B_{\infty}$ is a unit, so  $i_{p}(\B_{0},\B_{\infty})=0$ and now  $\mu_0(\B_0)=1$, $\nu_0(\B_0)=2$. On the other hand, if we consider the foliation $\F$ with a saddle-node (so $\F$ is not a generalized curve foliation) and equation as in \eqref{saddle-node-formal} we get 
again $\B=(x)+(y)$ but  $i_{p}(\mathcal P^{\F},\B_{0})=k+2\neq 2$.

\end{example}

\begin{lemma}\label{index_2}
Let $\F$ be a singular foliation at $(\mathbb C^{2},p)$ and let $\B$ be a balanced divisor of separatrices for $\F$. Then
\[i_p(\mathcal{P}^{\F},\B)=\mu_p(\F)+\nu_p(\F)-\sum_{q\in\cl{I}_{p}(\F)}\nu_q(\F)\xi_q(\F),\]
where the summation runs over all infinitely near points of $\F$ at $p$.
\end{lemma}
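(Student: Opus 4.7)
The plan is to prove the lemma by induction on the length of the reduction of singularities of $\F$ at $p$. The base case is when $p$ is already reduced, in which case $\cl{I}_p(\F)=\{p\}$, $\xi_p(\F)=0$, and $\nu_p(\F)=1$, so the right-hand side collapses to $\mu_p(\F)+1$. Using the normal forms \eqref{non-degenerate} and \eqref{saddle-node-formal}, the balanced divisor is $(x)+(y)$ and the generic polar is smooth. In the non-degenerate case $\mathcal{P}^{\F}$ meets both axes transversely with intersection $1+1=2=\mu_p(\F)+1$; in the saddle-node case with $P=-y(1+\lambda x^k)$, $Q=x^{k+1}$ one computes $i_p(\mathcal{P}^{\F},(x))+i_p(\mathcal{P}^{\F},(y))=1+(k+1)=\mu_p(\F)+\nu_p(\F)$.

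For the inductive step, let $\sigma:(\tilde X,\D)\to(\C^2,p)$ be the blow-up at $p$ and let $q_1,\ldots,q_k$ be the singularities of $\tilde\F$ on $\D$. Since $\cl{I}_p(\F)=\{p\}\sqcup\bigsqcup_j\cl{I}_{q_j}(\tilde\F)$, the sum on the right-hand side splits as
\[
\sum_{q\in\cl{I}_p(\F)}\nu_q(\F)\xi_q(\F)=\nu_p(\F)\xi_p(\F)+\sum_j\sum_{r\in\cl{I}_{q_j}(\tilde\F)}\nu_r(\tilde\F)\xi_r(\tilde\F),
\]
and by induction each inner sum equals $\mu_{q_j}(\tilde\F)+\nu_{q_j}(\tilde\F)-i_{q_j}(\mathcal{P}^{\tilde\F},\tilde\B_j)$, where $\tilde\B_j$ is the local balanced divisor for $\tilde\F$ at $q_j$ coming from the strict transform of $\B$. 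On the left-hand side I would apply Noether's formula $i_p(\mathcal{P}^{\F},\B)=\nu_p(\mathcal{P}^{\F})\nu_p(\B)+\sum_j i_{q_j}(\tilde{\mathcal{P}^{\F}},\tilde\B_j)$, using $\nu_p(\mathcal{P}^{\F})=\nu_p(\F)$ and Proposition \ref{prop:Equa-Ba} to write $\nu_p(\B)=\nu_p(\F)+1-\xi_p(\F)$.

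The two sides are then reconciled with a blow-up identity for the Milnor number, obtained by applying Noether's formula to $i_p(P,Q)=\mu_p(\F)$ and analyzing the tangent-cone polynomial $\phi(t)=P_{\nu}(1,t)+tQ_{\nu}(1,t)$ that determines the singular locus of $\tilde\F$ on $\D$ (with an analogous analysis in the second chart and a parallel one with an extra factor of $x$ for dicritical blow-ups). A key technical step is to replace $i_{q_j}(\tilde{\mathcal{P}^{\F}},\tilde\B_j)$ by $i_{q_j}(\mathcal{P}^{\tilde\F},\tilde\B_j)$ at every $q_j$: although the strict transform of a generic polar of $\F$ is in general not a generic polar of $\tilde\F$, both curves are generic members of equisingular linear families of non-$\tilde\F$-invariant branches and so have the same intersection with each component of $\tilde\B_j$ (by the same argument used in Lemma \ref{lema: db}).

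The main obstacle is precisely this last identification of intersection numbers across blow-ups, together with the parallel handling of the dicritical case, where the transformation factor for $\sigma^{*}\omega$ shifts from $x^{\nu_p(\F)}$ to $x^{\nu_p(\F)+1}$ and $\mathcal{P}^{\F}$ can acquire components along the exceptional divisor. Once these technical matches are in place, assembling the inductive hypothesis with the blow-up identities for $\mu_p(\F)$, $\nu_p(\B)$ and the polar intersection produces exactly the claimed formula.
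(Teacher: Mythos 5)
Your induction scheme hinges on a step that is not merely unproven but false as stated: the replacement of $i_{q_j}(\sigma^{*}\mathcal{P}^{\F},\tilde\B_j)$ by $i_{q_j}(\mathcal{P}^{\tilde\F},\tilde\B_j)$. Being generic members of two \emph{different} linear families does not force equal contact with a fixed invariant branch. Already for the cusp foliation $\omega=d(y^2-x^3)=2y\,dy-3x^2\,dx$ this fails: blowing up with $y=xt$ gives $\tilde\omega=(2t^2-3x)\,dx+2xt\,dt$ and the strict transform $\tilde B: t^2=x$ of the separatrix; the strict transform of the generic polar $-3ax^2+2by=0$ is the line $2bt-3ax=0$, which meets $\tilde B$ with multiplicity $1$, while a generic polar of $\tilde\F$, namely $a'(2t^2-3x)+2b'xt=0$, meets $\tilde B$ with multiplicity $2$. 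So the ``same argument as Lemma \ref{lema: db}'' (which compares two balanced \emph{equations} adapted to the same branch, not a transformed polar with a polar of the transform) does not apply, and the discrepancy it leaves behind is exactly the kind of term your assembly would have to track. A second structural gap: the strict transform of a balanced divisor $\B$ at $p$ is in general \emph{not} a balanced divisor for $\tilde\F$ at $q_j$ — after a non-dicritical blow-up the local branch of the exceptional divisor through $q_j$ is an isolated separatrix of $\tilde\F$ and must be added, and after a dicritical blow-up the valence conditions change — so the inductive hypothesis cannot be invoked for your $\tilde\B_j$. In the cusp example the true balanced divisor at $q$ is $(x)+(t^2-x)$ and the lemma there gives $4$, not the $2$ (or $1$) your bookkeeping would feed into the recursion. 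These two defects may partially compensate, but that compensation is precisely the content of the proof, and the proposal only asserts that ``assembling'' works.

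Note that the paper avoids any induction: it writes the generic polar as a union of non-$\F$-invariant branches $A$, applies to each the Cabrera--Mol identity $i_p(\B,A)=\tang_p(\F,A)-\sum_{q\in\cl{I}_p(\F)}\nu_q(A)\xi_q(\F)+1$, and then uses two facts from Cano--Corral--Mol: $\sum_{A}(\tang_p(\F,A)+1)=\mu_p(\F)+\nu_p(\F)$ and the equimultiplicity $\nu_q(\mathcal{P}^{\F})=\nu_q(\F)$ at all infinitely near points. If you want to salvage a direct inductive proof, you would essentially have to reprove those blow-up statements (behaviour of $\tang$, of the polar's multiplicities, and of balanced divisors under one blow-up, in both the dicritical and non-dicritical cases), which is the material your outline leaves open.
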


\begin{proof}
Let $\mathcal{P}^{\F}$ be a generic polar. Denote by $\Gamma(\mathcal{P}^{\F})$ the set of irreducible components of $\mathcal{P}^{\F}$. It follows from \cite[Lemma 4.2]{Cabrera-Mol} that
\begin{eqnarray}
i_p(\mathcal{P}^{\F},\B)&=&\sum_{A\in\Gamma(\mathcal{P}^{\F})}i_p(A,\B)\nonumber\\
&=&\sum_{A\in\Gamma(\mathcal{P}^{\F})}\left(\tang_p(\F,A)-\sum_{q\in\cl{I}_{p}(\F)}\nu_q(A)\xi_q(\F)+1\right)\nonumber\\
&=&\sum_{A\in\Gamma(\mathcal{P}^{\F})}(\tang_p(\F,A)+1)-\sum_{A\in\Gamma(\mathcal{P}^{\F})}\left(\sum_{q\in\cl{I}_{p}(\F)}\nu_q(A)\xi_q(\F)\right)\nonumber\\
&=&\sum_{A\in\Gamma(\mathcal{P}^{\F})}(\tang_p(\F,A)+1)-\sum_{q\in\cl{I}_{p}(\F)}\left(\sum_{A\in\Gamma(\mathcal{P}^{\F})}\nu_q(A)\right)\xi_q(\F)\nonumber\\
&=&\sum_{A\in\Gamma(\mathcal{P}^{\F})}(\tang_p(\F,A)+1)-\sum_{q\in\cl{I}_{p}(\F)}\nu_q(\mathcal{P}^{\F})\xi_q(\F).\label{eq_3}
\end{eqnarray}
According to the proof of \cite[Proposition 2]{Cano}, we have 
\[\sum_{A\in\Gamma(\mathcal{P}^{\F})}(\tang_p(\F,A)+1)=\mu_p(\F)+\nu_p(\F),\] and by \cite[Remark 1]{Cano}
 $\nu_q(\mathcal{P}^{\F})=\nu_q(\F)$. Substituting these terms in the equation (\ref{eq_3}), we obtain
\[i_p(\mathcal{P}^{\F},\B)=\mu_p(\F)+\nu_p(\F)-\sum_{q\in\cl{I}_{p}(\F)}\nu_q(\F)\xi_q(\F).\]

\end{proof}

Lemma \ref{index_2} improves \cite[Proposition 2]{Cano} determining explicitly  the difference between the polar intersection number with respect to a balanced divisor of separatrices $\B$ and the  sum of the Milnor  number and the algebraic multiplicity of the foliation  $\F$. It also generalizes the result to dicritical foliations. On the other hand
comparing Lemma \ref{index_2} and  \cite[Proposition 4.3]{Cabrera-Mol}   (proved for complex analytic foliations, but it also holds for formal foliations) we conclude that
\[
\sum_{q\in\cl{I}_{p}(\F)}\nu_q(\F)\xi_q(\F)=\sum_{q\in\cl{I}_{p}(\F)}\nu_q(B)\xi_q(\F),
\]
for any $B$ which is not an $\F$-invariant curve. Hence the sum $\sum_{q\in\cl{I}_p(\F)}\nu_q(\F)\xi_q(\F)$ coincides with the {\it tangency excess of $\F$ along any irreducible curve} which is not an $\F$-invariant curve, introduced in \cite[equality (8)]{Cabrera-Mol}. In particular, after the definition of the $\chi$-number, this tangency excess equals $\chi_p(\F)+\xi_p(\F)$. \\

\par Let $\F$ be a singular foliation at $(\C^2,p)$ induced by the vector field $v$ and $B$ be a separatrix of $\F$. Let $\gamma:(\C,0)\to(\C^2,p)$ be a primitive parametrization of $B$, we can consider the \textit{multiplicity of $\F$ along $B$ at $p$} defined by 
\begin{equation}\label{eq_mult}
\mu_p(\F,B)=\ord_t \theta(t),
\end{equation}
where $\theta(t)$ is the unique vector field at $(\C,0)$ such that 
$\gamma_{*} \theta(t)=v\circ\gamma(t)$, see for instance \cite[page 159]{CLS}. If $\omega=P(x,y)dx+Q(x,y)dy$ is a 1-form inducing $\F$ and $\gamma(t)=(x(t),y(t))$, we get 
\begin{equation}\label{mult_2}
\theta(t)=
\begin{cases}
-\frac{Q(\gamma(t))}{x'(t)} & \text{if $x(t)\neq 0$}
\medskip \\
 \frac{P(\gamma(t))}{y'(t)} & \text{if $y(t)\neq 0$}.
\end{cases}
\end{equation}
Hence, by taking orders we obtain
\begin{equation}
\mu_p(\F,B)=
\begin{cases}
\ord_t Q(\gamma(t))-\ord_t x(t)+1 & \text{if $x(t)\neq 0$};
\medskip \\
 \ord_t P(\gamma(t))-\ord_t y(t)+1 & \text{if $y(t)\neq 0$}.
\end{cases}
\end{equation}

The following proposition has been proved in \cite[Proposition 1]{Cano} for non-dicritical foliations, but we may check that it is valid for dicritical foliations.

\begin{proposition}\label{cano_pro}
Consider a separatrix $B$ of a singular (holomorphic or formal) foliation $\F$ at $(\C^2,p)$. We have 
\[i_p(\mathcal{P}^\F,B)=\mu_p(\F,B)+\nu_p(B)-1.\]
\end{proposition}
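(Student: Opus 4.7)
The plan is to evaluate $i_p(\mathcal{P}^{\F},B)$ directly from a primitive parametrization of $B$, using the invariance relation satisfied by the separatrix. Choose linear coordinates around $p$ so that $B$ is not tangent to the $y$-axis (a generic linear change suffices, and both sides of the claimed formula are coordinate invariant). Fix a primitive Puiseux parametrization $\gamma(t)=(x(t),y(t))$ of $B$; by the coordinate choice, $x(t)\not\equiv 0$ and $\nu_p(B)=\ord_t x(t)=:n\le \ord_t y(t)$. Writing $\F$ as $\omega=P\,dx+Q\,dy$ with $P,Q$ coprime and picking $(a:b)\in\mathbb{P}^1$ generic, the defining series of $B$ does not divide $aP+bQ$, so that
\[
i_p(\mathcal{P}^{\F}_{(a:b)},B)=\ord_t\bigl(aP(\gamma(t))+bQ(\gamma(t))\bigr).
\]

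The key manipulation uses $\gamma^{*}\omega=0$, that is $P(\gamma(t))x'(t)+Q(\gamma(t))y'(t)=0$. Solving for $P(\gamma(t))$ and substituting gives
\[
aP(\gamma(t))+bQ(\gamma(t))=\frac{Q(\gamma(t))}{x'(t)}\bigl(bx'(t)-ay'(t)\bigr),
\]
and therefore
\[
\ord_t\bigl(aP(\gamma(t))+bQ(\gamma(t))\bigr)=\ord_t Q(\gamma(t))+\ord_t\bigl(bx'(t)-ay'(t)\bigr)-\ord_t x'(t).
\]
Since $\ord_t x'(t)=n-1\le\ord_t y'(t)$, for a generic $(a:b)$ the order of $bx'(t)-ay'(t)$ attains the minimum $n-1$, and the last two terms cancel. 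Combining this with formula \eqref{mult_2}, namely $\ord_t Q(\gamma(t))=\mu_p(\F,B)+\ord_t x(t)-1=\mu_p(\F,B)+\nu_p(B)-1$, I obtain the identity.

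The mild technical point, and the place where dicriticalness of $\F$ could a priori intervene, is the justification of the two genericity hypotheses on $(a:b)$. First, $B$ must not be a component of $\mathcal{P}^{\F}_{(a:b)}$: this would require the irreducible series of $B$ to divide $aP+bQ$, which, since $P$ and $Q$ are coprime, happens for at most one ratio $(a:b)$. Second, $\ord_t(bx'(t)-ay'(t))$ must attain the minimum of $\ord_t x'(t)$ and $\ord_t y'(t)$; this fails only when the leading coefficients of $ay'(t)$ and $bx'(t)$ coincide, again at a single exceptional ratio. Both obstructions are avoided outside a finite set in $\mathbb{P}^1$, so the generic polar realizes the claimed intersection number. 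The argument uses only power-series identities, hence it applies verbatim in the formal setting, and it makes no use of finiteness of $\sep_p(\F)$, which is precisely why Cano's non-dicritical statement extends to dicritical foliations.
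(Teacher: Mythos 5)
Your proof is correct, and it is essentially the standard argument: the paper does not reprove this proposition but cites Cano--Corral--Mol and merely observes that it persists for dicritical and formal foliations, and your computation (pulling back $aP+bQ$ along a primitive parametrization, using $\gamma^{*}\omega=0$ to replace $P\circ\gamma$ by $-Q(\gamma)y'/x'$, and discarding finitely many ratios $(a:b)$ for the two genericity conditions) is exactly the proof behind that citation; your closing remark that nothing uses finiteness of $\sep_p(\F)$ is precisely why the extension claimed in the paper holds. One step you leave implicit is that $Q(\gamma(t))\not\equiv 0$, which is needed for \eqref{mult_2} to produce a finite order: if $Q(\gamma)\equiv 0$, then the invariance relation $P(\gamma)x'+Q(\gamma)y'=0$ together with $x'\not\equiv 0$ would give $P(\gamma)\equiv 0$ as well, contradicting the coprimality of $P$ and $Q$, so this is harmless but worth stating.
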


\begin{remark}\label{nonnegative}
Let $\F$ be a singular foliation at $(\mathbb C^2,p)$. Assume that $\F$ is non-dicritical and $C=\cup_{j=1}^{\ell}C_j$ is the total union of separatrices of $\F$. Applying Proposition \ref{cano_pro} to $\F$ and $df$, where $C:f(x,y)=0$, we get $\Delta(\F,C_{j})=\mu_p(\F,C_j)-\mu_p(df,C_j),$ for $j=1, \ldots, \ell$.
Since $\Delta(\F,C_{j})\geq 0$ we have $\mu_p(\F,C_j)\geq \mu_p(df,C_j)$ for any separatrix $C_{j}$ of  $\F$.
\end{remark}

As a consequence of Lemma \ref{index_2} and Proposition \ref{cano_pro} we obtain a generalization of \cite[Corollary 2]{Cano}.
\begin{proposition}
\label{Milnor and chi}
Let $\F$ be a singular (holomorphic or formal) foliation at $(\mathbb C^{2},p)$ and let  $\B=\sum_{B}a_BB$ be a balanced divisor for separatrices of $\F$. We have
\[\mu_p(\F)=\sum_{B}a_B\mu_p(\F,B)+\chi_p(\F)-\deg(\B)+1.\]
\end{proposition}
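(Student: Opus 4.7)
The plan is to combine Lemma \ref{index_2}, Proposition \ref{cano_pro}, and the balanced-divisor identity from Proposition \ref{prop:Equa-Ba}, so that everything collapses to the desired formula after using the definition of $\chi_p(\F)$.

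First I would compute the polar intersection number $i_p(\mathcal{P}^{\F},\B)$ in two different ways. On the one hand, Lemma \ref{index_2} gives
\[
i_p(\mathcal{P}^{\F},\B)=\mu_p(\F)+\nu_p(\F)-\sum_{q\in\cl{I}_p(\F)}\nu_q(\F)\xi_q(\F).
\]
On the other hand, by bilinearity of the intersection multiplicity we have $i_p(\mathcal{P}^{\F},\B)=\sum_B a_B\, i_p(\mathcal{P}^{\F},B)$, and for each separatrix $B$ in the support of $\B$, Proposition \ref{cano_pro} yields $i_p(\mathcal{P}^{\F},B)=\mu_p(\F,B)+\nu_p(B)-1$. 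Summing with the weights $a_B$ gives
\[
i_p(\mathcal{P}^{\F},\B)=\sum_{B}a_B\mu_p(\F,B)+\nu_p(\B)-\deg(\B),
\]
since $\nu_p(\B)=\sum_B a_B\nu_p(B)$ and $\deg(\B)=\sum_B a_B$.

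Next I would rewrite the error term from Lemma \ref{index_2} using the very definition of the $\chi$-number, namely
\[
\sum_{q\in\cl{I}_p(\F)}\nu_q(\F)\xi_q(\F)=\chi_p(\F)+\xi_p(\F).
\]
Equating the two expressions for $i_p(\mathcal{P}^{\F},\B)$ and substituting this yields
\[
\mu_p(\F)+\nu_p(\F)-\chi_p(\F)-\xi_p(\F)=\sum_{B}a_B\mu_p(\F,B)+\nu_p(\B)-\deg(\B).
\]

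The final step is to eliminate the algebraic multiplicities using Proposition \ref{prop:Equa-Ba}, which gives $\nu_p(\F)-\xi_p(\F)=\nu_p(\B)-1$. The $\nu_p(\B)$ terms cancel on both sides and, after isolating $\mu_p(\F)$, the identity collapses to
\[
\mu_p(\F)=\sum_{B}a_B\mu_p(\F,B)+\chi_p(\F)-\deg(\B)+1,
\]
which is the claim. No genuine obstacle is expected here: the argument is essentially a bookkeeping of known formulas, and the only subtlety is recognizing that the sum $\sum_q \nu_q(\F)\xi_q(\F)$ appearing in Lemma \ref{index_2} is precisely $\chi_p(\F)+\xi_p(\F)$, so that the tangency excess $\xi_p(\F)$ cancels against the one hidden inside $\nu_p(\F)-(\nu_p(\B)-1)$ via Proposition \ref{prop:Equa-Ba}.
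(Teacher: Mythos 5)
Your argument is correct and coincides with the paper's own proof: both compute $i_p(\mathcal{P}^{\F},\B)$ once via Lemma \ref{index_2} and once componentwise via Proposition \ref{cano_pro}, then eliminate $\nu_p(\F)$ and $\xi_p(\F)$ using Proposition \ref{prop:Equa-Ba} together with the definition of $\chi_p(\F)$. The bookkeeping, including the identity $\sum_{q}\nu_q(\F)\xi_q(\F)=\chi_p(\F)+\xi_p(\F)$ and the cancellation of $\nu_p(\B)$, is exactly as in the paper.
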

\begin{proof}
By summing up polar intersection numbers over all irreducible components of $\B$ and applying Proposition \ref{cano_pro}, we get 
\begin{eqnarray*}
i_p(\mathcal{P}^{\F},\B)&=&\sum_{B}a_Bi_p(\mathcal{P}^{\F},B)=\sum_{B}a_B(\mu_p(\F,B)+\nu_p(B)-1)\\
&=& \sum_{B}a_B\mu_p(\F,B)+\left(\sum_{B}a_B\nu_p(B)\right)-\deg(\B)\\
&=& \sum_{B}a_B\mu_p(\F,B)+\nu_p(\B)-\deg(\B).
\end{eqnarray*}

From Lemma \ref{index_2}, Proposition \ref{prop:Equa-Ba}  and the definition of the $\chi$-number of $\F$ we get

\begin{eqnarray*}
\mu_p(\F)&=& \sum_{B}a_B\mu_p(\F,B)+\sum_{q\in\cl{I}_{p}(\F)}\nu_q(\F)\xi_q(\F)-\nu_p(\F)+\nu_p(\B)-\deg(\B)\\
&=& \sum_{B}a_B\mu_p(\F,B)+\sum_{q\in\cl{I}_{p}(\F)}\nu_q(\F)\xi_q(\F)-\xi_p(\F)-\deg(\B)+1\\
&=& \sum_{B}a_B\mu_p(\F,B)+\chi_p(\F)-\deg(\B)+1.
\end{eqnarray*}
\end{proof}

\par Let $f(x_{1},\ldots, x_{n})\in \mathbb C[x_{1},\ldots, x_{n}]$ be a polynomial, where  the origin is  an isolated singular point of the hypersurface $f^{-1}(0)$. 
The notion of the Milnor number $\mu(f)$ was introduced  in \cite[Section 7]{Milnor} as the degree of the mapping $z\to \frac{\nabla(f)}{\vert\vert \nabla(f)\vert\vert }$, where $\nabla$ denotes the gradient function. In particular, for complex plane curves, Milnor proved, using topological tools, the purely algebraic equality $\mu(f)=2\delta(f)-r(f)+1$, where $\delta(f)$ is the {\em number of double points} and $r(f)$ is the number of irreducible factors of $f$ (see \cite[Theorem 10.5]{Milnor}). The reader can find further formulae for the Milnor number of a plane curve in  \cite[Section 6.5]{Wall}). In particular in \cite[Theorem 6.5.1]{Wall}) it was established the relationship between the Milnor number of a reduced plane curve and the Milnor numbers of its irreducible components. The ingredients of the proof of Wall are Milnor fibrations and the Euler characteristic. We give another proof of this relationship, using foliations:
\begin{proposition}\label{Milnor formula}
Let $C:f(x,y)=0$ be a germ of reduced singular curve at $(\C^2,p)$. Assume that $C=\cup_{j=1}^{\ell}C_{j}$ is the decomposition of $C$ in irreducible components $C_{j}: f_j(x,y)=0$, where $f(x,y)=f_1(x,y)\cdots f_{\ell}(x,y)$. Then 
\[\mu_p(C)+\ell-1=\sum_{j=1}^{\ell}\mu_p(C_{j})+2\sum_{1\leq i<j\leq \ell}i_p(C_i,C_j).\]
\end{proposition}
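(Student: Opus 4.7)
The strategy is to apply Proposition \ref{Milnor and chi} to the Hamiltonian foliation $\F$ defined by $\omega = df$. First I would justify that $\F$ is well behaved with respect to $C$. Since $f = f_1 \cdots f_\ell$ is reduced, each $C_j: f_j=0$ is a separatrix of $\F$ (because $df \wedge df_j$ is divisible by $f_j$, as one checks from the Leibniz rule) and in fact these are all the separatrices of $\F$, so $\F$ is non-dicritical with total union of separatrices $C$. Moreover, a Hamiltonian foliation has no saddle-nodes in its reduction process, hence it is a generalized curve foliation; in particular, $\xi_q(\F)=0$ for every $q\in\cl{I}_p(\F)$ and Proposition \ref{prop_chi} gives $\chi_p(\F)=0$. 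Using $\mu_p(\F)=i_p(\partial_x f,\partial_y f)=\mu_p(C)$ and $\deg(\B)=\ell$ for $\B=\sum_{j=1}^{\ell}C_j$, Proposition \ref{Milnor and chi} yields
\[
\mu_p(C)\;=\;\sum_{j=1}^{\ell}\mu_p(df,C_j)\;-\;\ell+1.
\]

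Next, I would compute each $\mu_p(df,C_j)$ by comparing it with $\mu_p(df_j,C_j)$. Take a primitive parametrization $\gamma_j(t)=(x(t),y(t))$ of $C_j$. By Leibniz,
\[
\partial_x f \;=\; \sum_{k=1}^{\ell}\Big(\prod_{i\neq k} f_i\Big)\,\partial_x f_k,
\]
and evaluating on $\gamma_j$ all terms with $k\neq j$ vanish because they contain the factor $f_j(\gamma_j(t))\equiv 0$. Hence
\[
\partial_x f(\gamma_j(t))\;=\;\Big(\prod_{i\neq j} f_i(\gamma_j(t))\Big)\,\partial_x f_j(\gamma_j(t)),
\]
and taking orders with the definition \eqref{eq_mult}--\eqref{mult_2} (or the analogous formula with $\partial_y f$) gives
\[
\mu_p(df,C_j)-\mu_p(df_j,C_j)\;=\;\sum_{i\neq j} i_p(C_i,C_j).
\]

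To finish I need $\mu_p(df_j,C_j)$. Applying Proposition \ref{Milnor and chi} once more to the Hamiltonian foliation $df_j=0$, whose unique separatrix is $C_j$ and which is again a generalized curve foliation, yields $\mu_p(df_j)=\mu_p(df_j,C_j)+0-1+1$, that is, $\mu_p(df_j,C_j)=\mu_p(C_j)$. Substituting everything back,
\[
\mu_p(C)\;=\;\sum_{j=1}^{\ell}\Big(\mu_p(C_j)+\sum_{i\neq j} i_p(C_i,C_j)\Big)-\ell+1
\;=\;\sum_{j=1}^{\ell}\mu_p(C_j)+2\sum_{1\leq i<j\leq \ell} i_p(C_i,C_j)-\ell+1,
\]
which is the desired identity.

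The only delicate point—so the main ``obstacle''—is the clean bookkeeping that identifies $\mu_p(df,C_j)$: one must be careful that the formula \eqref{mult_2} is applied with a coherent choice ($x(t)\neq 0$ or $y(t)\neq 0$) and that the same choice is used for $df$ and for $df_j$ so that the terms $\ord_t x'(t)$ (or $\ord_t y'(t)$) cancel in the subtraction. Everything else is a direct application of Proposition \ref{Milnor and chi} together with the fact that Hamiltonian foliations are non-dicritical generalized curve foliations.
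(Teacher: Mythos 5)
Your proof is correct, and its skeleton coincides with the paper's: both arguments apply Proposition \ref{Milnor and chi} to the Hamiltonian foliation $df$ with the balanced divisor $C=\sum_{j=1}^{\ell}C_j$, using (as you make explicit and the paper leaves implicit) that $df$ is a non-dicritical generalized curve foliation, so $\chi_p(df)=0$ and $\mu_p(df)=\mu_p(C)$. Where you diverge is in evaluating $\mu_p(df,C_j)$. The paper converts it into a polar intersection number via Proposition \ref{cano_pro}, splits $i_p(\mathcal{P}^{df},C_j)=i_p(\mathcal{P}^{df_j},C_j)+\sum_{i\neq j}i_p(C_i,C_j)$ by properties of intersection numbers, and then invokes Teissier's formula $i_p(\mathcal{P}^{df_j},C_j)=\mu_p(C_j)+\nu_p(C_j)-1$; you instead obtain $\mu_p(df,C_j)-\mu_p(df_j,C_j)=\sum_{i\neq j}i_p(C_i,C_j)$ directly from the definition \eqref{eq_mult}--\eqref{mult_2} and the Leibniz rule along a parametrization of $C_j$, and then get $\mu_p(df_j,C_j)=\mu_p(C_j)$ by a second application of Proposition \ref{Milnor and chi}. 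Your route is more elementary and self-contained (no polar curves, no Teissier), at the cost of the chart bookkeeping you correctly flag; the paper's route stays entirely inside the polar-invariant machinery it has already developed. One small caveat: if some branch $C_j$ is smooth at $p$, the foliation $df_j$ is regular there, so Proposition \ref{Milnor and chi} (stated for singular foliations) does not literally apply to it; the needed equality $\mu_p(df_j,C_j)=\mu_p(C_j)=0$ is then immediate from \eqref{mult_2} (or one can fall back on Teissier's formula as the paper does), so this is harmless but worth a sentence.
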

\begin{proof}
Applying Proposition \ref{Milnor and chi} to the foliation defined by $\omega=df$ and to the balanced divisor of separatrices $C=\sum_{j=1}^{\ell}C_j$ we have 
\begin{equation}\label{eq_milnor}
\mu_p(C)+\ell-1=\sum_{j=1}^{\ell}\mu_p(df,C_j).
\end{equation}
It follows from Proposition \ref{cano_pro} that
\[\mu_p(df,C_j)=i_p(\mathcal{P}^{df},C_j)-\nu_p(C_j)+1,\,\,\,\,\,\,\hbox{\rm for} \,\,j=1,\ldots,\ell.\]
Using properties on intersection numbers, we have
\[i_p(\mathcal{P}^{df},C_j)=i_p(\mathcal{P}^{df_j},C_j)+\sum_{i\neq j}i_p(C_i,C_j).\] 
From Teissier's Proposition \cite[Chapter II, Proposition 1.2]{Teissier}, we get 
\[i_p(\mathcal{P}^{df_j},C_j)=\mu_p(C_j)+\nu_p(C_j)-1.\] Thus 
\begin{equation}\label{eq_polar}
\mu_p(df,C_j)=\mu_p(C_j)+\sum_{i\neq j}i_p(C_i,C_j).
\end{equation}
The proof ends, by substituting (\ref{eq_polar}) in (\ref{eq_milnor}). 
\end{proof}

\begin{maintheorem}\label{Th:Delta-chi}
Let $\F$ be a singular foliation at $(\mathbb C^2,p)$ and let $\B=\sum_{B}a_BB$ be a balanced divisor of separatrices for $\F$. Then
\[
\Delta_p(\F,\B)=\mu_p(\F)-\sum_{B}a_B\mu_p(d{F}_B,B)+ \deg(\B)-1-\chi_p(\F),
\]
where ${F}_B$ is a balanced divisor of separatrices for $\F$ adapted to $B$. Hence, if $\F$ is a foliation of second type, then 
\[
\Delta_p(\F,\B)=\mu_p(\F)-\sum_{B}a_B\mu_p(d{F}_B,B)+ \deg(\B)-1.
\]
\end{maintheorem}
\begin{proof}
By \eqref{END:1} and \eqref{EN:1}  
\begin{eqnarray*}
\Delta_p(\F,{\B})&=&\sum_{B}a_B \Delta_p(\F,B)=\sum_{B}a_B \left( i_p(\mathcal{P}^{\F},B)-i_p(\mathcal{P}^{d{F}_{B}},B)\right)\\
&= &   i_p(\mathcal{P}^{\F},\B)-\sum_{B}a_{B}i_p(\mathcal{P}^{d{F}_{B}},B).
\end{eqnarray*}
Hence, after Lemma \ref{index_2} and Proposition \ref{cano_pro} we have
\begin{eqnarray*}
\Delta_p(\F,{\B})&=&\mu_p(\F)+\nu_p(\F)-\sum_{q\in\cl{I}_{p}(\F)}\nu_q(\F)\xi_q(\F)-\sum_{B}a_{B}\left(\mu_p(d{F}_{B},B)+\nu_p(B)-1 \right)\\
&=&\mu_p(\F)+\nu_p(\F)-\sum_{q\in\cl{I}_{p}(\F)}\nu_q(\F)\xi_q(\F)-\left(\sum_{B}a_{B}\mu_p(d{F}_{B},B)\right) -\nu_p(\B)+\deg(\B).
\end{eqnarray*}

\noindent We finish the proof after Proposition \ref{prop:Equa-Ba} and the definition of the $\chi$-number of $\F$.

\noindent On the other hand if $\F$ is a foliation of second type then $\chi_p(\F)=0$ and the second part of the theorem follows.
\end{proof}

\noindent From Theorem \ref{Th:Delta-chi} and Proposition \ref{Milnor and chi} we get:

\begin{corollary}\label{cor:9}
Let $\F$ be a singular foliation at $(\mathbb C^2,p)$ and let $\B=\sum_{B}a_BB$ be a balanced divisor of separatrices for $\F$. Then

\[
\Delta_p(\F,\B)=\sum_{B}a_B(\mu_p(\F, B)-\mu_p(d{F}_B,B)),
\]
where  ${F}_B$ is a balanced divisor of separatrices for $\F$ adapted to $B$.
\end{corollary}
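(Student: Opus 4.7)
The plan is to simply combine the two results indicated: Theorem \ref{Th:Delta-chi} expresses $\Delta_p(\F,\B)$ as an algebraic combination of $\mu_p(\F)$, the sum $\sum_B a_B \mu_p(dF_B,B)$, $\deg(\B)$, and $\chi_p(\F)$, while Proposition \ref{Milnor and chi} expresses $\mu_p(\F)$ as $\sum_B a_B \mu_p(\F,B) + \chi_p(\F) - \deg(\B) + 1$. The strategy is therefore to substitute the second identity into the first and observe that all the ``correction'' terms cancel.

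More precisely, I would begin by writing out Theorem \ref{Th:Delta-chi} in the form
\[
\Delta_p(\F,\B) \;=\; \mu_p(\F) - \sum_{B} a_B\,\mu_p(dF_B,B) + \deg(\B) - 1 - \chi_p(\F),
\]
and then replace the term $\mu_p(\F)$ by the right-hand side of Proposition \ref{Milnor and chi}. Doing so yields the expression
\[
\Delta_p(\F,\B) \;=\; \Big(\sum_{B} a_B\,\mu_p(\F,B) + \chi_p(\F) - \deg(\B) + 1\Big) - \sum_{B} a_B\,\mu_p(dF_B,B) + \deg(\B) - 1 - \chi_p(\F).
\]

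At this point, the terms $\chi_p(\F)$ and $-\chi_p(\F)$ cancel, and likewise $\deg(\B)-\deg(\B) = 0$ and $1-1=0$. Grouping what remains under a single sum over $B$ gives
\[
\Delta_p(\F,\B) \;=\; \sum_{B} a_B\bigl(\mu_p(\F,B) - \mu_p(dF_B,B)\bigr),
\]
which is exactly the claimed identity. There is no genuine obstacle here since both ingredients are established earlier in the paper; the corollary is a purely formal consequence of the cancellation of the global invariants $\chi_p(\F)$ and $\deg(\B)$ between Theorem \ref{Th:Delta-chi} and Proposition \ref{Milnor and chi}, reflecting the fact that these terms respectively encode the non-second-type defect of $\F$ and a combinatorial correction for the balanced divisor, both of which appear with opposite signs in the two formulas.
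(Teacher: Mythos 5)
Your proof is correct and follows exactly the route the paper takes: the corollary is stated there as an immediate consequence of Theorem \ref{Th:Delta-chi} and Proposition \ref{Milnor and chi}, and your substitution with the cancellation of $\chi_p(\F)$, $\deg(\B)$ and the constant terms is precisely that argument.
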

\par Corollary  \ref{cor:9} restricted to non-dicritical singular foliations provides us a new characterization of non-dicritical generalized curve foliations.

\begin{corollary}
\label{char CG}
Let $\F$ be a singular foliation at $(\mathbb C^2,p)$. Assume that $\F$ is non-dicritical and $C=\cup_{j=1}^{\ell}C_j$ is the total union of separatrices of $\F$. Then
$\F$ is a generalized curve foliation if and only if 
\[\mu_p(\F,C_j)=\mu_p(df,C_j),\,\,\,\,\,\,\,\,\,\,\hbox{\rm for all}\,\,\,\,j=1,\ldots,\ell,\]
where $f(x,y)=0$ is a reduced equation of $C$ at $p$.
\end{corollary}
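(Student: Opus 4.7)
The plan is to deduce this characterization directly from Corollary \ref{cor:9}, combined with the polar excess characterization of generalized curve foliations from \cite[Theorem A]{Genzmer-Mol} and the nonnegativity observation recorded in Remark \ref{nonnegative}.

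First I would specialize Corollary \ref{cor:9} to the setting of a non-dicritical foliation $\F$ whose total union of separatrices is $C=\cup_{j=1}^{\ell}C_{j}$. Since $\F$ is non-dicritical, $\mathrm{Dic}_{p}(\F)=\emptyset$, and hence the primitive reduced divisor $\B=\sum_{j=1}^{\ell}C_{j}$ is a balanced divisor of separatrices for $\F$ with $\B_{0}=C$ and $\B_{\infty}=0$. Moreover, for every component $B=C_{j}$, a balanced divisor adapted to $B$ can be taken to be the reduced equation $f=f_{1}\cdots f_{\ell}$ of $C$, so $F_{B}=f$ uniformly. Corollary \ref{cor:9} then yields
\[
\Delta_{p}(\F,C)=\sum_{j=1}^{\ell}\bigl(\mu_{p}(\F,C_{j})-\mu_{p}(df,C_{j})\bigr).
\]

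Next I would invoke the polar excess characterization of generalized curve foliations due to Genzmer–Mol (cited in the excerpt just before Proposition \ref{prop-B}): $\F$ is a generalized curve foliation if and only if $\Delta_{p}(\F,\B_{0})=\Delta_{p}(\F,C)=0$. Combined with the identity above, $\F$ is a generalized curve foliation if and only if
\[
\sum_{j=1}^{\ell}\bigl(\mu_{p}(\F,C_{j})-\mu_{p}(df,C_{j})\bigr)=0.
\]

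Finally, to pass from the vanishing of the sum to the vanishing of each summand, I would appeal to Remark \ref{nonnegative}, which asserts that $\mu_{p}(\F,C_{j})\geq \mu_{p}(df,C_{j})$ for every separatrix $C_{j}$. Therefore the sum vanishes if and only if each summand vanishes, i.e.\ $\mu_{p}(\F,C_{j})=\mu_{p}(df,C_{j})$ for all $j=1,\ldots,\ell$, which is exactly the claimed equivalence. The argument is essentially a bookkeeping step once Corollary \ref{cor:9} is available; the only subtle point, and the place one has to be careful, is the justification that each individual difference $\mu_{p}(\F,C_{j})-\mu_{p}(df,C_{j})$ is nonnegative, but this is precisely the content of Remark \ref{nonnegative}, which itself follows from the nonnegativity of the polar excess $\Delta_{p}(\F,C_{j})$ established via \cite[Proposition 4.6]{Genzmer-Mol}.
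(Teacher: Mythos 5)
Your argument is correct and coincides with the paper's own proof: both specialize Corollary \ref{cor:9} to the non-dicritical case with $F_{B}=f$, invoke the characterization $\Delta_p(\F,C)=0$ from \cite[Theorem A]{Genzmer-Mol}, and use the termwise nonnegativity from Remark \ref{nonnegative} to pass from the vanishing of the sum to the vanishing of each summand. No gaps to report.
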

\begin{proof}
According to \cite[Theorem A]{Genzmer-Mol}, $\F$ is a generalized curve foliation at  $(\mathbb C^2,p)$ if and only if $\Delta_p(\F,C)=0$, where $C$ is the total union of separatrices of $\F$. It follows from Corollary \ref{cor:9} that 
$
\Delta_p(\F,C)=\sum_{j=1}^{\ell}(\mu_p(\F, C_{j})-\mu_p(df,C_{j})),
$
where $f(x,y)=0$ is a  reduced equation of $C$ at $p$.
Thus, by Remark \ref{nonnegative}, $\Delta_p(\F,C)=0$ if and only if $\mu_p(\F,C_j)=\mu_p(df,C_j)$ for all $j=1,\ldots,\ell.$
\end{proof}

\begin{remark}
Observe that if, in Corollary \ref{char CG}, the curve $C: f(x,y)=0$ is irreducible then we rediscover the classic characterization of generalized curve foliations, that is, $\mu_p(\F)=\mu_p(df)=\mu_p(C)$.
\end{remark}

\section{The G\'omez-Mont-Seade-Verjovsky index}
\label{sect:GSV}

Let $\F: \omega=0$ be a singular foliation at $(\mathbb C^2,p)$. Let $C:f(x,y)=0$ be an $\F$-invariant curve, where $f(x,y)\in \C[[x,y]]$ is reduced. Then, as in the convergent case, there are $g,h\in \C[[x,y]]$ (depending on $f$ and $\omega$), with $f$ and $g$ and $f$ and $h$ relatively prime and a $1$-form $\eta$ (see \cite[Lemma 1.1 and its proof]{suwa1995}) such that 
\begin{equation}\label{eq:suwa}
g\omega=hdf+f\eta.
\end{equation}

The {\em G\'omez-Mont-Seade-Verjovsky index} of the foliation $\F$  at $(\C^2,p)$ (GSV-index) with respect to an analytic $\F$-invariant  curve $C$ is
\begin{equation}
\label{eq:GSV}
GSV_p(\F,C)=\frac{1}{2\pi i}\int_{\partial C}\frac{g}{h}d\left(\frac{h}{g}\right),
\end{equation}
where $g,h\in \C\{x,y\}$ are from \eqref{eq:suwa}. This index was introduced in \cite{GSV} but here we follow the presentation of \cite{index}.  If $C$ is irreducible then equality \eqref{eq:GSV} becomes

 \begin{equation}
 \label{eq: irred-GSV}
 GSV_p(\F,C)=\ord_{t}\left(\frac{h}{g}\circ\gamma\right)(t)=i_p(f,h)-i_p(f,g),
\end{equation}
where $\gamma(t)$ is a Puiseux parametrization of $C$. The same formula appears in \cite[Corollary 5.2]{suwa_gsv}. An interesting survey on indices and residues is \cite{correa}.   By \cite[page 532]{index}, we get the adjunction formula
\begin{equation}
\label{adjunction}
GSV_p(\F,C_1\cup C_2)=GSV_p(\F,C_1)+GSV_p(\F,C_2)-2i_p(C_1,C_2),
\end{equation}
 for any two analytic $\F$-invariant  curves,  $C_1$ and $C_2$,  without common irreducible components.

The equality  \eqref{eq: irred-GSV} allows us to extend the definition of the GSV-index to a purely formal (non-analytic) irreducible $\F$-invariant  curve; and the equality \eqref{adjunction} allows us to extend the definition of the GSV-index to $\F$-invariant curves containing purely formal branches.

The following lemma generalizes the equality \eqref{eq: irred-GSV} to any reduced $\F$-invariant curve  (containing perhaps purely formal branches):

\begin{lemma}
\label{eq:GSVgeneral}
Let $C:f(x,y)=0$ be any reduced invariant curve of a singular foliation $\F$ at $(\C^2,p)$. Then 
\[
GSV_p(\F,C)=i_p(f,h)-i_p(f,g),
\]
where $g,h\in \C[[x,y]]$ are from \eqref{eq:suwa}.
\end{lemma}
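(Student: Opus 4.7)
The plan is to reduce the statement to the irreducible case \eqref{eq: irred-GSV} via the adjunction formula \eqref{adjunction}, which was stated for analytic curves but, as the excerpt notes, is taken as the defining recipe for the GSV-index when formal branches appear. So once the equality is proved for $r$ irreducible branches, it carries over uniformly to the mixed formal/analytic setting.

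Concretely, I would write $f = f_1\cdots f_r$ with $C_i : f_i(x,y)=0$ the irreducible components of $C$ and set $\hat f_i := \prod_{j\neq i} f_j$, so that $f = f_i \hat f_i$. Using Leibniz, $df = \hat f_i\, df_i + f_i\, d\hat f_i$, and substituting in \eqref{eq:suwa} yields
\[
g\,\omega = (h\hat f_i)\,df_i + f_i\bigl(h\,d\hat f_i + \hat f_i\,\eta\bigr),
\]
which is exactly a relation of type \eqref{eq:suwa} for the irreducible $\F$-invariant curve $C_i$, with the pair $(g_i,h_i)=(g,h\hat f_i)$. Since $C_i$ is irreducible, \eqref{eq: irred-GSV} (extended verbatim to purely formal branches through a Puiseux parametrization) gives
\[
GSV_p(\F,C_i) = i_p(f_i, h\hat f_i) - i_p(f_i, g) = i_p(f_i,h) + \sum_{j\neq i} i_p(C_i,C_j) - i_p(f_i,g).
\]

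Summing over $i=1,\dots,r$, using the bilinearity of the intersection number ($\sum_i i_p(f_i,h)=i_p(f,h)$, and similarly for $g$), I obtain
\[
\sum_{i=1}^{r} GSV_p(\F,C_i) = i_p(f,h) - i_p(f,g) + 2\sum_{i<j} i_p(C_i,C_j).
\]
On the other hand, iterating \eqref{adjunction} (which is precisely the extension of the definition to the reducible case, in both the analytic and the formal setting) gives
\[
GSV_p(\F,C) = \sum_{i=1}^{r} GSV_p(\F,C_i) - 2\sum_{i<j} i_p(C_i,C_j),
\]
and subtracting the correction term $2\sum_{i<j}i_p(C_i,C_j)$ yields the claimed formula $GSV_p(\F,C)=i_p(f,h)-i_p(f,g)$.

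The only delicate point is checking that the extension of \eqref{eq: irred-GSV} to a purely formal irreducible branch $C_i$ is compatible with the substitution above; but this is immediate because the formal Puiseux parametrization $\gamma_i(t)$ of $C_i$ is still well defined, and $\ord_t\bigl((h\hat f_i/g)\circ\gamma_i\bigr)=i_p(f_i,h\hat f_i)-i_p(f_i,g)$ holds at the level of formal power series. The rest is intersection-theoretic bookkeeping, so no further obstacle arises.
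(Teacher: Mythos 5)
Your argument is correct and is essentially the paper's own proof: the paper also rewrites $g\omega=h\hat f_i\,df_i+f_i(h\,d\hat f_i+\hat f_i\eta)$ to apply \eqref{eq: irred-GSV} branch by branch and then combines via the adjunction formula \eqref{adjunction}, treating the two-component case "without loss of generality" where you carry out the general $r$-component bookkeeping explicitly.
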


\begin{proof}
Suppose, without lost of generality, that $f(x,y)=f_1(x,y)f_2(x,y),$ where $f_1,f_2 \in \mathbb C[[x,y]]$ are irreducible and put $C_i: f_i(x,y)=0$ for $1\leq i\leq 2$. By \eqref{eq:suwa} we get
\[
g\omega=hf_2df_1+f_1(hdf_2+f_2\eta),
\]

\noindent for some $g,h\in \mathbb C[[x,y]]$ relative prime with $f$ and a $1$-form $\eta$. Hence, if $\gamma_1(t)$ is a Puiseux parametrization of $C_1$, then after \eqref{eq: irred-GSV} we have

\begin{eqnarray*}
GSV_p(\F, C_1)&=&\ord_{t}\left(\frac{hf_2}{g}\circ \gamma_1 \right)(t)=\ord_{t}\left(\frac{h}{g}\circ \gamma_1 \right)(t)+\,\ord_{t}\left({f_2}\circ \gamma_1\right)(t)\\
&=&\ord_{t}\left(\frac{h}{g}\circ \gamma_1\right)(t)+i_p(C_1,C_2).
\end{eqnarray*}
Similarly, if  $\gamma_2(t)$ denotes a Puiseux parametrization of $C_2$ then  we have
\[
GSV_p(\F, C_2)=\ord_{t}\left(\frac{h}{g}\circ \gamma_2\right)(t)+i_p(C_1,C_2).
\]
The proof follows after  equality \eqref{adjunction} and properties of the intersection number.
\end{proof}
\par In this section, we will use the following result due to Genzmer-Mol \cite[Theorem B]{Genzmer} that establishes a relationship between the GSV-index and the polar excess number of a foliation with respect to a set of separatrices. 
\begin{theorem}\label{mol}
Let $\F$ be a  singular foliation at $(\C^2,p)$. Let $C$ be a reduced curve of separatrices and $\B=\B_0-\B_{\infty}$ be a balanced divisor of separatrices for $\F$ adapted to $C$. Then 
$$GSV_p(\F,C)=\Delta_p(\F,C)+i_p(C,\B_{0}\setminus C)-i_p(C,\B_{\infty}).$$
\end{theorem}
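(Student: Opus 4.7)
The plan is to express both $GSV_p(\F,C)$ and the polar intersection $i_p(\mathcal P^{dF},C)$ of a balanced meromorphic equation $F$ adapted to $C$ in terms of $i_p(\mathcal P^{\F},C)$, the Milnor and multiplicity invariants of $C$, and the intersection numbers $i_p(C,\B_0\setminus C)$ and $i_p(C,\B_\infty)$, and then to combine them through the definition $\Delta_p(\F,C)=i_p(\mathcal P^{\F},C)-i_p(\mathcal P^{dF},C)$.

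First, I would apply Lemma \ref{eq:GSVgeneral} to write $g\omega=h\,df+f\eta$ with $f,g$ coprime and $f,h$ coprime, giving $GSV_p(\F,C)=i_p(f,h)-i_p(f,g)$. Setting $\omega=P\,dx+Q\,dy$ and $\eta=\eta_1\,dx+\eta_2\,dy$, the corresponding combination of coefficients yields $g(aP+bQ)=h(af_x+bf_y)+f(a\eta_1+b\eta_2)$, whence $g(aP+bQ)\equiv h(af_x+bf_y)\pmod{f}$. Taking $i_p(\,\cdot\,,f)$ of both sides and using the coprimality of $f$ with $g$ and $h$ gives
\[
i_p(\mathcal P^{\F},C)-i_p(\mathcal P^{df},C)=i_p(f,h)-i_p(f,g)=GSV_p(\F,C).
\]
Teissier's Proposition (as invoked in the proof of Proposition \ref{Milnor formula}) yields $i_p(\mathcal P^{df},C)=\mu_p(C)+\nu_p(C)-1$, so $GSV_p(\F,C)=i_p(\mathcal P^{\F},C)-\mu_p(C)-\nu_p(C)+1$.

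Second, I take $F=f\hat f/F_\infty$ to be a balanced meromorphic equation for $\B$ adapted to $C$, with $\{\hat f=0\}=\B_0\setminus C$ and $\{F_\infty=0\}=\B_\infty$. A direct expansion gives
\[
dF=\frac{\hat f\,F_\infty\,df+f\bigl(F_\infty\,d\hat f-\hat f\,dF_\infty\bigr)}{F_\infty^{2}},
\]
so the meromorphic polar $\mathcal P^{dF}$ has equation $(aP'+bQ')/F_\infty^{2}$ whose numerator satisfies $aP'+bQ'\equiv \hat f\,F_\infty\,(af_x+bf_y)\pmod{f}$. Computing $i_p(\mathcal P^{dF},C)=i_p(aP'+bQ',f)-2\,i_p(F_\infty,f)$ and applying Teissier's Proposition once more to the factor $af_x+bf_y$ yields
\[
i_p(\mathcal P^{dF},C)=\mu_p(C)+\nu_p(C)-1+i_p(C,\B_0\setminus C)-i_p(C,\B_\infty).
\]

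Subtracting the two identities through $\Delta_p(\F,C)=i_p(\mathcal P^{\F},C)-i_p(\mathcal P^{dF},C)$ produces the desired equality $GSV_p(\F,C)=\Delta_p(\F,C)+i_p(C,\B_0\setminus C)-i_p(C,\B_\infty)$. The main obstacle will be the careful bookkeeping of the denominator $F_\infty^{2}$ when extracting the meromorphic polar in the second step, together with verifying that $\hat f$ and $F_\infty$ share no branch with $C$ (ensured by $\B$ being adapted to $C$), so that every intersection number that appears is finite and decomposes additively over the irreducible components of $\B_0\setminus C$ and $\B_\infty$.
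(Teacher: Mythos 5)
Your argument is correct, but note that the paper does not prove Theorem \ref{mol} at all: it is quoted as Theorem B of Genzmer--Mol \cite{Genzmer-Mol} and used as a black box. What you have produced is a self-contained derivation from the paper's own toolkit: the identity $GSV_p(\F,C)=i_p(f,h)-i_p(f,g)$ of Lemma \ref{eq:GSVgeneral} combined with $g\omega=h\,df+f\eta$ gives $GSV_p(\F,C)=i_p(\mathcal P^{\F},C)-i_p(\mathcal P^{df},C)$ (this is exactly the argument of Proposition \ref{gsv=delta}, which is logically independent of Theorem \ref{mol}, so there is no circularity), Teissier's formula converts $i_p(\mathcal P^{df},C)$ into $\mu_p(C)+\nu_p(C)-1$, and your expansion of $d(f\hat f/F_\infty)$ over the denominator $F_\infty^2$ is the same bookkeeping the paper itself performs in Lemma \ref{lema: db} and Proposition \ref{prop-B}, yielding $i_p(\mathcal P^{dF},C)=\mu_p(C)+\nu_p(C)-1+i_p(C,\B_0\setminus C)-i_p(C,\B_\infty)$; subtracting gives the claim. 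In effect you run the paper's own deduction of Corollary \ref{GSV-zero divisor} backwards, showing that Theorem \ref{mol} is equivalent, given Proposition \ref{gsv=delta} and Teissier, to a purely algebraic polar computation, whereas Genzmer--Mol obtain it by their study of polar invariants through the reduction of singularities. Two small points worth making explicit: the identity $\Delta_p(\F,C)=i_p(\mathcal P^{\F},C)-i_p(\mathcal P^{dF},C)$ for a reducible $C$ and a single balanced equation $F$ adapted to $C$ rests on Lemma \ref{lema: db} (independence of the adapted balanced equation branch by branch) together with bilinearity of intersection numbers; and one should fix a single $(a:b)$ in the intersection of the finitely many Zariski-open genericity conditions (for $\mathcal P^{\F}$, for Teissier applied to $f$, and for the meromorphic polar of $dF$), which is harmless but should be said.
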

\par We note that the above theorem implies that if $\B$ is an effective balanced divisor of separatrices for $\F$, then $GSV_p(\F,\B)=\Delta_p(\F,\B)$.
\par On the other hand, as a consequence of Proposition \ref{prop-B} and Theorem \ref{mol} we get

\begin{corollary}
\label{GSV-zero divisor}
Let $\F$ be a singular foliation at $(\C^2,p)$. Let $\B=\B_0-\B_{\infty}$ be a reduced balanced divisor of separatrices for $\F$. Then 
\[GSV_p(\F,\B_0)=i_{p}(\mathcal P^{\F},\B_{0})-\mu_p(\B_{0})-\nu_p(\B_{0})+1.\]
\end{corollary}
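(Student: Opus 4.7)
The plan is to obtain the corollary by a direct combination of Theorem \ref{mol} and Proposition \ref{prop-B}, exploiting a fortuitous cancellation of intersection terms.

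First I would apply Theorem \ref{mol} taking the reduced curve of separatrices to be $C=\B_0$. Note that $\B$ is automatically adapted to $C=\B_0$ since $\B_0-C=0\geq 0$, and since $\B$ is reduced, $\B_0$ is a reduced curve, so the theorem applies. With $C=\B_0$, the difference $\B_0\setminus C$ is the zero divisor, so $i_p(\B_0,\B_0\setminus C)=0$. Consequently
\[
GSV_p(\F,\B_0) \;=\; \Delta_p(\F,\B_0)\;-\;i_p(\B_0,\B_\infty).
\]

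Next I would substitute into this expression the formula furnished by Proposition \ref{prop-B}, namely
\[
\Delta_p(\F,\B_0)\;=\;i_p(\mathcal P^{\F},\B_0)+i_p(\B_0,\B_\infty)-\mu_p(\B_0)-\nu_p(\B_0)+1.
\]
The two occurrences of $i_p(\B_0,\B_\infty)$ cancel, yielding
\[
GSV_p(\F,\B_0)\;=\;i_p(\mathcal P^{\F},\B_0)-\mu_p(\B_0)-\nu_p(\B_0)+1,
\]
which is the claimed identity.

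There is no real obstacle here: once one notices that choosing $C=\B_0$ trivializes the $\B_0\setminus C$ term in Theorem \ref{mol} while leaving the pole-intersection term to cancel precisely against the corresponding contribution in Proposition \ref{prop-B}, the corollary is immediate. The only point requiring a brief justification is that $\B$ is indeed adapted to $\B_0$ in the sense required by Theorem \ref{mol}, which is tautological from the definition $\B=\B_0-\B_\infty$.
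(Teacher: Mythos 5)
Your argument is correct and coincides with the paper's own proof: apply Theorem \ref{mol} with $C=\B_0$ (so the term $i_p(\B_0,\B_0\setminus C)$ vanishes), then substitute the formula for $\Delta_p(\F,\B_0)$ from Proposition \ref{prop-B}, and the $i_p(\B_0,\B_\infty)$ terms cancel. Nothing further is needed.
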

\begin{proof}
By applying Theorem \ref{mol} to $C=\B_0$, we have
\begin{equation}\label{eq_3}
GSV_p(\F,\B_0)=\Delta_p(\F,\B_0)-i_p(\B_0,\B_{\infty}),
\end{equation}
and it follows from Proposition \ref{prop-B} that
\begin{equation}\label{eq_4}
\Delta_p(\F,\B_{0})=i_{p}(\mathcal P^{\F},\B_{0})+i_{p}(\B_{0},\B_{\infty})-\mu_p(\B_{0})-\nu_p(\B_{0})+1.
\end{equation}
The proof ends by substituting (\ref{eq_4}) in (\ref{eq_3}).
\end{proof}

The following proposition holds for an arbitrary foliation and any subset of separatrices and is not restricted only to convergent separatrices as in \cite[Proposition 4]{Cano}. The proof is similar, and is written for the reader's understanding.

\begin{proposition}\label{gsv=delta}
Let $C:f(x,y)=0$ be any reduced invariant curve of a singular foliation $\F$ at $(\C^2,p)$. Then 
\[GSV_p(\F,C)=i_p(\mathcal P^{\mathcal F},C)-i_p(\mathcal P^{df},C).\]
\end{proposition}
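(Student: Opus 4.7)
The plan is to combine the definition of the polar excess number with Theorem \ref{mol}, and reduce the statement to an explicit computation of how $i_p(\mathcal P^{dF},C)$ differs from $i_p(\mathcal P^{df},C)$, where $F$ is a balanced meromorphic equation adapted to $C$.

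First, I would choose a balanced divisor of separatrices $\B=\B_0-\B_\infty$ for $\F$ adapted to $C$, so that $\B_0-C\geq 0$. Write $F=f\cdot\varphi$, where $\varphi$ is a formal meromorphic function whose zero divisor is $\B_0\setminus C$ and whose pole divisor is $\B_\infty$ (so $f$ and $\varphi$ share no common components). By the definition of polar excess \eqref{EN:1},
\[
\Delta_p(\F,C)=i_p(\mathcal P^{\F},C)-i_p(\mathcal P^{dF},C),
\]
and by Theorem \ref{mol},
\[
GSV_p(\F,C)=\Delta_p(\F,C)+i_p(C,\B_0\setminus C)-i_p(C,\B_\infty).
\]
So the claim reduces to showing
\[
i_p(\mathcal P^{dF},C)-i_p(\mathcal P^{df},C)=i_p(C,\B_0\setminus C)-i_p(C,\B_\infty).
\]

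Next I would carry out this computation. Applying the Leibniz rule gives $dF=\varphi\,df+f\,d\varphi$, so for a generic $(a:b)\in\mathbb P^1$, the polar divisor of $dF$ is defined (as a divisor of a meromorphic function) by
\[
\varphi\,(a\partial_x f+b\partial_y f)+f\,(a\partial_x\varphi+b\partial_y\varphi)=0.
\]
Intersecting with $C:f=0$, the second term contributes nothing (it carries the factor $f$), and bilinearity of the intersection number, extended to divisors of meromorphic functions, yields
\[
i_p(\mathcal P^{dF},C)=i_p(\varphi,f)+i_p(\mathcal P^{df},C)=i_p(C,\B_0\setminus C)-i_p(C,\B_\infty)+i_p(\mathcal P^{df},C).
\]
Substituting into Theorem \ref{mol} produces the desired formula.

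The main technical point to handle carefully is the validity of the meromorphic computation: one must verify that $f$ and $\varphi$ can indeed be chosen without common components (which is built into the definition of a balanced divisor of separatrices), and that the intersection number $i_p(\varphi,f)$ is well-defined as the signed sum $i_p(C,\B_0\setminus C)-i_p(C,\B_\infty)$, exactly as in the manipulations carried out in the proof of Lemma \ref{lema: db}. Once this bookkeeping is in place, everything else is a direct combination of the definition of $\Delta_p(\F,C)$ and Theorem \ref{mol}; no new geometric input is needed.
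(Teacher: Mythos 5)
Your argument is correct, but it takes a genuinely different route from the paper. The paper proves Proposition \ref{gsv=delta} directly and self-containedly: starting from the decomposition $g\omega=h\,df+f\eta$ of \eqref{eq:suwa}, it evaluates $g\cdot(aP+bQ)=h\cdot(a\partial_xf+b\partial_yf)+fk$ against $f$, and concludes via Lemma \ref{eq:GSVgeneral} ($GSV_p(\F,C)=i_p(f,h)-i_p(f,g)$); no balanced divisors and no Theorem \ref{mol} are involved. You instead invoke the Genzmer--Mol formula (Theorem \ref{mol}) together with the definition \eqref{EN:1} of the polar excess, and reduce everything to the Leibniz computation $i_p(\mathcal P^{dF},C)=i_p(C,\B_0\setminus C)-i_p(C,\B_\infty)+i_p(\mathcal P^{df},C)$ for a balanced equation $F=f\varphi$ adapted to $C$; that computation is sound, since on each branch of $C$ the term $f\,d\varphi$ pulls back to zero and $\varphi$, $a\partial_xf+b\partial_yf$ have finite (possibly negative) order along the branch. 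What your route buys is transparency: it shows that the present proposition and Theorem \ref{mol} differ exactly by the correction terms produced by the Leibniz rule, so the two formulas are essentially equivalent. What it costs is reliance on the deeper external result of Genzmer--Mol (fine within this paper, but one should be aware that their proof of Theorem B proceeds by computations of the same flavour as the paper's direct argument, so as a stand-alone proof it is less economical), plus some bookkeeping you correctly flag but should make explicit: the existence of a balanced divisor adapted to an arbitrary reduced curve of separatrices $C$, the use of one and the same adapted equation $F$ for all branches of $C$ when applying \eqref{EN:1} (legitimate by Lemma \ref{lema: db}), and the choice of a single $(a:b)$ that is simultaneously generic for $\mathcal P^{\F}$, $\mathcal P^{dF}$ and $\mathcal P^{df}$ (an intersection of finitely many dense Zariski-open conditions). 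With those points spelled out, your proof is complete.
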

\begin{proof}
Let $\omega=P(x,y)dx+Q(x,y)dy$  be a 1-form inducing $\F$.
By equality \eqref{eq:suwa} we get $g\omega=hdf+f\eta,$
where $\eta$ is a formal 1-form and $g, h\in\C[[x,y]]$ 
with $g$ and $f$  relatively prime and $h$ and $f$ also relatively prime. Let $(a:b)\in\mathbb{P}^1$ such that the polar curves $aP(x,y)+bQ(x,y)=0$ and $a\partial_x{f}+b\partial_y{f}=0$ of $\F$ and $df$ respectively, are generic. We have
$g\cdot (aP+bQ)=h\cdot (a\partial_x{f}+b\partial_y{f})+fk,$
for some $k\in\C[[x,y]]$.
Then $i_p(f,g\cdot (aP+bQ))=i_p(f,h\cdot (a\partial_x{f}+b\partial_y{f})+fk)=i_p(f, h\cdot (a\partial_x{f}+b\partial_y{f}))$. So $i_p(f,g\cdot (aP+bQ))=i_p(f,h)+i_p(f,a\partial_x{f}+b\partial_y{f})$.

\noindent On the other hand  $i_p(f,g\cdot (aP+bQ))=i_p(f,g)+i_p(f,aP+bQ)$, hence
\begin{equation}
\label{eq:restando}
i_p(f,g)+i_p(f,aP+bQ)=i_p(f,h)+i_p(f,a\partial_x{f}+b\partial_y{f}).
\end{equation}
 
 \noindent We finish the proof using Lemma \ref{eq:GSVgeneral} and equality \eqref{eq:restando}.

\end{proof}

\begin{remark}
If $\F$ is a non-dicritical singular foliation at $(\C^2,p)$, where $C$ is the total union of separatrices of $\F$ then $i_p(\mathcal P^{\mathcal F},f)-i_p(\mathcal P^{df},f)=\Delta_p(\F,C)$, but in general this two values are  different as the following example shows: consider the foliation $\F$ 
 defined by $\omega=2xdy-3ydx$ and  the curve $C:$ $y^2-x^3=0$. 
Note that $\F$ admits the meromorphic first integral $y^2/x^3$ and so that $C$ is $\F$-invariant. We get
$i_0(\mathcal P^{\mathcal F},C)-i_0(\mathcal P^{df},C)=-1,$
and $\Delta_0(\F,C)=0$, since $\F$ is a generalized curve foliation. 
\end{remark}

We obtain the following corollary.
\begin{corollary}\label{gsv=milnor}
Let $\F$ be a singular foliation at $(\mathbb C^2,p)$. Assume that $\F$ is non-dicritical and $C$ is the total union of separatrices of $\F$. Then
\[GSV_p(\F,C)=\mu_p(\F)-\mu_p(C)-\chi_p(\F).\]
\end{corollary}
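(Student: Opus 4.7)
The plan is to combine Proposition \ref{gsv=delta} with Lemma \ref{index_2} applied to both $\F$ and to the Hamiltonian foliation $df$, using that in the non-dicritical setting the total union of separatrices $C$ is itself a balanced divisor (with empty polar part).

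First, since $C:f(x,y)=0$ is $\F$-invariant, Proposition \ref{gsv=delta} gives
\[
\gsv_p(\F,C)=i_p(\mathcal P^{\F},C)-i_p(\mathcal P^{df},C),
\]
so the problem reduces to computing each polar intersection number. For the first one I apply Lemma \ref{index_2} with $\B=C$ (legitimate because $C$ is a balanced divisor of separatrices in the non-dicritical case):
\[
i_p(\mathcal P^{\F},C)=\mu_p(\F)+\nu_p(\F)-\sum_{q\in\cl{I}_p(\F)}\nu_q(\F)\xi_q(\F).
\]
By the definition of the $\chi$-number, $\sum_{q\in\cl{I}_p(\F)}\nu_q(\F)\xi_q(\F)=\chi_p(\F)+\xi_p(\F)$, and Proposition \ref{prop:Equa-Ba} yields $\nu_p(\F)=\nu_p(C)-1+\xi_p(\F)$. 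Substituting, the $\xi_p(\F)$ contributions cancel and
\[
i_p(\mathcal P^{\F},C)=\mu_p(\F)+\nu_p(C)-1-\chi_p(\F).
\]

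Next I treat $df$ in exactly the same way. The Hamiltonian foliation $df=0$ is a non-dicritical generalized curve foliation whose total union of separatrices equals $C$ (the irreducible components of $f$), so it is of second type and $\xi_q(df)=0$ for every $q\in\cl{I}_p(df)$; in particular $\chi_p(df)=0$ and $\nu_p(df)=\nu_p(C)-1$. Lemma \ref{index_2} applied to $df$ with $\B=C$ therefore gives
\[
i_p(\mathcal P^{df},C)=\mu_p(df)+\nu_p(df)=\mu_p(C)+\nu_p(C)-1,
\]
using that $\mu_p(df)=i_p(\partial_x f,\partial_y f)=\mu_p(C)$ by definition. Subtracting this from the expression for $i_p(\mathcal P^{\F},C)$ cancels the $\nu_p(C)-1$ terms and yields the desired identity
\[
\gsv_p(\F,C)=\mu_p(\F)-\mu_p(C)-\chi_p(\F).
\]

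The only potentially delicate point is the claim that the set of separatrices of $df$ at $p$ equals the set of irreducible components of $C$, so that $C$ is indeed a balanced (total union) divisor for $df$ and Lemma \ref{index_2} applies; this is standard since near $p$ only the level $\{f=f(p)=0\}$ can contain invariant germs through the singularity of $df$. Everything else is a direct bookkeeping of the three previously established formulas, and no new estimate is required. As an alternative route, one could instead invoke Theorem \ref{mol} (which collapses to $\gsv_p(\F,C)=\Delta_p(\F,C)$ in the non-dicritical case) together with Theorem \ref{Th:Delta-chi} applied to $\B=C$ and Proposition \ref{Milnor and chi} applied to $df$ to evaluate $\sum_j\mu_p(df,C_j)=\mu_p(C)+\ell-1$; the two approaches produce the same cancellation.
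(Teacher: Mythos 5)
Your argument is correct and its skeleton is the same as the paper's: both start from Proposition \ref{gsv=delta} to write $\gsv_p(\F,C)=i_p(\mathcal P^{\F},C)-i_p(\mathcal P^{df},C)$, evaluate the first term by Lemma \ref{index_2} applied to $\F$ and $C$, and then absorb $\nu_p(\F)-\nu_p(C)+1=\xi_p(\F)$ (Proposition \ref{prop:Equa-Ba}) into the definition of $\chi_p(\F)$. The one place you diverge is the second polar intersection number: the paper obtains $i_p(\mathcal P^{df},C)=\mu_p(C)+\nu_p(C)-1$ in one line from Teissier's proposition \cite[Chapter II, Proposition 1.2]{Teissier}, whereas you re-derive it by applying Lemma \ref{index_2} to the Hamiltonian foliation $df$ itself, using that $df$ is a non-dicritical foliation of second type whose total union of separatrices is $C$, together with $\mu_p(df)=\mu_p(C)$ and $\nu_p(df)=\nu_p(C)-1$. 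This is legitimate and not circular (the paper itself treats $C$ as a balanced divisor for $df$ in the proof of Proposition \ref{Milnor formula}, and the facts you invoke about Hamiltonian foliations are standard, also in the formal setting, since a saddle-node admits no non-constant formal first integral), but it is the slightly heavier route: Teissier's formula yields the same identity without needing to know that $df$ has no tangent saddle-nodes in its reduction. Your alternative route via Theorem \ref{mol}, Theorem \ref{Th:Delta-chi} with $\B=C$, and Proposition \ref{Milnor and chi} applied to $df$ also checks out and gives the same cancellation.
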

\begin{proof}
By Proposition \ref{gsv=delta} we have $GSV_p(\F,C)=i_p(\mathcal P^{\mathcal F},C)-i_p(\mathcal P^{df},C)$ and applying Lemma \ref{index_2} to $\F$ and $C$, we get
$i_p(\mathcal P^{\mathcal F},C)=\mu_p(\F)+\nu_p(\F)-\sum_{q\in\mathcal{I}_p(\F)}\nu_q(\F)\xi_q(\F)$.
Teissier's Proposition \cite[Chapter II, Proposition 1.2]{Teissier} implies that
\[i_p(\mathcal{P}^{df},C)=\mu_p(C)+\nu_p(C)-1.\] Thus 
\begin{eqnarray*}
GSV_p(\F,C)&=& i_p(\mathcal P^{\mathcal F},C)-i_p(\mathcal P^{df},C)\\
&=& \mu_p(\F)+\nu_p(\F)-\sum_{q\in\mathcal{I}_p(\F)}\nu_q(\F)\xi_q(\F) - (\mu_p(C)+\nu_p(C)-1)\\
&=& \mu_p(\F)-\mu_p(C)+\underbrace{(\nu_p(\F)-\nu_p(C)+1)}_{\xi_p(\F)}-\sum_{q\in\mathcal{I}_p(\F)}\nu_q(\F)\xi_q(\F)\\
&=& \mu_p(\F)-\mu_p(C) - \left(\underbrace{\sum_{q\in\mathcal{I}_p(\F)}\nu_q(\F)\xi_q(\F)-\xi_p(\F)}_{\chi_p(\F)}\right)\\
&=& \mu_p(\F)-\mu_p(C)-\chi_p(\F).
\end{eqnarray*}
\end{proof}

To finish this section we state a relationship between the GSV-index and the multiplicity of $\F$ along a fixed separatrix. 
\begin{proposition}
\label{prop:gsv and multiplicity}
Let $\F$ be a singular foliation at $(\C^2,p)$ and $B:f(x,y)=0$ be a separatrix of $\F$. 
Then 
\begin{equation}
\mu_p(\F,B)=
\begin{cases}
GSV_p(\F,B)+\ord_t\partial_y f(\gamma(t))-\ord_t x(t)+1 & \text{if $x(t)\neq 0$};
\medskip \\
GSV_p(\F,B)+\ord_t\partial_x f(\gamma(t))-\ord_t y(t)+1 & \text{if $y(t)\neq 0$},
\end{cases} 
\end{equation}
where $\gamma(t)=(x(t),y(t))$ is a Puiseux parametrization of $B$. In particular, if $B$ is a non-singular separatrix, then $\mu_p(\F,B)=GSV_p(\F,B)$.
\end{proposition}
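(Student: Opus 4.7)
The plan is to pull back the Suwa-type relation $g\,\omega = h\,df + f\,\eta$ of equation~\eqref{eq:suwa} along the Puiseux parametrization $\gamma(t)=(x(t),y(t))$ of $B$. Writing $\omega = P\,dx + Q\,dy$ and $\eta = \eta_1\,dx + \eta_2\,dy$, that relation splits componentwise as
\[
gP = h\,\partial_x f + f\,\eta_1, \qquad gQ = h\,\partial_y f + f\,\eta_2.
\]
Since $f(\gamma(t))\equiv 0$, evaluating at $\gamma$ kills the $f$-terms and leaves
\[
g(\gamma(t))\,P(\gamma(t)) = h(\gamma(t))\,\partial_x f(\gamma(t)), \qquad g(\gamma(t))\,Q(\gamma(t)) = h(\gamma(t))\,\partial_y f(\gamma(t)).
\]

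Next I would take $t$-orders of these two identities. Since $g$ and $h$ are coprime to $f$, neither vanishes identically along $\gamma$, and the standard identification gives $\ord_t g(\gamma(t)) = i_p(f,g)$ and $\ord_t h(\gamma(t)) = i_p(f,h)$. Combining this with Lemma~\ref{eq:GSVgeneral}, which states $\gsv_p(\F,B) = i_p(f,h) - i_p(f,g)$, one obtains
\[
\ord_t P(\gamma(t)) = \gsv_p(\F,B) + \ord_t \partial_x f(\gamma(t)), \qquad \ord_t Q(\gamma(t)) = \gsv_p(\F,B) + \ord_t \partial_y f(\gamma(t)).
\]
Substituting these into the closed-form expressions for $\mu_p(\F,B)$ provided by equation~\eqref{mult_2} produces both cases of the stated formula. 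Consistency between the two branches, on their common domain of validity, is automatic: differentiating $f\circ\gamma\equiv 0$ yields $\partial_x f(\gamma(t))\,x'(t) + \partial_y f(\gamma(t))\,y'(t) = 0$, so $\ord_t\partial_x f(\gamma(t)) + \ord_t x(t) = \ord_t\partial_y f(\gamma(t)) + \ord_t y(t)$, matching the two branches term by term.

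For the non-singular separatrix statement, after a linear change of coordinates, which preserves both $\mu_p(\F,B)$ and $\gsv_p(\F,B)$, I may assume $B$ is transverse to $\{x=0\}$ and parametrize it by $x(t)=t$, so $\ord_t x(t)=1$. Smoothness at $p$ forces $\nabla f(p)\neq 0$, and transversality to $\{x=0\}$ gives $\partial_y f(p)\neq 0$, whence $\ord_t\partial_y f(\gamma(t))=0$. The first branch of the formula then collapses to $\mu_p(\F,B)=\gsv_p(\F,B)$. The main obstacle is a mild bookkeeping issue: verifying that the right-hand side is insensitive to the chosen reduced equation of $B$. This dissolves because replacing $f$ by $uf$ for a unit $u$ multiplies $\partial_x f(\gamma(t))$ and $\partial_y f(\gamma(t))$ by units along $\gamma$, and leaves the intersection multiplicities $i_p(f,g)$ and $i_p(f,h)$ unchanged.
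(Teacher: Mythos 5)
Your argument is correct and is essentially the paper's own proof: both pull back the relation $g\omega=h\,df+f\eta$ of \eqref{eq:suwa} along the Puiseux parametrization, use $f\circ\gamma\equiv 0$ to get $P(\gamma)=\bigl(\tfrac{h}{g}\bigr)(\gamma)\,\partial_x f(\gamma)$ and $Q(\gamma)=\bigl(\tfrac{h}{g}\bigr)(\gamma)\,\partial_y f(\gamma)$, and then take $t$-orders in \eqref{mult_2}, identifying $\ord_t\bigl(\tfrac{h}{g}\circ\gamma\bigr)(t)$ with $\gsv_p(\F,B)$ via \eqref{eq: irred-GSV}. Your additional remarks (consistency of the two branches and the explicit treatment of the smooth case) are harmless refinements of the same computation.
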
 
\begin{proof}
Let $\omega=P(x,y)dx+Q(x,y)dy$  be a 1-form inducing $\F$ and $f(x,y)=0$ be a reduced equation of $B$. By equality \eqref{eq:suwa} we get
\[
g\omega=hdf+f\eta,
\]
where $\eta$ is a formal 1-form and $g, h\in\C[[x,y]]$, 
where $g$ and $f$ are relatively prime and $h$ and $f$ are relatively prime. From equality (\ref{mult_2}), we have that the unique vector field $\theta(t)$ such that 
$\gamma_{*}\theta(t)=v(\gamma(t))$, where $v=-Q(x,y)\frac{\partial}{\partial{x}}+P(x,y)\frac{\partial}{\partial{y}}$, is given by 
\begin{equation}
\theta(t)=
\begin{cases}
\dfrac{-\left(\frac{h}{g}\right)(\gamma(t))\partial_y f(\gamma(t))}{x'(t)} & \text{if $x(t)\neq 0$};
\medskip \\

\dfrac{\left(\frac{h}{g}\right)(\gamma(t))\partial_x f(\gamma(t))}{y'(t)} & \text{if $y(t)\neq 0$}.
\end{cases} 
\end{equation}
Therefore, the proof follows taking orders. 
\end{proof}

\section{Tjurina  number}
\label{sect:Tjurina}
Let $\F$ be a singular foliation at $(\C^2,p)$ defined by the 1-form $\omega=P(x,y)dx+Q(x,y)dy$
 and $C:f(x,y)=0$ be a $\F$-invariant reduced curve.  The {\it Tjurina number of} $\F$  {\it with respect to} $C$ is
\[\tau_p(\F,C)=\dim_{\mathbb  C} \mathbb  C[[x,y]]/(f,P,Q).\]

\par The {\it Tjurina number} of any germ of reduced curve $C:f(x,y)=0$, with $f(x,y)\in \mathbb  C[[x,y]]$ is by definition 
\[\
\tau_p(C)=\dim_{\mathbb C} \mathbb  C[[x,y]]/(f,\partial_{x} f,\partial_{y} f).\]

\par In this section we will study the Tjurina number of a foliation with respect to a balanced divisor of separatrices. First of all we present a lemma on Commutative Algebra which we need in the sequel and we did not find it in the literature:

\begin{lemma}
\label{ca}
Let $f,g,p,q\in \mathbb C[[x,y]],$ where $f$ and $g$ are relatively prime. Then 
\[
\dim_{\mathbb C}\mathbb C[[x,y]]/(f,gp,gq)=\dim_{\mathbb C}\mathbb C[[x,y]]/(f,p,q)+\dim_{\mathbb C}\mathbb C[[x,y]]/(f,g).
\]
\end{lemma}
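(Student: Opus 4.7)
The plan is to reduce to the quotient ring $R=\mathbb{C}[[x,y]]/(f)$ and exhibit a short exact sequence of $R$-modules whose outer terms have dimensions equal to the two summands on the right-hand side of the claimed identity.

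More precisely, write $\bar{h}$ for the image in $R$ of an element $h\in\mathbb{C}[[x,y]]$ and consider the sequence
\[
0\longrightarrow R/(\bar p,\bar q)\xrightarrow{\;\cdot\bar g\;} R/(\bar g\bar p,\bar g\bar q)\longrightarrow R/(\bar g)\longrightarrow 0,
\]
where the first map is multiplication by $\bar g$ and the second is the natural projection. Well-definedness and surjectivity of the projection, together with exactness at the middle term, are immediate from $(\bar g\bar p,\bar g\bar q)\subseteq (\bar g)$ and the fact that the image of multiplication by $\bar g$ is exactly $(\bar g)/(\bar g\bar p,\bar g\bar q)$.

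The crucial step, where the coprimality of $f$ and $g$ is used, is the injectivity of the multiplication map. Concretely, if $\bar g\bar h\in(\bar g\bar p,\bar g\bar q)$ in $R$, then in $\mathbb{C}[[x,y]]$ one has $g(h-pa-qb)=fc$ for some $a,b,c$; since $f$ and $g$ share no common factor and $\mathbb{C}[[x,y]]$ is a UFD, $f$ must divide $h-pa-qb$, which gives $\bar h\in(\bar p,\bar q)$. I expect this to be the only delicate point in the argument.

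To conclude, I would note that all three $\mathbb{C}$-vector spaces appearing are finite-dimensional (this can be assumed without loss, since otherwise both sides are infinite and the identity is vacuous, or one can argue separately that finiteness of $\dim_{\mathbb{C}} \mathbb{C}[[x,y]]/(f,gp,gq)$ forces finiteness of the other two via the sequence itself). Additivity of dimension along a short exact sequence of finite-dimensional vector spaces then yields
\[
\dim_{\mathbb{C}}\mathbb{C}[[x,y]]/(f,gp,gq)=\dim_{\mathbb{C}}\mathbb{C}[[x,y]]/(f,p,q)+\dim_{\mathbb{C}}\mathbb{C}[[x,y]]/(f,g),
\]
which is the desired equality.
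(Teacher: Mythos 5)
Your proof is correct and follows essentially the same route as the paper: both pass to $R=\mathbb{C}[[x,y]]/(f)$ and use the short exact sequence $0\to R/(\bar p,\bar q)\xrightarrow{\cdot\bar g} R/(\bar g\bar p,\bar g\bar q)\to R/(\bar g)\to 0$, with additivity of dimension. You merely spell out the injectivity step (where coprimality of $f$ and $g$ and the UFD property enter), which the paper leaves implicit.
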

\begin{proof}
Observe that $\dim_{\mathbb C}\mathbb C[[x,y]]/(f,r_{1},\ldots,r_{n})=\dim_{\mathbb C}\mathcal O/(r'_{1},\ldots,r'_{n}),$ where $\mathcal O=\mathbb C[[x,y]]/(f)$ and $r'_{i}=r_{i}+(f)$ for any $i\in\{1, \ldots,n\}$ and
any $r_{i}\in \mathbb C[[x,y]]$. We finish the proof using the following exact sequence:
\[
0\longrightarrow \mathcal O/(p',q') \stackrel{\sigma}{\longrightarrow} \mathcal O/(g'p',g'q')\stackrel{\delta}{\longrightarrow} \mathcal O/(g')\longrightarrow 0,
\]
where $\sigma(z'+(p',q'))=g'z'+(g'p',g'q')$ and $\delta(z'+(g'p',g'q'))=z'+(g')$, for any $z'\in \mathcal O$.
\end{proof}
\par The following proposition has been proved by X. G\'omez-Mont \cite[Theorem 1]{Gomez} for a foliation with a set of convergent separatrices. We show that the same result holds in the formal context for effective reduced balanced divisor of separatrices. 

\begin{proposition}\label{obs_1}
Let $\F$ be a singular foliation at $(\C^2,p)$ and $C$ be  a reduced curve of separatrices of $\F$. Then
\[\tau_p(\F,C)-\tau_p(C)=GSV_p(\F,C).\]
\end{proposition}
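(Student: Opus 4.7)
The plan is to compare the ideals $(f,P,Q)$ and $(f,\partial_x f,\partial_y f)$ by multiplying each by an auxiliary element (namely $g$ and $h$ respectively from equation \eqref{eq:suwa}) and then applying Lemma \ref{ca} twice to move between quotient dimensions. The bridge between the two resulting ideals will be the identity \eqref{eq:suwa}, and the final identification with $\gsv_p(\F,C)$ will come from Lemma \ref{eq:GSVgeneral}.

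More precisely, let $\omega=P\,dx+Q\,dy$ induce $\F$ and let $f(x,y)=0$ be a reduced equation of $C$. Writing the formal $1$-form in \eqref{eq:suwa} as $\eta=\eta_1 dx+\eta_2 dy$, the equality $g\omega=h\,df+f\eta$ yields
\[
gP=h\,\partial_x f+f\eta_1,\qquad gQ=h\,\partial_y f+f\eta_2,
\]
so that in $\C[[x,y]]$ the ideals
\[
(f,gP,gQ)\;=\;(f,h\,\partial_x f,h\,\partial_y f)
\]
coincide. Recall that by construction both $\gcd(f,g)=1$ and $\gcd(f,h)=1$, which are exactly the hypotheses that allow Lemma \ref{ca} to be applied.

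Applying Lemma \ref{ca} to the left-hand ideal with the auxiliary factor $g$, we obtain
\[
\dim_{\C}\C[[x,y]]/(f,gP,gQ)\;=\;\tau_p(\F,C)+i_p(f,g).
\]
Applying Lemma \ref{ca} to the right-hand ideal with the auxiliary factor $h$ gives
\[
\dim_{\C}\C[[x,y]]/(f,h\,\partial_x f,h\,\partial_y f)\;=\;\tau_p(C)+i_p(f,h).
\]
Equating the two expressions and rearranging,
\[
\tau_p(\F,C)-\tau_p(C)\;=\;i_p(f,h)-i_p(f,g),
\]
which, by Lemma \ref{eq:GSVgeneral}, is exactly $\gsv_p(\F,C)$.

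The only potential obstacle is checking the coprimality hypotheses required by Lemma \ref{ca}; these are guaranteed by the way $g$ and $h$ are produced in \eqref{eq:suwa} (and the reducedness of $f$ plays no further role beyond ensuring that $\tau_p(C)$ is finite). The argument is formally identical to G\'omez-Mont's, but the use of Lemma \ref{eq:GSVgeneral} is what extends the statement to the formal setting and to reduced curves of separatrices that may contain purely formal branches.
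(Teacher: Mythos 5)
Your argument is correct and coincides essentially with the paper's proof: both extract $gP=h\partial_x f+f\eta_1$, $gQ=h\partial_y f+f\eta_2$ from \eqref{eq:suwa}, apply Lemma \ref{ca} once with the factor $g$ and once with the factor $h$ to identify the two dimensions with $\tau_p(\F,C)+i_p(f,g)$ and $\tau_p(C)+i_p(f,h)$, and conclude via Lemma \ref{eq:GSVgeneral}. The only cosmetic difference is that you phrase the middle step as an equality of ideals $(f,gP,gQ)=(f,h\partial_x f,h\partial_y f)$, whereas the paper discards the $f\eta$-terms inside the dimension count; the content is identical.
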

\begin{proof}
Let $\omega=P(x,y)dx+Q(x,y)dy$  be a 1-form inducing $\F$ and $f(x,y)=0$ be the reduced equation of $C$.
By equality \eqref{eq:suwa} we get
$g\omega=hdf+f\eta,$
where $\eta$ is a formal 1-form and $g, h\in\C[[x,y]]$ 
with $g$ and $f$  relatively prime and $h$ and $f$ also relatively prime.

Hence
$gPdx+gQdy=(h\partial_x{f}+f\eta_x)dx+(h\partial_y{f}+f\eta_y)dy,$
where $\eta=\eta_xdx+\eta_y dy$. We get 
\begin{equation}
\label{eq:1}gP=h\partial_x{f}+f\eta_x,\,\,\,\,\,\text{and}\,\,\,\,\, gQ=h\partial_y{f}+f\eta_y.
\end{equation}
\noindent After equalities \eqref{eq:1}, properties of the intersection number and Lemma \ref{ca}, we have 
\begin{eqnarray*}
\dim_{\C}\C[[x,y]]/(f,gP,gQ)&=&\dim_{\C}\C[[x,y]]/(f,h\partial_x{f}+f\eta_x,h\partial_y{f}+f\eta_y)\\
&=&\dim_{\C}\C[[x,y]]/(f,h\partial_x{f},h\partial_y{f})\\
&=&\dim_{\C}\C[[x,y]]/(f,h)+\dim_{\C}\C[[x,y]]/(f,\partial_x{f},\partial_y{f})\\
&=&\dim_{\C}\C[[x,y]]/(f,h)+\tau_p(C). \label{eq:2}
\end{eqnarray*}
Again, by Lemma \ref{ca} we get 
\[\dim_{\C}\C[[x,y]]/(f,P,Q)+\dim_{\C}\C[[x,y]]/(f,g)=\dim_{\C}\C[[x,y]]/(f,h)+\tau_p(C).\]
Hence $\tau_p(\F,C)-\tau_p(C)=\dim_{\C}\C[[x,y]]/(f,h)-\dim_{\C}\C[[x,y]]/(f,g)=i_p(f,h)-i_p(f,g)$.  The proof follows from Lemma \ref{eq:GSVgeneral}.
\end{proof}

\begin{example}
Let $\F$ be the foliation defined by the formal normal form of a saddle-node (see \eqref{saddle-node-formal}) $\omega=x^{k+1}dy-y(1+\lambda x^{k})dx,\,\,\,\,\,\,k\geq 1,\,\,\,\,\,\lambda\in\C.$
The total union of separatrices of $\F$ is $C=C_1\cup C_2$, where $C_1: x=0$ (strong separatrix) and $C_2: y=0$ (weak separatrix). An equality  \eqref{eq:suwa} for 
$C_{1}$ is given for $g=1$, $h=-y(1+\lambda x^{k})$ and $\eta=x^{k}dy$, hence by Lemma \ref{eq:GSVgeneral} we get $GSV_0(\F,C_1)=i_{0}(x,h)-i_{0}(x,g)=1$. Similarly, an equality \eqref{eq:suwa} for 
$C_{2}$ is given for $g=1$, $h=x^{k+1}$ and $\eta=-(1+\lambda x^{k})dx$, thus $GSV_0(\F,C_2)=i_{0}(y,h)-i_{0}(y,g)=k+1$. Therefore, one finds
 \[GSV_0(\F,C)=GSV_0(\F,C_1)+GSV_0(\F,C_2)-2i_0(C_1,C_2)=1+(k+1)-2=k.\]
On the other hand, we get $\tau_0(\F,C)-\tau_0(C)=(k+1)-1=k=GSV_0(\F,C).$

\end{example}

\begin{maincorollary}
Let $\F$ be a singular foliation at $(\mathbb C^2,p)$. Assume that $\F$ is non-dicritical and $C$ is the total union of separatrices of $\F$. Then
 \[
 \mu_{p}(\F)-\tau_{p}(\F,C)=\mu_{p}(C)-\tau_{p}(C)+\chi_{p}(\F).
 \]
 Moreover, if $\F$ is of second type then $\mu_{p}(\F)-\tau_{p}(\F,C)=\mu_{p}(C)-\tau_{p}(C)$.
\end{maincorollary}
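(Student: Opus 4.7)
The plan is to combine the two key identities established earlier in the section: Proposition \ref{obs_1}, which gives $\tau_{p}(\F,C)-\tau_{p}(C)=\gsv_{p}(\F,C)$ for any reduced curve of separatrices, and Corollary \ref{gsv=milnor}, which states that in the non-dicritical case with $C$ the total union of separatrices, $\gsv_{p}(\F,C)=\mu_{p}(\F)-\mu_{p}(C)-\chi_{p}(\F)$. Since the hypotheses of both results are exactly the hypotheses of our corollary, the setup is immediate.

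First I would rewrite Proposition \ref{obs_1} in the form $\tau_{p}(\F,C) = \tau_{p}(C) + \gsv_{p}(\F,C)$ and subtract it from $\mu_{p}(\F)$ to get
\[
\mu_{p}(\F)-\tau_{p}(\F,C) = \mu_{p}(\F) - \tau_{p}(C) - \gsv_{p}(\F,C).
\]
Next I would substitute the expression for $\gsv_{p}(\F,C)$ provided by Corollary \ref{gsv=milnor}, obtaining
\[
\mu_{p}(\F)-\tau_{p}(\F,C) = \mu_{p}(\F) - \tau_{p}(C) - \bigl(\mu_{p}(\F)-\mu_{p}(C)-\chi_{p}(\F)\bigr),
\]
and the terms $\mu_{p}(\F)$ cancel, leaving $\mu_{p}(C)-\tau_{p}(C)+\chi_{p}(\F)$, which is the desired equality.

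For the second assertion I would invoke Proposition \ref{prop_chi}(2): if $\F$ is of second type then $\chi_{p}(\F)=0$, and the formula collapses to $\mu_{p}(\F)-\tau_{p}(\F,C)=\mu_{p}(C)-\tau_{p}(C)$. There is no serious obstacle here, since the corollary is essentially a bookkeeping consequence of the two preceding results; the only care needed is to verify that the hypotheses (non-dicritical, $C$ = total union of separatrices, $C$ reduced) match the hypotheses required by Proposition \ref{obs_1} and Corollary \ref{gsv=milnor}, which they do verbatim.
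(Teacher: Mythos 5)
Your proposal is correct and follows exactly the paper's own argument: the authors also deduce the corollary directly from Proposition \ref{obs_1} combined with Corollary \ref{gsv=milnor}, with the second assertion following from $\chi_{p}(\F)=0$ for second type foliations (Proposition \ref{prop_chi}). Nothing is missing.
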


\begin{proof}
It is a consequence of Proposition \ref{obs_1} and Corollary \ref{gsv=milnor}.
\end{proof}

\par Now, we characterize generalized curve foliations at $(\C^2,p)$ in terms of the Tjurina numbers. 
\begin{corollary}
Let $\F$ be a singular foliation at $(\C^2,p)$ and $\B=\B_0-\B_{\infty}$ be a reduced balanced divisor of separatrices for $\F$. 
Then $\F$ is a generalized curve if and only if 
$\tau_p(\B_0)-\tau_p(\F,\B_0)=i_p(\B_0,\B_{\infty}).$
\end{corollary}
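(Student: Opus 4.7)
The plan is to assemble this corollary directly from three results already established in the paper, so the proof is essentially a bookkeeping exercise with signs.

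First I would apply Proposition \ref{obs_1} to the reduced curve of separatrices $C=\B_0$, which immediately gives
\[
\tau_p(\F,\B_0)-\tau_p(\B_0)=\gsv_p(\F,\B_0),
\]
equivalently $\tau_p(\B_0)-\tau_p(\F,\B_0)=-\gsv_p(\F,\B_0)$. This reduces the statement to showing that $\F$ is a generalized curve foliation if and only if $\gsv_p(\F,\B_0)=-i_p(\B_0,\B_\infty)$.

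Next I would invoke Theorem \ref{mol} with the curve taken to be $\B_0$ itself. Since $\B=\B_0-\B_\infty$ is trivially adapted to $\B_0$ (the difference $\B_0-\B_0$ being the zero divisor, which is effective), and since the ``missing zero part'' $\B_0\setminus\B_0$ contributes nothing, Theorem \ref{mol} yields
\[
\gsv_p(\F,\B_0)=\Delta_p(\F,\B_0)-i_p(\B_0,\B_\infty).
\]
Hence the relation $\gsv_p(\F,\B_0)=-i_p(\B_0,\B_\infty)$ is equivalent to the vanishing $\Delta_p(\F,\B_0)=0$.

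Finally I would close the loop with the characterization of Genzmer--Mol recalled just before Proposition \ref{prop-B}: for a balanced divisor of separatrices $\B=\B_0-\B_\infty$, the foliation $\F$ is a generalized curve foliation at $p$ if and only if $\Delta_p(\F,\B_0)=0$. Chaining the three equivalences produces
\[
\F\text{ generalized curve}\iff \Delta_p(\F,\B_0)=0\iff \gsv_p(\F,\B_0)=-i_p(\B_0,\B_\infty)\iff \tau_p(\B_0)-\tau_p(\F,\B_0)=i_p(\B_0,\B_\infty),
\]
which is the required statement. There is no real obstacle: the only subtlety is handling the identification of $\B_0\setminus\B_0$ as the zero divisor so that Theorem \ref{mol} is applied correctly, and keeping track of the sign when passing between $\tau_p(\F,\B_0)-\tau_p(\B_0)$ and $\tau_p(\B_0)-\tau_p(\F,\B_0)$.
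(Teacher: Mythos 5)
Your proposal is correct and follows essentially the same route as the paper's own proof: apply Theorem \ref{mol} with $C=\B_0$ to get $\gsv_p(\F,\B_0)=\Delta_p(\F,\B_0)-i_p(\B_0,\B_\infty)$, use the Genzmer--Mol characterization $\Delta_p(\F,\B_0)=0$ of generalized curve foliations, and translate the GSV-index into Tjurina numbers via Proposition \ref{obs_1}. The only difference is the order in which the three ingredients are chained, which is immaterial.
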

\begin{proof}
It follows from \cite[Theorem A]{Genzmer-Mol} that $\F$ is a generalized curve foliation  if and only if $\Delta_p(\F,\B_0)=0$. Applying Theorem \ref{mol} to $C=\B_0$, we get
$GSV_p(\F,\B_0)=\Delta_p(\F,\B_0)-i_p(\B_0,\B_{\infty})$. Hence $\F$ is a generalized curve foliation  if and only if $GSV_p(\F,\B_0)=-i_p(\B_0,\B_{\infty})$. The proof ends, by applying Proposition \ref{obs_1} to $C=\B_0$.
\end{proof}

If ${\mathcal B}=\sum_{B}a_{B}B$ is a divisor of separatrices for $\F$ then we put 
\[
T_p(\F,\mathcal B)=\sum_{B}a_{B}\tau_p(\F,B).
\]

\par The following theorem gives a relationship between the Milnor and Tjurina numbers and the $\chi$-number, studied in Section \ref{sect:chi number}.

\begin{secondtheorem}\label{excess}
Let $\F$ be a singular foliation at $(\mathbb C^2,p)$ and let $\B=\sum_{B}a_BB$ be a balanced divisor of separatrices for $\F$. Then

\begin{eqnarray*}
\mu_p(\F)- T_p(\F,\B)&=&\sum_{B}a_B[ \mu_p(dF_B,B) -\tau_p(B)]-\deg(\B)+1+\chi_p(\F)\\
& & - \sum_{B} a_B[i_p(B,(F_B)_0\setminus B)-i_p(B,(F_B)_{\infty})], 
\end{eqnarray*}

where ${F}_B$ is a balanced divisor of separatrices for $\F$ adapted to $B$.
\end{secondtheorem}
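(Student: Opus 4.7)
The plan is to combine three previously established results: Proposition \ref{obs_1}, Theorem \ref{mol}, and Theorem \ref{Th:Delta-chi}. Specifically, each separatrix $B$ appearing in $\B$ is a reduced $\F$-invariant curve, so Proposition \ref{obs_1} applies to each $B$ individually.

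First I would rewrite $T_p(\F,\B)=\sum_B a_B\tau_p(\F,B)$ via Proposition \ref{obs_1} applied to $C=B$, obtaining
\[
T_p(\F,\B)=\sum_{B}a_B\tau_p(B)+\sum_{B}a_B\,\gsv_p(\F,B).
\]
Next, for each $B$ in the support of $\B$, I would apply Theorem \ref{mol} to the reduced curve $C=B$ together with the balanced divisor $F_B$ adapted to $B$; this yields
\[
\gsv_p(\F,B)=\Delta_p(\F,B)+i_p(B,(F_B)_0\setminus B)-i_p(B,(F_B)_\infty).
\]
Summing with weights $a_B$ and using the bilinear extension $\Delta_p(\F,\B)=\sum_B a_B\Delta_p(\F,B)$ from \eqref{END:1}, the $\gsv$-contribution becomes
\[
\sum_{B}a_B\,\gsv_p(\F,B)=\Delta_p(\F,\B)+\sum_{B}a_B\bigl[i_p(B,(F_B)_0\setminus B)-i_p(B,(F_B)_\infty)\bigr].
\]

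Finally, I would substitute the expression for $\Delta_p(\F,\B)$ supplied by Theorem \ref{Th:Delta-chi}, namely
\[
\Delta_p(\F,\B)=\mu_p(\F)-\sum_{B}a_B\mu_p(dF_B,B)+\deg(\B)-1-\chi_p(\F),
\]
into the previous identity, and then solve for $\mu_p(\F)-T_p(\F,\B)$. The terms reorganize exactly into the stated formula; no additional combinatorial identity is needed because the summations over $B$ preserve their structure.

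The only genuine obstacle is a bookkeeping check: one must verify that Proposition \ref{obs_1} (originally formulated for a reduced curve of separatrices) remains valid when applied to a single, possibly purely formal, irreducible separatrix $B$. This is ensured by the extension of the $\gsv$-index to formal branches via \eqref{eq: irred-GSV} and by Lemma \ref{eq:GSVgeneral}, both already in place in Section \ref{sect:GSV}. Once this is recorded, the proof is a direct substitution.
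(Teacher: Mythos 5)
Your proposal is correct and follows essentially the same route as the paper's own proof: express $T_p(\F,\B)$ via Proposition \ref{obs_1}, convert each $\gsv_p(\F,B)$ using Theorem \ref{mol} with the adapted divisor $F_B$, sum to get $\Delta_p(\F,\B)$, and conclude by substituting Theorem \ref{Th:Delta-chi}. The bookkeeping remark about the formal-branch validity of Proposition \ref{obs_1} is consistent with how the paper already set up the GSV-index in Section \ref{sect:GSV}.
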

\begin{proof}

By Proposition \ref{obs_1} we get 
$T_p(\F,\B)=\sum_{B} a_B (GSV_p(\F,B)+\tau_p(B)).$ Then $T_p(\F,\B)-\sum_{B}a_B \tau_p(B)= \sum_{B} a_B GSV_p(\F,B)$. 
From Theorem \ref{mol}, we have 
\begin{eqnarray*}\label{eq_6}
T_p(\F,\B)-\sum_{B}a_B \tau_p(B)&=& \sum_{B} a_B[\Delta_p(\F,B)+i_p(B,(F_B)_0\setminus B)-i_p(B,(F_{B})_{\infty})]\nonumber\\
&=& \Delta_p(\F,\B)+\sum_{B} a_B[i_p(B,(F_B)_0\setminus B)-i_p(B,(F_{B})_{\infty})],
\end{eqnarray*}
where ${F}_B$ is a balanced divisor of separatrices for $\F$ adapted to $B$. We finish the proof using Theorem \ref{Th:Delta-chi}.
\end{proof}

\par In order to illustrate Theorem \ref{excess} we present a family of dicritical foliations that are not of second type.

\begin{example}
\label{ex:Arturo}
Let $\lambda\in\C$ and $k\geq 3$ integer. Let $\F_k$ be the singular foliation at $(\C^2,0)$ defined by
\[\omega_k=y(2x^{2k-2}+2(\lambda+1)x^2y^{k-2}-y^{k-1})dx+x(y^{k-1}-(\lambda+1)x^2y^{k-2}-x^{2k-2})dy.\]
The foliation $\F_k$ is  dicritical. 
\begin{figure}[H]
\centering
			\includegraphics[width=12.5cm, angle=0]{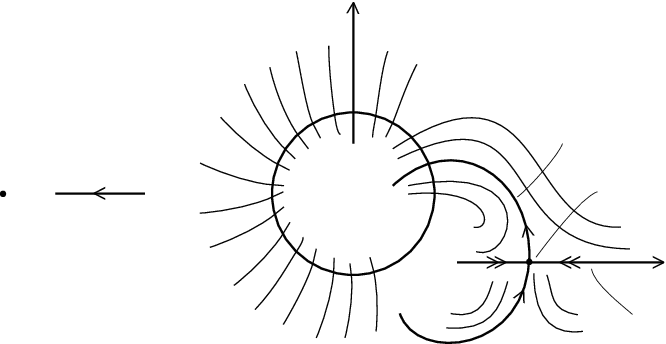}
			\put(-356.5,77){$\bullet$}
			\put(-356,69){\scriptsize {$0$}}
			\put(-160,177){\scriptsize {$y$}}
			\put(-07,32){\scriptsize {$x$}}
			\put(-55,85){\scriptsize {Saddle-node singularity}}
			\put(-40,7){\scriptsize {Strong separatrix}}
			\put(-80,78){$\vector(-1,-1){0.2}$}
			\put(-70,46.5){$\vector(-1,-1){0.2}$}
			\put(-41.9,41){$\vector(-1,1){0.2}$}
			\put(-78,109){\scriptsize {Weak separatrix}}
	\caption{Dicritical foliation $\F_k$}
	\end{figure}
After one blow-up, the foliation has a unique non-reduced singularity $q$.  A further
blow-up applied to $q$ produces a reduction of singularities of the foliation with a dicritical component and a tangent saddle-node with strong separatrix transversal to exceptional divisor. Therefore $\F_k$ is not of second type. Let $B_1:y=0$ and $B_2:x=0$, then  $\B=B_1+B_2$ is an effective balanced divisor of separatrices  for $\F_k$. 
A simple calculation leads to:
\[\nu_0(\F_k)=k,\,\,\,\nu_q(\F_k)=k-1,\,\,\,\,\xi_0(\F_k)=k-1,\,\,\,\xi_q(\F_k)=k-1,\]
thus $\chi_p(\F_k)=2(k-1)^{2}$.
Moreover $\mu_0(\F_k)=(k-2)(2k-2)+5k-4$,
$T_0(\F_k,\B)=3k-1$, $\tau_0(B_1)=\tau_0(B_2)=0$. Since $F(x,y)=xy$ defines a balanced divisor of separatrices for $\F_k$ adapted to $B_1$ and $B_2$, we get 
$\mu_0(dF,B_1)+\mu_0(dF,B_2)=2.$
Hence we have $T_0(\F_k,\B)=3k-1$
and Theorem \ref{excess} is verified. 
\end{example}

\begin{corollary}\label{cor:1}
Let $\F$ be a singular foliation at $(\mathbb C^2,p)$ and let $\B=\sum_{B}a_BB$ be a balanced divisor of separatrices for $\F$. If $\F$ is of second type, then

\begin{eqnarray*}
\mu_p(\F)- T_p(\F,\B)&=&\sum_{B}a_B[ \mu_p(dF_B,B) -\tau_p(B)]-\deg(\B)+1\\
& & - \sum_{B} a_B[i_p(B,(F_B)_0\setminus B)-i_p(B,(F_B)_{\infty})], 
\end{eqnarray*}

\noindent where ${F}_B$ is a balanced divisor of separatrices for $\F$ adapted to $B$.

\end{corollary}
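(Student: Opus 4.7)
The proof is essentially immediate from Theorem \ref{excess} combined with the vanishing of the $\chi$-number for second type foliations, so my plan is simply to invoke these two results in sequence. First I would apply Theorem \ref{excess} to the foliation $\F$ and the balanced divisor $\B=\sum_{B}a_BB$, which gives the general identity
\[
\mu_p(\F)-T_p(\F,\B)=\sum_{B}a_B[\mu_p(dF_B,B)-\tau_p(B)]-\deg(\B)+1+\chi_p(\F)-\sum_{B}a_B[i_p(B,(F_B)_0\setminus B)-i_p(B,(F_B)_\infty)].
\]

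Next I would invoke Proposition \ref{prop_chi}(2), which asserts that if $\F$ is of second type then $\chi_p(\F)=0$. Substituting this vanishing into the displayed formula eliminates the $\chi_p(\F)$ term and yields exactly the stated equality.

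Since both Theorem \ref{excess} and Proposition \ref{prop_chi} are already established earlier in the paper, there is no genuine obstacle here; the corollary is a direct specialization. The only thing to note is that the statement is independent of the particular choice of balanced divisor $F_B$ adapted to $B$: this follows because the same independence holds in Theorem \ref{excess} itself (which in turn relies on Lemma \ref{lema: db} guaranteeing that $i_p(\mathcal{P}^{dF_B},B)$, and hence the relevant quantities $\mu_p(dF_B,B)$ together with the correction terms $i_p(B,(F_B)_0\setminus B)-i_p(B,(F_B)_\infty)$, depend only on $B$ and $\F$ through the combination appearing in the formula). Thus no additional verification is required for this corollary beyond citing the two earlier results.
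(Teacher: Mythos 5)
Your proposal is correct and matches the paper's own proof exactly: the paper also deduces the corollary by citing Proposition \ref{prop_chi}(2) to get $\chi_p(\F)=0$ for second type foliations and then substituting into Theorem \ref{excess}. The extra remark about independence of the choice of $F_B$ is harmless but not needed, since that issue is already settled where Theorem \ref{excess} is established.
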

\begin{proof} 
By Lemma \ref{prop_chi} item (2),  $\chi_p(\F)=0$ and the proof follows from Theorem \ref{excess}.
\end{proof}

\par The following example shows that, in general, the reciprocal of Corollary \ref{cor:1} is not true.

\begin{example}[{\bf Dulac's foliation}]
The foliation ${\mathcal F}$ defined by the $1$-form
${\omega}=(ny+x^{n})dx-xdy$, with $n\geq 2$, admits a unique separatrix $C:$ $x=0$. Since $\nu_0(\F)=1 \not =0=\nu_0(C)-1$,  so $\F$ is not of second type. 
Moreover $T_0(\F,C)=1$, $\tau_0(C)=0$, $\mu_0(\F)=1$, $\mu_0(dx,C)=0$. Hence, $\F$ verifies the equality of Corollary \ref{cor:1} but it is not a foliation of second type at $0\in\C^2$.
\begin{figure}[H]
\centering
			\includegraphics[width=14cm, angle=0]{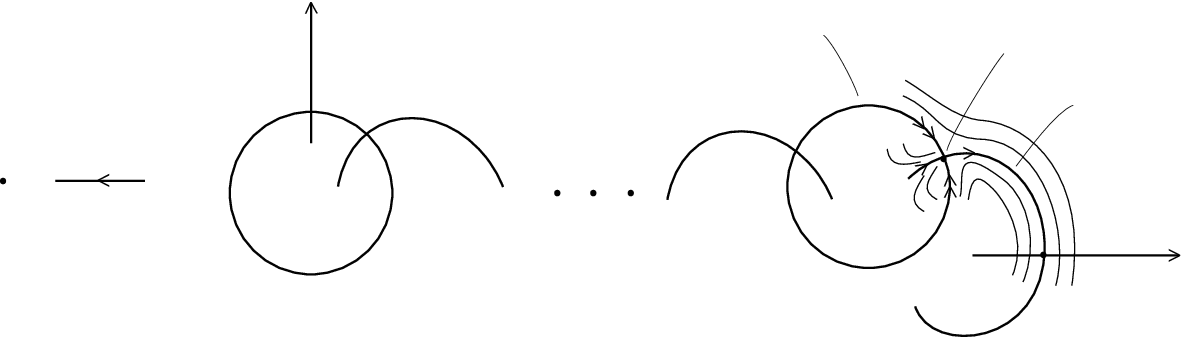}
			\put(-400.5,51){$\bullet$}
			\put(-400,41){\scriptsize {$0$}}
			\put(-292,114){\scriptsize {$y$}}
			\put(-292,100){$\longleftarrow$}
			\put(-272,101){\scriptsize {Isolated separatrix}}
			\put(-5,20){\scriptsize {$x$}}
			\put(-160,107){\scriptsize {Strong separatrix}}
			\put(-80,100){\scriptsize {Saddle-node singularity}}
			\put(-54,80){\scriptsize {Weak separatrix}}
			\put(-108.5,80){$\vector(1,-2){0.5}$}
			\put(-79.5,63.3){$\vector(-1,-2){0.5}$}
			\put(-57,57){$\vector(-1,-1){0.}$}
\caption{Dulac foliation $\F: \omega=(ny+x^{n})dx-xdy$}
\end{figure}
\end{example}

\par Now, we apply Theorem \ref{excess} to non-dicritical singular foliations at $(\C^2,p)$.
\begin{secondcorollary}\label{cor:6}
Let $\F$ be a singular foliation at $(\mathbb C^2,p)$. Assume that $\F$ is non-dicritical and $C=\cup_{j=1}^{\ell}C_j$ is the total union of separatrices of $\F$. Then
\[\mu_p(\F)-T_p(\F,C)=\mu_p(C)-\sum_{j=1}^{\ell}\tau_p(C_{j})+\chi_p(\F)-\sum_{j=1}^{\ell}i_p(C_j,C\setminus C_j).\]
\end{secondcorollary}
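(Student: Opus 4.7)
The strategy is to specialize Theorem \ref{excess} to the non-dicritical setting and then identify the terms involving the hamiltonian foliation multiplicities with the Milnor number of the reduced curve $C$.

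Since $\F$ is non-dicritical, the set $\sep_p(\F) = \{C_1, \dots, C_\ell\}$ is finite, and the total union of separatrices $C = \cup_{j=1}^\ell C_j$, thought of as a reduced divisor $\B = \sum_{j=1}^\ell C_j$, is the unique balanced divisor of separatrices for $\F$ (up to reducedness). Thus every coefficient is $a_{C_j}=1$, $\deg(\B) = \ell$, and any balanced divisor $F_B$ adapted to an individual component $B = C_j$ can be taken as the reduced equation $f(x,y) = f_1(x,y)\cdots f_\ell(x,y)$ of $C$, where $C_j:f_j(x,y)=0$. In particular $(F_B)_0 = C$ and $(F_B)_\infty = \emptyset$, so
\[
i_p(B,(F_B)_0\setminus B) = i_p(C_j, C\setminus C_j), \qquad i_p(B,(F_B)_\infty) = 0.
\]

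Plugging these data into Theorem \ref{excess} yields
\begin{eqnarray*}
\mu_p(\F) - T_p(\F,C) &=& \sum_{j=1}^\ell\bigl[\mu_p(df,C_j) - \tau_p(C_j)\bigr] - \ell + 1 + \chi_p(\F) \\
& & {}-\sum_{j=1}^\ell i_p(C_j, C\setminus C_j).
\end{eqnarray*}

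The last step is to replace $\sum_{j=1}^\ell \mu_p(df,C_j) - \ell + 1$ by $\mu_p(C)$. This is precisely the content of equation \eqref{eq_milnor} established inside the proof of Proposition \ref{Milnor formula}, namely
\[
\mu_p(C) + \ell - 1 = \sum_{j=1}^\ell \mu_p(df,C_j).
\]
Substituting this identity into the displayed equation above gives exactly
\[
\mu_p(\F) - T_p(\F,C) = \mu_p(C) - \sum_{j=1}^\ell \tau_p(C_j) + \chi_p(\F) - \sum_{j=1}^\ell i_p(C_j, C\setminus C_j),
\]
as required. No new estimate is needed; the only subtlety is recognizing that the hamiltonian term $\sum_j \mu_p(df,C_j)$ appearing in Theorem \ref{excess} is exactly what the auxiliary formula of Proposition \ref{Milnor formula} evaluates in terms of $\mu_p(C)$, and this identification is what makes the non-dicritical corollary completely explicit.
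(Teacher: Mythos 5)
Your proposal is correct and follows essentially the same route as the paper: the paper's proof likewise specializes Theorem \ref{excess} to the reduced divisor $C=\sum_{j=1}^{\ell}C_j$ with $F_B=f$ and then invokes the identity $\mu_p(C)+\ell-1=\sum_{j=1}^{\ell}\mu_p(df,C_j)$, obtained by applying Proposition \ref{Milnor and chi} to the Hamiltonian foliation $df$ (the same equation you cite from the proof of Proposition \ref{Milnor formula}). Your additional bookkeeping that all coefficients equal $1$, $\deg(\B)=\ell$, $(F_B)_0=C$ and $(F_B)_\infty$ is trivial is exactly the implicit specialization in the paper's argument.
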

\begin{proof}
By taking the effective divisor $\B=(f)$, where $f(x,y)=0$ is an equation of $C$, and applying Proposition \ref{Milnor and chi} to the foliation $df$, we get 
\begin{equation}\label{eq_mi}
\mu_p(C)=\sum_{j=1}^{\ell}\mu_p(df,C_j)-\ell+1.
\end{equation}

The proof follows  from Theorem \ref{excess}.
\end{proof}

\begin{corollary}
Let $\F$ be a singular foliation at $(\mathbb C^2,p)$. Assume that $\F$ is non-dicritical and $C=\cup_{j=1}^{\ell}C_j$ is the total union of separatrices of $\F$. Then
\[
T_{p}(\F,C)-\tau_{p}(\F,C)=\sum_{j=1}^{\ell}\tau(C_{j})-\tau(C)+2\sum_{1\leq i<j\leq \ell}i_p(C_i,C_j).
\]
\end{corollary}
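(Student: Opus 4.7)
The plan is to reduce the identity to a double application of Proposition \ref{obs_1} combined with an inductive form of the adjunction formula \eqref{adjunction} for the GSV-index.

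First, I would apply Proposition \ref{obs_1} to each individual separatrix $C_j$ and also to the whole reduced curve $C$. Summing the component equalities and subtracting the global one gives
\[
T_p(\F,C)-\tau_p(\F,C) \;=\; \left(\sum_{j=1}^{\ell}\tau_p(C_j)-\tau_p(C)\right) + \left(\sum_{j=1}^{\ell}\gsv_p(\F,C_j)-\gsv_p(\F,C)\right),
\]
so the problem is reduced to evaluating the GSV-defect $\sum_j \gsv_p(\F,C_j)-\gsv_p(\F,C)$.

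Next, I would use the adjunction formula \eqref{adjunction}, namely $\gsv_p(\F,A\cup B)=\gsv_p(\F,A)+\gsv_p(\F,B)-2i_p(A,B)$ for invariant curves without common components, and iterate it on the decomposition $C=C_1\cup\cdots\cup C_\ell$. A straightforward induction on $\ell$ yields
\[
\gsv_p(\F,C)=\sum_{j=1}^{\ell}\gsv_p(\F,C_j)-2\sum_{1\leq i<j\leq \ell}i_p(C_i,C_j),
\]
since at each step a new branch $C_k$ is added and the pairwise intersection $i_p(C_k,C_1\cup\cdots\cup C_{k-1})=\sum_{i<k}i_p(C_i,C_k)$ contributes the required double correction terms.

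Substituting this into the reduced form above, the GSV-defect is exactly $2\sum_{1\leq i<j\leq \ell}i_p(C_i,C_j)$, so
\[
T_p(\F,C)-\tau_p(\F,C)=\sum_{j=1}^{\ell}\tau_p(C_j)-\tau_p(C)+2\sum_{1\leq i<j\leq \ell}i_p(C_i,C_j),
\]
which is the desired identity. No real obstacle is expected: the only non-formal input is Proposition \ref{obs_1}, which applies in the formal setting to any reduced curve of separatrices (so in particular to each $C_j$ and to $C$), and the adjunction formula, which by the discussion preceding Lemma \ref{eq:GSVgeneral} extends from analytic invariant curves to any reduced invariant curve possibly containing purely formal branches. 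The bookkeeping in the induction is the only thing to watch, and it is completely routine.
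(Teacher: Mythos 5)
Your proof is correct. Both steps are sound in the paper's setting: Proposition \ref{obs_1} is proved there for an arbitrary reduced curve of separatrices (formal branches allowed), so it applies to each branch $C_j$ and to $C$ itself; and the adjunction identity \eqref{adjunction} is exactly the mechanism by which the paper extends the GSV-index to reducible invariant curves with possibly formal branches, so its iterated form $\gsv_p(\F,C)=\sum_{j=1}^{\ell}\gsv_p(\F,C_j)-2\sum_{1\leq i<j\leq\ell}i_p(C_i,C_j)$ (using bilinearity of intersection numbers, $i_p(C_k,C_1\cup\cdots\cup C_{k-1})=\sum_{i<k}i_p(C_i,C_k)$) holds essentially by construction, consistently with Lemma \ref{eq:GSVgeneral}.

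The route differs from the one the paper intends. The corollary is placed as a consequence of the surrounding Milnor--Tjurina results: subtracting Corollary \ref{cor:6} from Corollary B makes the terms $\mu_p(\F)$, $\mu_p(C)$ and $\chi_p(\F)$ cancel and leaves precisely the stated identity, using $\sum_{j}i_p(C_j,C\setminus C_j)=2\sum_{i<j}i_p(C_i,C_j)$. Your argument instead works directly with Proposition \ref{obs_1} and the GSV adjunction, never invoking Milnor numbers, the $\chi$-number, polar curves, or Theorems A and C. That buys two things: it is more elementary, and it shows the identity is really a statement about any reduced curve of separatrices of any (possibly dicritical) foliation, since neither non-dicriticality nor the fact that $C$ is the \emph{total} union of separatrices is used anywhere in your computation; those hypotheses only serve, in the paper's formulation, to make $T_p(\F,C)$ refer to the canonical balanced divisor. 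The paper's route, by contrast, exhibits the corollary as a cross-check of the heavier machinery. Either way the bookkeeping is as routine as you say.
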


Here is an example to illustrate Corollary \ref{cor:6}.

\begin{example} 
Let $\omega=4xydx+(y-2x^{2})dy$ be a $1$-form defining  a  singular foliation $\F$ at $(\C^2,0)$.
The unique separatrix of $\F$ is the curve $C: y=0$. Since $\nu_0(\F)=1$ and $\nu_0(C)=1$, the foliation  $\F$ is not of second type at the origin (see Proposition \ref{prop:Equa-Ba}).
Moreover, we get $T_0(\F,C)=2$, $\tau_0(C)=0$, $\mu_0(\F)=3$, $\mu_0(C)=0$, and $\chi_0(\F)=1$.
\begin{figure}[H]
\centering
			\includegraphics[width=14cm, angle=0]{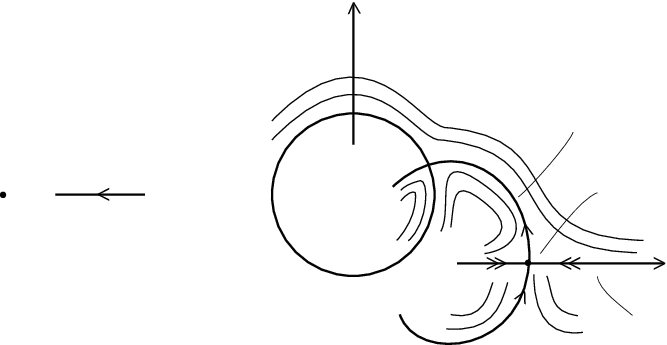}
			\put(-400,87){$\bullet$}
			\put(-399,79){\scriptsize {$0$}}
			\put(-177,198){\scriptsize {$y$}}
			\put(-10,34){\scriptsize {$x$}}
			\put(-40,8){\scriptsize {Strong separatrix}}
			\put(-65,94){\scriptsize {Saddle-node singularity}}
			\put(-80,130){\scriptsize {Weak separatrix}}
			\put(-89.7,88){$\vector(-1,-1){0.5}$}
			\put(-78,53){$\vector(-1,-1){0.5}$}
			\put(-43.6,43){$\vector(-1,2){0.5}$}
\caption{Foliation ${\F}: \omega=4xydx+(y-2x^{2})dy$}
	\end{figure}
\end{example}

\subsection{Milnor and Tjurina numbers and some residue-type indices}
\label{sect: residue}
\par We finish this section stating numerical relationships between some classic indices, such as the Baum-Bott, Camacho-Sad and variational indices,  of a singular foliation at $(\C^2,p)$ and the  Milnor and Tjurina numbers. 

Let $\F$ a singular foliation defined by a $1$-form $\omega$ as in \eqref{vectorfield}. Let $J(x,y)$ be the Jacobian matrix of $(Q(x,y),-P(x,y))$. The {\em Baum-Bott index} (see \cite{Baum}) of $\F$ at  $p$  is 
\[
\bb_{p}(\F)={\rm Res}_{p}\left\{\frac{({\rm tr}J)^{2}}{-P\cdot Q}dx\wedge dy\right\},
\]
where ${\rm tr}J$ denotes the trace of $J$.
\par The {\em Camacho-Sad index} of $\F$ (CS index) with respect to an analytic $\F$-invariant  curve $C$ is
\begin{equation}
\label{eq:CS}
\cs_{p}(\F,C)=-\frac{1}{2\pi i}\int_{\partial C}\frac{1}{h}\eta, 
\end{equation}
where $g,h$ are from \eqref{eq:suwa}. The Camacho-Sad index was introduced by these authors in \cite{CS} for a non-singular $\F$-invariant curve $C$. Later, Lins Neto in \cite{LN} and Suwa in \cite{suwa1995} generalize this index to singular  $\F$-invariant curves.
If $C$ is irreducible then equality \eqref{eq:CS} becomes
 \begin{equation}
 \label{eq: irred-CS}
 \cs_p(\F,C)=-{\rm Res}_{t=0}\left(\gamma^{*}\frac{1}{h}\eta\right),
\end{equation}
where $\gamma(t)$ is a Puiseux parametrization of $C$. By \cite[page 38]{Brunella-libro} (see also \cite{suwa}) we get the adjunction formula
\begin{equation}
\label{adjunction CS}
\cs_{p}(\F,C_1\cup C_2)=\cs_{p}(\F,C_1)+\cs_{p}(\F,C_2)+2i_p(C_1,C_2),
\end{equation}
 for any two analytic $\F$-invariant  curves,  $C_1$ and $C_2$,  without common irreducible components. The equality  \eqref{eq: irred-CS} allows us to extend the definition of the CS index to a purely formal (non-analytic) irreducible $\F$-invariant  curve; and the equality \eqref{adjunction CS} allows us to extend the definition of the CS index to $\F$-invariant curves containing purely formal branches.
\par In a neighborhood of a non-singular point of the foliation $\F$, there is a $1$-form $\alpha$ such that $d\omega=\alpha \wedge \omega$. If $\alpha'$ is other such $1$-form, then $\alpha$ and $\alpha'$  coincide over every leaf of $\F$. Hence, in a neighborhood of $0$ (away $0$) there exists a holomorphic multi-valued $1$-form $\alpha$ such that $d\omega=\alpha \wedge \omega$ and that its restriction to each leaf of $\F$ is single-valued. We say that $\alpha$ is an \textit{exponent form} for $\omega$. The \textit{variational index} or \textit{variation} of $\F$ relative to $C$ at $p$ is 
\[
\var_p(\F,C)={\rm Res}_{t=0}\left(\alpha_{\big|_{C}}\right)= \frac{1}{2\pi i}\int_{\partial C}\alpha.\]
The variational index was introduced in \cite{variation}. It is additive:
\[
\var_p(\F,C_1\cup C_2)=\var_p(\F,C_1)+\var_p(\F,C_2),
\]
where $C_1$ and $C_2$ are $\F$-invariant curves without common factors. For any divisor $\B=\sum_B a_B B$ of separatrices for $\F$ we put
\[
\var_p(\F,\B)=\sum_B a_B \var_p(\F,B).
\]
For any analytic $\F$-invariant curve $C$ we have, after \cite[Proposition 5]{index},
\begin{equation}\label{var_1}
\var_p(\F,C)=\cs_p(\F,C)+GSV_p(\F,C).
\end{equation}

%
\begin{proposition}\label{cor:12}
Let $\F$ be a singular foliation at $(\mathbb C^2,p)$ and let $\B=\sum_{B}a_BB$ be a balanced divisor of separatrices for $\F$. Then
\[\bb_p(\F)=\var_p(\F,\B)+\mu_p(\F)-\sum_{B}a_B\mu_p(dF_{B},B)+\deg(\B)-1-\chi_p(\F)+\sum_{q\in\cl{I}_{p}(\F)}\xi^{2}_q(\F),\]
where ${F}_B$ is a balanced divisor of separatrices for $\F$ adapted to $B$.

Moreover, if $\F$ is non-dicritical and $C$ is the total union of separatrices of $\F$ then
\begin{equation*}
\bb_p(\F)=\cs_p(\F,C)+\tau_p(\F,C)-\tau_p(C)+\mu_p(\F)-\mu_p(C)-\chi_p(\F)+\sum_{q\in\cl{I}_{p}(\F)}\xi^{2}_q(\F).
\end{equation*}
\end{proposition}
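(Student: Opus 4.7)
The plan is to first establish the intermediate identity
\[
\bb_p(\F) = \var_p(\F, \B) + \Delta_p(\F, \B) + \sum_{q \in \cl{I}_p(\F)} \xi_q^2(\F) \qquad (\star)
\]
and then derive the first formula of the proposition by substituting the expression for $\Delta_p(\F, \B)$ given by Theorem \ref{Th:Delta-chi}. I would prove $(\star)$ by induction on the length of the reduction process of $\F$ at $p$. In the base case $\F$ is reduced at $p$; then $\Delta_p(\F, \B) = 0$ and $\xi_p(\F) = 0$, and $(\star)$ collapses to $\bb_p(\F) = \var_p(\F, \B)$. This is verified by direct residue computation in each Seidenberg case: at a non-degenerate simple singularity with eigenvalues $\lambda_1, \lambda_2$ one finds $\bb_p(\F) = (\lambda_1+\lambda_2)^2/(\lambda_1\lambda_2)$, while the adjunction formula \eqref{adjunction CS} together with $\gsv_p(\F, \B) = 0$ at simple singularities gives $\var_p(\F, \B) = \cs_p(\F, \B) = \lambda_2/\lambda_1 + \lambda_1/\lambda_2 + 2$; at a saddle-node in normal form \eqref{saddle-node-formal} the analogous computation yields $\bb_p(\F) = 2k+2+\lambda$, and using Lemma \ref{eq:GSVgeneral} branch by branch together with \eqref{adjunction CS} gives $\cs_p(\F, \B) + \gsv_p(\F, \B) = (\lambda+2) + k$.

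For the inductive step, let $\sigma$ be the blow-up at $p$, let $\tilde\F$ be the strict transform, and let $q_1, \ldots, q_s$ be the singular points of $\tilde\F$ on the exceptional divisor $D$. Each term of $(\star)$ admits a transformation formula under $\sigma$: the Baum-Bott index by its classical blow-up behaviour (see \cite[Chapter~3]{Brunella-libro}); the variational index via its additivity together with the known blow-up rules for $\cs_p$ and $\gsv_p$; the polar excess $\Delta_p(\F, \B)$ via Theorem \ref{Th:Delta-chi} combined with the Milnor-number transformation rule and \eqref{tang_index}; and $\sum_q \xi_q^2(\F)$ by splitting off $\xi_p(\F)^2$ from the sum. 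Summing contributions at the $q_i$, the identity at $p$ reduces to the instances of $(\star)$ at the $q_i$, which hold by the induction hypothesis. The key bookkeeping is that the $\nu_p(\F)^2$- and $(\nu_p(\F)+1)^2$-type corrections produced by $\bb_p$ and $\cs_p$ combine, via Proposition \ref{prop:Equa-Ba} in the form $\nu_p(\F) = \nu_p(\B) - 1 + \xi_p(\F)$, to yield exactly the $\xi_p(\F)^2$ contribution.

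For the second (non-dicritical) part of the proposition I would take $\B = C$, so that $\B_\infty = 0$ and $a_B = 1$ for every branch of $C$. Applying Proposition \ref{Milnor and chi} to the hamiltonian foliation $df$, which is of second type so $\chi_p(df) = 0$, gives $\sum_{j=1}^{\ell} \mu_p(df, C_j) = \mu_p(C) + \ell - 1$. Substituting this into the first formula of the proposition, using \eqref{var_1} to rewrite $\var_p(\F, C) = \cs_p(\F, C) + \gsv_p(\F, C)$, and invoking Proposition \ref{obs_1} in the form $\gsv_p(\F, C) = \tau_p(\F, C) - \tau_p(C)$, yields the stated equality.

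The principal obstacle will be the inductive verification of $(\star)$: organising the blow-up transformation rules of the four quantities so that they combine into a single $\xi_p(\F)^2$ correction. The quadratic character of this correction has to match the quadratic-in-$\nu_p$ terms appearing in the classical blow-up formulae for $\bb_p$ and $\cs_p$, and the cancellation closes using Proposition \ref{prop:Equa-Ba}; a careful case split between dicritical and non-dicritical blow-ups is needed to make the argument rigorous.
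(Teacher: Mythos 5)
Your overall architecture is the paper's: the identity $(\star)$ is exactly the relation the paper invokes, and your deduction of the first formula from it via Theorem \ref{Th:Delta-chi}, as well as of the non-dicritical statement via \eqref{var_1}, Proposition \ref{obs_1} and $\mu_p(C)=\sum_{j}\mu_p(df,C_j)-\ell+1$ (Proposition \ref{Milnor and chi} applied to $df$), coincides with the paper's proof. The crucial difference is that the paper does not prove $(\star)$ at all: it is quoted as \cite[Theorem 5.2]{Fernandez}. Your plan to re-derive $(\star)$ by induction on the reduction process is therefore the entire burden of your proof, and as written it has a genuine error plus an unexecuted core.

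The error is in the base case: you assert that $\Delta_p(\F,\B)=0$ at every reduced singularity, so that $(\star)$ collapses to $\bb_p(\F)=\var_p(\F,\B)$. This is false at a saddle-node. For the normal form \eqref{saddle-node-formal} with $\B=(x)+(y)$ one has $i_p(\mathcal{P}^{\F},(y))=k+1$ while $i_p(\mathcal{P}^{d(xy)},(y))=1$, so $\Delta_p(\F,\B)=k\neq 0$; equivalently, $\gsv_p(\F,\B)=k$ and $\Delta_p=\gsv_p$ for an effective balanced divisor by Theorem \ref{mol}, and $\Delta_p(\F,\B_0)=0$ characterizes generalized curve foliations, which a saddle-node is not. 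Your own residue computations exhibit the discrepancy: you find $\bb_p(\F)=2k+2+\lambda$ but $\cs_p(\F,\B)+\gsv_p(\F,\B)=\lambda+2+k$, which differ by exactly $k$, so the claimed collapse $\bb_p=\var_p$ is internally inconsistent with your own numbers (it is repaired precisely by retaining the term $\Delta_p(\F,\B)=k$). Beyond that, the inductive step --- the blow-up transformation rules for $\bb_p$, $\var_p$, $\Delta_p$ and the quadratic $\xi^2$ correction, with the dicritical/non-dicritical case split and the cancellation via Proposition \ref{prop:Equa-Ba} --- is only sketched; this is the substance of the cited theorem of Fern\'andez-P\'erez and Mol and cannot be treated as routine bookkeeping. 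Either carry out that induction in full (with the corrected base case) or cite \cite[Theorem 5.2]{Fernandez} as the paper does; with that citation the remainder of your argument is correct and identical to the paper's.
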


\begin{proof}
Suppose that $\F$ is a singular foliation. By \cite[Theorem 5.2]{Fernandez} we get
\[\bb_p(\F)=\var_p(\F,\B)+\Delta_p(\F,\B)+\sum_{q\in\mathcal{I}_{p}(\F)}\xi^{2}_q(\F),\]
where the summation runs over all infinitely near points of $\F$ at $p$.
 The proof of the first equality follows applying Theorem \ref{Th:Delta-chi} to $\Delta_p(\F,\B)$. 

The proof of the non-dicritical case follows from the first part of the proposition (for $\B=C$ and $F_{C_i}=f$, for any $1\leq i\leq \ell$, where $f(x,y)=0$ is an equation of $C$), equality  \eqref{var_1},  Proposition \ref{obs_1} and equality \eqref{eq_mi}.
\end{proof}

\section{Bound for the Milnor sum of an algebraic curve}
\label{sect: bound}

Let $\F$ be a holomorphic foliation on the complex projective plane $\mathbb{P}^2$. The degree of $\F$ is the number of tangencies between $\F$ and a generic line. It is well-known that $\mathcal F$ has a least one singular point an there is a lot of
activity around the foliations of $\mathbb{P}^2$ having a unique singularity (for instance \cite{Cerveau-Deserti} and \cite{Castorena}).
 Let $C$ be an algebraic $\F$-invariant curve in $\mathbb{P}^2$. We say that $C$ is \textit{non-dicritical} if every singular point of $\F$ on $C$ is non-dicritical.

\par The classification of holomorphic foliations on $\mathbb{P}^2$ of degree $d$
has been of great interest in Algebraic Geometry (see for example the recent paper \cite{Castorena}). It is therefore interesting to obtain properties of the foliations in the projective plane.

\par As an application of our previous results, we have the following theorem:
\begin{theorem}\label{proyectivo}
Let $\F$ be a holomorphic foliation on $\mathbb{P}^2$ of degree $d$ leaving invariant a non-dicritical algebraic curve $C$ of degree $d_0$ such that for each $p\in\sing(\F)\cap C$, all local branches of $\sep_p(\F)$ are contained in $C$. Then
\[
\sum_{p\in C}\mu_p(C)= \left(\sum_{p\in \sing(\F)\cap C}[\mu_p(\F)-\chi_p(\F)]\right)+d^2_0-(d+2)d_0.
\]
In particular, 
\[
\sum_{p\in C}\mu_p(C)\leq \left(\sum_{p\in \sing(\F)\cap C}\mu_p(\F)\right)+d^2_0-(d+2)d_0.\]
Moreover, if $\sing(\F)\subset C$, then 
\[
\sum_{p\in C}\mu_p(C)\leq d^2-d(d_0-1)+(d_0-1)^2.
\]
\end{theorem}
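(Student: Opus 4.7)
The strategy combines the local identity of Corollary \ref{gsv=milnor} with two classical global formulas on $\mathbb{P}^2$. First I would observe that $\sum_{p \in C} \mu_p(C) = \sum_{p \in \sing(\F) \cap C} \mu_p(C)$: indeed, at any point $p \in C$ where $\F$ is regular, the invariant curve $C$ is locally a smooth leaf of $\F$, so $\mu_p(C) = 0$.

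Next, fix $p \in \sing(\F) \cap C$. Because $C$ is non-dicritical, $\F$ is non-dicritical at $p$, and since by hypothesis every branch of $\sep_p(\F)$ is contained in $C$, the curve $C$ coincides locally with the total union of separatrices of $\F$ at $p$. Hence Corollary \ref{gsv=milnor} applies and, combined with the non-negativity of the $\chi$-number (Proposition \ref{prop_chi}), gives
\[
\mu_p(C) \;=\; \mu_p(\F) - \gsv_p(\F, C) - \chi_p(\F) \;\leq\; \mu_p(\F) - \gsv_p(\F, C).
\]
Summing this pointwise inequality over $p \in \sing(\F) \cap C$ yields
\[
\sum_{p \in C} \mu_p(C) \;\leq\; \sum_{p \in \sing(\F) \cap C} \mu_p(\F) \;-\; \sum_{p \in \sing(\F) \cap C} \gsv_p(\F, C).
\]

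The key projective ingredient is the global identity
\[
\sum_{p \in \sing(\F) \cap C} \gsv_p(\F, C) \;=\; c_1(N_\F)\cdot C \;-\; C\cdot C \;=\; (d+2)d_0 - d_0^2,
\]
which is obtained by combining Suwa's formula $\sum_p \var_p(\F,C) = c_1(N_\F)\cdot C$ with the Camacho--Sad theorem $\sum_p \cs_p(\F,C) = C\cdot C$ and the local decomposition $\var = \cs + \gsv$ from \eqref{var_1}. Substituting this into the previous estimate proves the first inequality.

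For the \emph{moreover} part, the assumption $\sing(\F) \subseteq C$ replaces the partial sum by the total Milnor sum $\sum_{p \in \sing(\F)} \mu_p(\F) = d^2 + d + 1$, which is the classical Darboux-type count on $\mathbb{P}^2$ (equivalently, the second Chern number $c_2(\Omega^1_{\mathbb{P}^2}(d+2))$). A direct algebraic manipulation, expanding $d^2+d+1 + d_0^2 - (d+2)d_0$ as $d^2 - d(d_0-1) + (d_0-1)^2$, yields the stated bound. The principal obstacle is the verification of the global GSV identity, which is the only step that genuinely leaves the local framework developed in the earlier sections and requires the compactness of the projective ambient.
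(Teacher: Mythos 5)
Your proposal is correct and follows essentially the same route as the paper: the pointwise identity of Corollary \ref{gsv=milnor} together with $\chi_p(\F)\geq 0$ (Proposition \ref{prop_chi}), then the global sum $\sum_{p}\gsv_p(\F,C)=(d+2)d_0-d_0^2$, and finally $\sum_{p\in\sing(\F)}\mu_p(\F)=d^2+d+1$ for the second bound. The only cosmetic differences are that you make explicit the vanishing of $\mu_p(C)$ at regular points of $\F$ and re-derive the global GSV identity from the variation and Camacho--Sad theorems, whereas the paper simply cites it from Brunella.
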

\begin{proof}
Fix an arbitrary point $p\in\sing(\F)\cap C$. Then, by Corollary \ref{gsv=milnor}, we have 
\begin{eqnarray*}\label{eq_10}
\mu_p(C)=\mu_p(\F)-\chi_p(\F)-GSV_p(\F,C).
\end{eqnarray*} 
Since for each $p\in\sing(\F)\cap C$, all local branches of $\sep_p(\F)$ are contained in $C$, we get
equality $\displaystyle\sum_{p\in\sing(\F)\cap C}GSV_p(\F,C)=(d+2)d_0-d_0^2$ (cf. \cite[Proposition 4]{index}), and this implies 
\[
\sum_{p\in C}\mu_p(C)= \left(\sum_{p\in \sing(\F)\cap C}[\mu_p(\F)-\chi_p(\F)]\right)+d^2_0-(d+2)d_0.
\]
Note that, in particular, we get the inequality 
\[
\sum_{p\in C}\mu_p(C)\leq \left(\sum_{p\in \sing(\F)\cap C}\mu_p(\F)\right)+d^2_0-(d+2)d_0,
\]
since $\chi_p(\F)\geq 0$  by Proposition \ref{prop_chi} item (1).
On the other hand, if $\sing(\F)\subset C$, we use the equality $\displaystyle\sum_{p\in \sing(\F)}\mu_p(\F)=d^2+d+1$ (cf. \cite[page 28]{Brunella-libro}) to finish the proof.
\end{proof}

\begin{remark}
Observe that the hypothesis {\it all local branches of $\sep_p(\F)$ and all singularities of $\F$ are contained in $C$} in Theorem \ref{proyectivo} implies, by \cite[Proposition 6.1]{Fernandez}, that $\deg C=\deg \F +2$ and $\F$ is a logarithmic foliation.
\end{remark}

\noindent {\bf Acknowledgment} \\
The first-named author thanks Universidad de La Laguna for the hospitality during his visit. The authors thank the anonymous referee, whose remarks allowed them to improve the presentation. \\

\noindent{\bf Funding} The first author acknowledges support from CNPq Projeto Universal 408687/2023-1 "Geometria das Equa\c{c}\~oes Diferenciais Alg\'ebricas" and CNPq-Brazil PQ-306011/2023-9. The first-named and third-named authors were partially supported by the Pontificia Universidad Cat\'{o}lica del Per\'{u} project DFI 2024-PI1100. The second-named author  was supported by the grant PID2019-105896GB-I00 funded by MCIN/AEI/10.13039/501100011033.\\

\noindent{\bf Declarations}

\noindent{\bf Conflict of interest} The authors declare no Conflict of interest.\\




\end{document}